\date{}
\newtheorem{df}{Definition}[section]
\newtheorem{pro}{Proposition}[section]
\newtheorem{thm}{Theorem}[section]
\newtheorem{cor}{Corollary}[section]
\newtheorem{ex}{Example}[section]
\newtheorem{lem}{Lemma}[section]
\newtheorem{rem}{Remark}[section]
\renewcommand\section{\@startsection {section}{1}{\z@}
{-30pt \@plus -1ex \@minus -.2ex} {2.3ex \@plus.2ex}
{\normalfont\normalsize\bfseries}}
\renewcommand\subsection{\@startsection{subsection}{2}{\z@}
{-3.25ex\@plus -1ex \@minus -.2ex} {1.5ex \@plus .2ex}
{\normalfont\normalsize\bfseries}}
\renewcommand{\@seccntformat}[1]{\csname the#1\endcsname. }
\begin{document}

\title{A note on lower nil $M$-Armendariz rings}
\author{\small{Sushma Singh and Om Prakash} \\ \small{Department of Mathematics} \\ \small{Indian Institute of Technology Patna, Bihta} \\ \small{Patna, INDIA- 801 106} \\  \small{sushmasingh@iitp.ac.in \& \;om@iitp.ac.in }}

\maketitle
\begin{abstract} In this article, we prove some results for lower nil $M$-Armendariz ring. Let $M$ be a strictly totally ordered monoid and $I$ be a semicommutative ideal of $R$. If $\frac{R}{I}$ is a lower nil $M$-Armendariz ring, then $R$ is lower nil $M$-Armendariz. Similarly, for above $M$, if $I$ is 2-primal with $N_{*}(R) \subseteq I$ and $R/I$ is $M$-Armendariz, then $R$ is a lower nil $M$-Armendariz ring. Further, we observe that if $M$ is a monoid and $N$ a u.p.-monoid where $R$ is a $2$-primal $M$-Armendariz ring, then $R[N]$ is a lower nil $M$-Armendariz ring.
\end{abstract}
\textbf{Mathematics Subject Classification:} 16S36; 16N60; 16Y99.\\
\textbf{Keywords:} Armendariz ring, $2$-primal ring, semicommutative ring, skew triangular matrix ring, u.p.-monoid, $M$-Armendariz ring, lower nil $M$-Armendariz ring.

\section{Introduction}
Throughout this article, $R$ denotes an associative ring with identity unless otherwise stated. For a ring $R$, $N_{*}(R)$, $N^{*}(R)$ and $N(R)$ denote the prime radical (lower nilradical), upper nilradical and the set of nilpotent elements of $R$ respectively. It is known that $N_{*}(R) \subseteq N^{*}(R) \subseteq N(R)$. Here, $R[x]$ denotes the polynomial ring with an indeterminate $x$ over $R$ and $C_{f(x)}$ for the set of all coefficients of $f(x) \in R[x]$. $M_{n}(R)$ and $T_{n}(R)$ represent the full matrix ring and upper triangular matrix ring of order $n$ over the ring $R$ respectively.\\
A ring $R$ is said to be Armendariz ring if whenever two polynomials, $f(x)g(x) = 0$ implies $ab = 0$ for each $a \in C_{f(x)}$, $b \in C_{g(x)}$. $R$ is reduced if it has no nonzero nilpotent element. In 1974, Armendariz himself proved that every reduced ring is satisfying above condition \cite{E}. Later, the term Armendariz ring coined by Rege and Chhawchharia \cite{MM} in 1997. Currently, several generalizations of Armendariz rings have been introduced.\\

A ring $R$ is said to be semicommutative if whenever $ab = 0$ implies $aRb = 0$ for $a, b \in R$ and it is 2-primal if $N_{*}(R) = N(R)$.  In 2001, Marks called a ring $R$ is $NI$ if $N^{*}(R) = N(R)$ \cite{GGG}.\\
 Let $M$ be a monoid and $e$ the identity element of $M$. $R[M]$ denotes the monoid ring of $M$ over ring $R$. In 2005, Liu \cite{Z}, defined $M$-Armendariz ring as if whenever elements $\alpha = a_{1}g_{1}+a_{2}g_{2}+\ldots+a_{n}g_{n}$, $\beta = b_{1}h_{1}+b_{2}h_{2}+\ldots+b_{m}h_{m} \in R[M]$ satisfy $\alpha \beta = 0$, then $a_{i}b_{j} = 0$ for each $i$ and $j$. Clearly, every ring is considered as $M$-Armendariz ring where $M = \{e\}$. Moreover, if $S$ is a semigroup with multiplication $st = 0$ for all $s, t \in S$ and $M = S^{1}$, the semigroup $S$ with identity, then ring is not an $M$-Armendariz. If $M = N \cup\{0\}$, then $R$ is $M$-Armendariz if and only if $R$ is Armendariz ring. Also, a monoid $M$ with $|M| \geq 2$ and $R$ is $M$-Armendariz, he proved that $R$ is a p.p.-ring if and only if $R[M]$ is a p.p.-ring.\\

In 2014, Alhevaz and Moussavi \cite{AA}, called a ring $R$ to be a nil $M$-Armendariz if  $\alpha = a_{1}g_{1}+a_{2}g_{2}+\ldots+a_{n}g_{n}$, $\beta = b_{1}h_{1}+b_{2}h_{2}+\ldots+b_{m}h_{m} \in R[M]$ such that $\alpha \beta \in N(R)[M]$, then $a_{i}b_{j} \in N(R)$ for each $i$ and $j$. Later, in 2016, Alhevaz and Hashemi \cite{A} introduced the concept of upper and lower nil $M$-Armendariz ring relative to a monoid. A ring $R$ is upper $(respectively,~ lower)$ nil $M$-Armendariz ring if whenever elements $\alpha = a_{1}g_{1}+a_{2}g_{2}+\ldots+a_{n}g_{n}$, $\beta = b_{1}h_{1}+b_{2}h_{2}+\ldots+b_{m}h_{m} \in R[M]$ such that $\alpha \beta \in N^{*}(R)[M]$ $( respectively~ N_{*}(R)[M] )$, then $a_{i}b_{j} \in N^{*}(R)$ $( respectively ~a_{i}b_{j} \in N_{*}(R))$ for each $i$ and $j$.

\section{Results on Lower nil M-Armendariz Ring}
Recall that a monoid $M$ is said to be u.p. monoid (unique product monoid) if for any two non empty finite subsets $A, B$ of $M$, there exists an element $g \in M$ uniquely presented in the form of $ab$ where $a \in A$ and $b \in B$. The class of unique product monoids is quite large and important. For example, the right or left ordered monoids, torsion free nilpotent groups, submonoids of a free group. unique product groups and u.p. monoids providing zero divisor problem for group ring. The ring theoretical property of unique product monoid has been established by many authors \cite{J, Z, JJ, D} in past few decades.
\begin{pro}\label{pro1} For a unique product monoid $M$, every $2$-primal ring is lower nil $M$-Armendariz ring.
\end{pro}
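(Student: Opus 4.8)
The plan is to pass to the quotient by the prime radical, where the ring becomes reduced, and then to appeal to the classical fact of Liu \cite{Z} that a reduced ring is $M$-Armendariz whenever $M$ is a u.p.-monoid.

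First I would verify that $\overline{R} := R/N_{*}(R)$ is reduced. Write $N = N_{*}(R)$. Since $R$ is $2$-primal we have $N_{*}(R) = N(R)$. If $\overline{a}^{\,k} = \overline{0}$ in $\overline{R}$ for some $k$, then $a^{k} \in N(R)$, so $a$ is itself nilpotent, whence $a \in N(R) = N_{*}(R) = N$ and $\overline{a} = \overline{0}$; thus $\overline{R}$ has no nonzero nilpotent element.

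Next, let $\pi : R \to \overline{R}$ be the canonical projection and extend it coefficientwise to a ring homomorphism $\Pi : R[M] \to \overline{R}[M]$, whose kernel is exactly $N[M]$. Now suppose $\alpha = a_{1}g_{1} + \cdots + a_{n}g_{n}$ and $\beta = b_{1}h_{1} + \cdots + b_{m}h_{m}$ satisfy $\alpha\beta \in N_{*}(R)[M] = N[M]$. Applying $\Pi$ gives $\Pi(\alpha)\Pi(\beta) = \Pi(\alpha\beta) = 0$ in $\overline{R}[M]$, where $\Pi(\alpha) = \sum_{i}\overline{a_{i}}\,g_{i}$ and $\Pi(\beta) = \sum_{j}\overline{b_{j}}\,h_{j}$. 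Since $\overline{R}$ is reduced and $M$ is a u.p.-monoid, $\overline{R}$ is $M$-Armendariz, so $\overline{a_{i}}\,\overline{b_{j}} = \overline{0}$ for all $i,j$; that is, $a_{i}b_{j} \in N = N_{*}(R)$ for every $i$ and $j$. This is precisely the defining condition for $R$ to be lower nil $M$-Armendariz.

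The substantive step, and the only place where the u.p.-monoid hypothesis is actually consumed, is that the reduced ring $\overline{R}$ is $M$-Armendariz. If one prefers a self-contained argument to citing \cite{Z}, the main obstacle is to establish this directly: assuming $\overline{\alpha}\,\overline{\beta} = 0$ with supports $A = \{g_{i}\}$ and $B = \{h_{j}\}$, one uses the unique-product property to select the element of $AB$ admitting a unique representation $g_{i_{0}}h_{j_{0}}$, which forces $\overline{a_{i_{0}}}\,\overline{b_{j_{0}}} = 0$; reducedness (via $xy = 0 \Rightarrow yx = 0$ and $xRy = 0$) then propagates this vanishing to the remaining coefficient products through an induction on $|A| + |B|$. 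Everything else is routine bookkeeping.
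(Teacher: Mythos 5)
Your proposal is correct and follows essentially the same route as the paper: reduce modulo $N_{*}(R)$, observe that $2$-primality makes the quotient reduced, and invoke Liu's result that reduced rings are $M$-Armendariz over a u.p.-monoid. In fact you are slightly more careful than the paper, which asserts that $R/N_{*}(R)$ is reduced without explicitly noting that this is where the $2$-primal hypothesis is used (for a general ring the quotient by the prime radical is only semiprime).
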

\begin{proof} Let $M$ be a unique product monoid. Let $\alpha = a_{1}g_{1}+a_{2}g_{2}+\ldots+a_{n}g_{n}$, $\beta = b_{1}h_{1}+b_{2}h_{2}+\ldots+b_{m}h_{m} \in R[M]$ such that $\alpha \beta \in N_{*}(R)[M]$. Then $\overline{\alpha}\overline{\beta} = \overline{0}$ in $R/N_{*}(R)[M]$. Since $R/N_{*}(R)$ is reduced, therefore, by Proposition $(1.1)$ of \cite{Z}, $R/N_{*}(R)$ is $M$-Armendariz ring. This implies $a_{i}b_{j} \in N_{*}(R)$ for each $i$ and $j$. Thus, $R$ is a lower nil $M$-Armendariz ring.
\end{proof}
\begin{cor}\label{cor1} For any unique product monoid $M$, semicommutative ring is lower nil $M$-Armendariz ring.
\end{cor}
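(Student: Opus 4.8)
The plan is to reduce the statement to Proposition \ref{pro1} by establishing the single structural fact that every semicommutative ring is $2$-primal; once that is in hand the corollary is immediate, since Proposition \ref{pro1} already shows that every $2$-primal ring is lower nil $M$-Armendariz whenever $M$ is a unique product monoid.

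So the real work is to prove that a semicommutative ring $R$ satisfies $N(R) = N_{*}(R)$. As the inclusion $N_{*}(R) \subseteq N(R)$ holds in every ring, it suffices to show $N(R) \subseteq N_{*}(R)$, i.e.\ that every nilpotent element lies in the prime radical. First I would record the standard consequence of semicommutativity: if $a_{1}a_{2}\cdots a_{k} = 0$, then $a_{1}Ra_{2}R\cdots Ra_{k} = 0$. This follows by induction on $k$ from the defining implication $ab = 0 \Rightarrow aRb = 0$: putting $b = a_{2}\cdots a_{k}$ gives $a_{1}rb = 0$ for all $r \in R$, and reapplying the hypothesis to the shorter products $(a_{1}ra_{2})a_{3}\cdots a_{k} = 0$ peels off one factor at a time.

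Next I would invoke Levitzki's characterization of the prime radical: $a \in N_{*}(R)$ if and only if $a$ is strongly nilpotent, meaning every sequence with $a_{0} = a$ and $a_{t+1} \in a_{t}Ra_{t}$ is eventually zero. Given $a \in N(R)$ with $a^{n} = 0$, the displayed consequence yields that any product of $n$ copies of $a$ interleaved with arbitrary ring elements vanishes. For such a strong-nilpotence sequence, each $a_{t}$ is precisely an interleaved product of $2^{t}$ copies of $a$, so $a_{t} = 0$ as soon as $2^{t} \geq n$. Hence $a$ is strongly nilpotent, giving $a \in N_{*}(R)$ and therefore $R$ is $2$-primal.

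Finally, applying Proposition \ref{pro1} to the $2$-primal ring $R$ and the unique product monoid $M$ gives that $R$ is lower nil $M$-Armendariz, as desired. I expect the only delicate point to be the second step: one must resist pushing semicommutativity through the quotient $R/N_{*}(R)$ (which need not remain semicommutative) and instead argue directly via strong nilpotence, where the bookkeeping that an $n$-fold interleaved product of $a$ is forced to vanish is the crux.
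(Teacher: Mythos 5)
Your proposal is correct and follows exactly the route the paper intends: the corollary is stated as an immediate consequence of Proposition \ref{pro1} via the standard fact that every semicommutative ring is $2$-primal, which the paper leaves implicit and you prove in full (correctly, via the interleaved-product lemma and Levitzki's strong-nilpotence characterization of $N_{*}(R)$). No gaps.
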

A monoid $M$ equipped with an order $\leq$ is said to be an ordered monoid if for any $r_{1}, r_{2}, s \in M$, $r_{1}\leq r_{2}$ implies $r_{1}s \leq r_{2}s$ and $sr_{1}\leq sr_{2}$. Moreover, if $r_{1}< r_{2}$ implies $r_{1}s<r_{2}s$ and $sr_{1}<sr_{2}$, then $M$ is said to be strictly totally ordered monoid.\\
Since each strictly totally ordered monoid is a u.p. monoid, hence by Proposition \ref{pro1}, we have the following result.
\begin{cor} Let $M$ be a strictly totally ordered monoid. Then $2$-primal rings are lower nil $M$-Armendariz rings.
\end{cor}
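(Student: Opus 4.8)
The plan is to obtain this corollary as a direct specialization of Proposition \ref{pro1}, so the only substantive work is to verify that every strictly totally ordered monoid is a unique product monoid; once that is in hand, the hypothesis of Proposition \ref{pro1} is satisfied and the conclusion is immediate. This is precisely the reduction flagged in the sentence preceding the statement.

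To establish the u.p. property, I would take two nonempty finite subsets $A, B \subseteq M$ and exhibit an element of $M$ admitting a unique factorization $ab$ with $a \in A$ and $b \in B$. Since $\leq$ is a total order and $A, B$ are finite, both possess greatest elements; set $a_{0} = \max A$ and $b_{0} = \max B$. I claim $a_{0}b_{0}$ is the required element. Indeed, for any pair $(a,b) \in A \times B$ with $(a,b) \neq (a_{0}, b_{0})$, at least one of $a < a_{0}$ or $b < b_{0}$ holds. If $a < a_{0}$, then strict right-compatibility of the order gives $ab < a_{0}b$, while $b \leq b_{0}$ yields $a_{0}b \leq a_{0}b_{0}$, so $ab < a_{0}b_{0}$; the case $b < b_{0}$ is symmetric using strict left-compatibility. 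Hence $a_{0}b_{0}$ strictly dominates every other product, so it cannot be written as $ab$ in any second way, which is exactly the u.p. condition.

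With this fact in place, the proof concludes by invoking Proposition \ref{pro1}: since $M$ is a u.p. monoid and $R$ is $2$-primal, $R$ is lower nil $M$-Armendariz. The only point requiring any care is the uniqueness half of the u.p. argument, and I expect this to be the main (though modest) obstacle. It is forced by the \emph{strictness} of the order, and it is exactly where the hypothesis ``strictly totally ordered'' rather than merely ``totally ordered'' is used, since without strict compatibility the maximal product $a_{0}b_{0}$ need not be uniquely represented.
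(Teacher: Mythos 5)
Your proposal is correct and follows exactly the paper's route: the paper likewise deduces the corollary from Proposition \ref{pro1} after noting that every strictly totally ordered monoid is a u.p.\ monoid. The only difference is that you supply the (correct) max-element argument for the u.p.\ property, which the paper merely asserts without proof.
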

The following example shows that the condition u.p. monoid in Proposition \ref{pro1} is not superfluous.
\begin{ex} Let $M = \{0, I, E_{11}, E_{12}, E_{21}, E_{22}\}$, where $E_{ij}$ is the unit matrix of $M_{2}(\mathbb{Z})$ for each $1 \leq i, j \leq 2$. Then $M$ is a monoid but it is not a u.p. monoid. Let $\alpha = 1.E_{22}$ and $\beta = 1.E_{11}-1.E_{12}$. Then $\alpha\beta = 0 \in N_{*}(\mathbb{Z})[M]$ but $1.1 \notin N_{*}(\mathbb{Z})$. Hence, $\mathbb{Z}$ is a not a lower nil $M$-Armendariz ring.
\end{ex}
\begin{pro} Let $M$ be a u.p. monoid, $R$ be a lower nil $M$-Armendariz ring and $S$, a subring of $R$.
\begin{itemize}
\item[$(1)$] If $N_{*}(S)\subseteq N_{*}(R)$, then $S$ is a lower nil $M$-Armendariz ring.
\item[$(2)$] If $R$ is an NI ring, then $S$ is a lower nil $M$-Armendariz ring.
\end{itemize}
\end{pro}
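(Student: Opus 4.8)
The plan is to treat both parts uniformly by relating the lower nilradical of the subring $S$ to that of $R$, so that the lower nil $M$-Armendariz property of $R$ can be restricted to $S$. The basic tool, available with no extra hypothesis, is the inclusion $N_{*}(R)\cap S\subseteq N_{*}(S)$: if $a\in N_{*}(R)\cap S$ then $a$ is strongly nilpotent in $R$, and since every sequence $a_{0}=a,\ a_{k+1}\in a_{k}Sa_{k}$ is in particular a sequence with $a_{k+1}\in a_{k}Ra_{k}$ (because $a_{k}Sa_{k}\subseteq a_{k}Ra_{k}$), it must terminate; hence $a$ is strongly nilpotent in $S$, i.e.\ $a\in N_{*}(S)$. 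This automatic inclusion is exactly what lets me push a conclusion obtained in $R$ back into $S$.

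For part $(1)$ I would argue directly. Let $\alpha=a_{1}g_{1}+\cdots+a_{n}g_{n}$ and $\beta=b_{1}h_{1}+\cdots+b_{m}h_{m}$ lie in $S[M]$ with $\alpha\beta\in N_{*}(S)[M]$. By the hypothesis $N_{*}(S)\subseteq N_{*}(R)$ we get $\alpha\beta\in N_{*}(R)[M]$, while $\alpha,\beta\in R[M]$. Since $R$ is lower nil $M$-Armendariz, $a_{i}b_{j}\in N_{*}(R)$ for all $i,j$. But each $a_{i}b_{j}\in S$, so $a_{i}b_{j}\in N_{*}(R)\cap S\subseteq N_{*}(S)$ by the tool above, and $S$ is lower nil $M$-Armendariz. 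In effect the hypothesis of $(1)$ upgrades the automatic inclusion to the equality $N_{*}(S)=N_{*}(R)\cap S$.

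For part $(2)$ the plan is to reduce to part $(1)$ by deriving its hypothesis $N_{*}(S)\subseteq N_{*}(R)$ from the assumption that $R$ is $NI$. First I would record the preliminary facts that $N(S)=N(R)\cap S$ always holds, and that when $R$ is $NI$ the set $N(R)=N^{*}(R)$ is an ideal, so $N(S)=N(R)\cap S$ is an ideal of $S$ contained in $N(R)$; consequently $S$ is again $NI$ and $N_{*}(S)\subseteq N(S)=N^{*}(R)\cap S$. It then remains to pass from membership in $N^{*}(R)$ down into $N_{*}(R)$, after which part $(1)$ finishes the proof.

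The main obstacle is precisely this last containment $N_{*}(S)\subseteq N_{*}(R)$, that is, moving from the set of nilpotent elements into the prime radical after restricting to a subring. The automatic inclusion runs the wrong way, and strong nilpotence in $S$ does not by itself control the $R$-sequences $s_{k+1}\in s_{k}Rs_{k}$; the $NI$ condition controls these only up to nilpotence inside the nil ideal $N(R)$. I would therefore concentrate the work here, using that each $s\in N_{*}(S)$ is strongly nilpotent in $S$ together with the ideal structure of $N(R)=N^{*}(R)$ to force the $R$-sequences starting at $s$ to terminate. Note that this step is immediate when $R$ is $2$-primal, since then $N_{*}(R)=N(R)$ and the chain above already lands in $N_{*}(R)$; the real content of $(2)$ is to secure the same conclusion under the weaker $NI$ hypothesis.
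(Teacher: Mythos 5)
Your part $(1)$ is correct and is essentially the paper's own argument: the automatic inclusion $N_{*}(R)\cap S\subseteq N_{*}(S)$ (every $S$-sequence $s_{k+1}\in s_{k}Ss_{k}$ is in particular an $R$-sequence) upgrades the hypothesis to $N_{*}(S)=N_{*}(R)\cap S$, and the defining condition then transfers directly from $R$ to $S$.

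Part $(2)$, however, contains a genuine gap, and the route you sketch for closing it cannot work. You correctly reduce the problem to the containment $N_{*}(S)\subseteq N_{*}(R)$, but you then only announce that you ``would concentrate the work here,'' proposing to force the $R$-sequences starting at $s\in N_{*}(S)$ to terminate using the NI structure of $R$ alone. No argument is supplied, and none can be: NI by itself does not give $N_{*}(S)\subseteq N_{*}(R)$ for subrings. The paper's own Example 2.2 refutes that implication --- the direct limit $R$ of the rings $U_{2^{n}}$ over a reduced ring is NI and semiprime, so $N_{*}(R)=0$, yet it contains the $2\times 2$ upper triangular matrix ring as a unital subring, whose prime radical (the strictly upper triangular part) is nonzero. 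What you are overlooking is that the hypothesis of $(2)$ is not merely that $R$ is NI: $R$ is also assumed to be lower nil $M$-Armendariz, and by Lemma 3.1(c) of \cite{A} (the fact this paper invokes both here and in its Example 2.2) a lower nil $M$-Armendariz ring satisfies $N_{*}(R)=N^{*}(R)$. Combined with NI this yields $N_{*}(R)=N^{*}(R)=N(R)$, i.e.\ $R$ is $2$-primal --- exactly the case you yourself describe as ``immediate,'' since then $N_{*}(S)\subseteq N(S)\subseteq N(R)=N_{*}(R)$ and part $(1)$ finishes the proof. So the missing idea is not a harder strong-nilpotence argument under a weaker hypothesis; it is the observation that the two hypotheses together collapse $(2)$ to the $2$-primal case, which is precisely how the paper argues.
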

\begin{proof}
\begin{itemize}
\item[$(1)$] Let $\alpha \beta \in N_{*}(S)[M]$, where $\alpha, \beta \in R[M]$. Then by assumption, $\alpha \beta \in N_{*}(R)[M]$. Since $R$ is a lower nil $M$-Armendariz ring, therefore $a_{i}b_{j} \in N_{*}(R)$ for each $i, j$. Since $N_{*}(S)\subseteq N_{*}(R)$, so $N_{*}(S) = S\cap N_{*}(R)$ and hence $a_{i}b_{j} \in N_{*}(S)$ for each $i, j$.
\item[$(2)$] Since $R$ is NI, $N^{*}(R) = N(R)$. Also, for lower nil $M$-Armendariz ring, we have $N_{*}(R) = N^{*}(R)$. Therefore, $N^{*}(R) = N_{*}(R) = N(R)$. Thus, $S$ is a lower nil $M$-Armendariz ring.
\end{itemize}
\end{proof}

\begin{pro}\label{pro6} For a u.p. monoid $M$, every lower nil $M$-Armendariz ring is nil $M$-Armendariz ring.
\end{pro}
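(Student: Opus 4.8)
The plan is to descend to the quotient $\overline{R}=R/N_{*}(R)$, where the lower nil $M$-Armendariz property becomes an honest $M$-Armendariz property, and then to pull the ordinary nil $M$-Armendariz property back up to $R$. The engine of the argument is the observation that $R$ is lower nil $M$-Armendariz if and only if $\overline{R}$ is $M$-Armendariz: for $\alpha,\beta\in R[M]$ one has $\alpha\beta\in N_{*}(R)[M]$ exactly when $\overline{\alpha}\,\overline{\beta}=\overline{0}$ in $\overline{R}[M]$, and $a_{i}b_{j}\in N_{*}(R)$ exactly when $\overline{a_{i}}\,\overline{b_{j}}=\overline{0}$. Reading this in the forward direction, i.e. lifting an arbitrary relation $\overline{\alpha}\,\overline{\beta}=\overline{0}$ to representatives with the same supports and applying the hypothesis, shows that $\overline{R}$ is $M$-Armendariz.

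First I would record the two nilpotency bookkeeping facts that make both the descent and the ascent legitimate. Since $N_{*}(R)\subseteq N(R)$, an element $x\in R$ is nilpotent if and only if its image $\overline{x}$ is nilpotent in $\overline{R}$; consequently $N(\overline{R})$ is exactly the image of $N(R)$, and the full preimage of $N(\overline{R})$ under the canonical surjection $\pi\colon R\to\overline{R}$ is $N(R)$. In particular $\overline{R}$ is semiprime, being the quotient of $R$ by its prime radical.

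Next I would invoke that every $M$-Armendariz ring is nil $M$-Armendariz, the monoid analogue of Antoine's theorem available from \cite{AA}; applying it to $\overline{R}$ shows $\overline{R}$ is nil $M$-Armendariz. Now take $\alpha,\beta\in R[M]$ with $\alpha\beta\in N(R)[M]$. Every coefficient of $\alpha\beta$ lies in $N(R)$, so every coefficient of $\overline{\alpha}\,\overline{\beta}$ lies in $N(\overline{R})$, that is $\overline{\alpha}\,\overline{\beta}\in N(\overline{R})[M]$. The nil $M$-Armendariz property of $\overline{R}$ then gives $\overline{a_{i}}\,\overline{b_{j}}\in N(\overline{R})$ for all $i,j$, whence $a_{i}b_{j}\in\pi^{-1}(N(\overline{R}))=N(R)$. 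This is precisely the statement that $R$ is nil $M$-Armendariz.

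The main obstacle is the auxiliary input used in the last step, namely that an $M$-Armendariz ring (here the semiprime ring $\overline{R}$) is automatically nil $M$-Armendariz. If one declines to cite it, this is where the real work sits: one must run a peeling argument on the supports of $\alpha$ and $\beta$, extracting a uniquely represented product $g_{u}h_{v}$ guaranteed by the u.p. hypothesis so that the corresponding coefficient of $\alpha\beta$ is the single product $a_{u}b_{v}$, and then controlling the remaining nilpotent coefficient products under the $M$-Armendariz relation in $\overline{R}$. Everything else is routine manipulation of the prime radical; the only delicate point is to choose the lifts of $\overline{\alpha},\overline{\beta}$ with the same supports, so that the coefficient indexing is preserved when passing between $R[M]$ and $\overline{R}[M]$.
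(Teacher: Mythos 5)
Your proof is correct and follows essentially the same route as the paper: both descend to $\overline{R}=R/N_{*}(R)$, where the lower nil $M$-Armendariz hypothesis becomes the statement that $\overline{R}$ is $M$-Armendariz, invoke the fact that over a u.p.\ monoid an $M$-Armendariz ring is nil $M$-Armendariz (the paper cites Theorem 2.23 of \cite{EE} for this, you cite the analogue from \cite{AA}), and then pull the conclusion back to $R$ using $\pi^{-1}(N(\overline{R}))=N(R)$. The only difference is presentational: you make explicit the lifting of supports and the nilpotence bookkeeping that the paper delegates entirely to Lemma 3.1(c) of \cite{A} and the cited theorem.
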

\begin{proof} We know that $R$ is a lower nil $M$-Armendariz ring if and only if $R/N_{*}(R)$ is an $M$-Armendariz ring. Then by using Lemma $(3.1(c))$ of \cite{A} and Theorem $(2.23)$ of \cite{EE}, we obtain the result.

\end{proof}
But converse is not true. In this regard, we have an example.
\begin{ex} Let $M$ be a u.p. monoid and we consider the ring given in Example 1.2 of \cite{S}. Let $S$ be a reduced ring, $n$ a positive integer and $R_{n} = U_{2^{n}}(S)$. Then each $R_{n}$ is an NI ring by [\cite{S}, Proposition 4.1(1)]. Define a map $\sigma : R_{n}\rightarrow R_{n+1}$ by $A\mapsto \left(
                                                                                                            \begin{array}{cc}
                                                                                                              A & 0 \\
                                                                                                              0 & A \\
                                                                                                            \end{array}
                                                                                                          \right)$,
then $R_{n}$ can be considered as subring of $R_{n+1}$ via $\sigma$ $(i.e., A = \sigma (A) ~for~ A \in R_{n} )$. Notice that $D = \{R_{n}, \sigma_{nm}\}$, with $\sigma_{nm} = \sigma^{m-n}$ whenever $n \leq m$, is a direct system over $I = \{1, 2, \ldots\}$. Set $R = \underrightarrow {Lim}~ R_{n}$ be the direct limit of $D$. Then $R = \bigcup_{n = 1}^{\infty}R_{n}$, and $R$ is NI by [\cite{S}, Proposition 1.1]. By Theorem $(2.2)$ of \cite{AA}, NI ring is nil $M$-Armendariz ring. By [\cite{YYY}, Theorem 2.2(1)], $R$ is a semiprime ring, hence $N_{*}(R) = 0$. Here $N^{*}(R) = \{m = (m_{ij})\in R ~|~ m_{ii} = 0 ~for ~all ~i\}$. Thus, by Lemma $(3.1(c))$ of \cite{A}, $R$ is not a lower nil $M$-Armendariz ring.
\end{ex}

\begin{pro} Let $M$ be a monoid with $|M|\geq 2$. Then the following condition are equivalent:
\begin{itemize}
\item[$(1)$] $R$ is a lower nil $M$-Armendariz ring.
\item[$(2)$] $T_{n}(R)$ is a lower nil $M$-Armendariz ring.
\end{itemize}
\end{pro}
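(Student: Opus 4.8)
The plan is to reduce the statement to the characterization recalled in the proof of Proposition \ref{pro6}: a ring $A$ is lower nil $M$-Armendariz if and only if $A/N_{*}(A)$ is $M$-Armendariz (Lemma $(3.1(c))$ of \cite{A}). Granting this, the whole problem becomes one of identifying the ring $T_{n}(R)/N_{*}(T_{n}(R))$ and relating its $M$-Armendariz property to that of $R/N_{*}(R)$. The restriction $|M|\geq 2$ plays no essential role in the argument, since for $M=\{e\}$ both conditions hold trivially; it is simply carried along to exclude the degenerate monoid.

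First I would compute the prime radical of $T_{n}(R)$. Let $K$ be the ideal of strictly upper triangular matrices. Since $K^{n}=0$, $K$ is a nilpotent ideal and therefore $K\subseteq N_{*}(T_{n}(R))$. The diagonal projection $T_{n}(R)\to R\times\cdots\times R$ ($n$ copies) is a surjective ring homomorphism with kernel $K$, so $T_{n}(R)/K\cong R^{n}$. Because $K\subseteq N_{*}(T_{n}(R))$, every prime ideal of $T_{n}(R)$ contains $K$, whence $N_{*}(T_{n}(R))/K=N_{*}(T_{n}(R)/K)=N_{*}(R^{n})=N_{*}(R)^{n}$. Pulling back along the projection gives $N_{*}(T_{n}(R))=\{(a_{ij})\in T_{n}(R): a_{ii}\in N_{*}(R)\text{ for all }i\}$. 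Consequently, the composite map $T_{n}(R)\to (R/N_{*}(R))^{n}$ sending a matrix to its diagonal modulo $N_{*}(R)$ is onto with kernel exactly $N_{*}(T_{n}(R))$, so that $T_{n}(R)/N_{*}(T_{n}(R))\cong (R/N_{*}(R))^{n}$.

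The remaining ingredient is the elementary fact that a finite direct product $S_{1}\times\cdots\times S_{n}$ is $M$-Armendariz if and only if each factor $S_{k}$ is, which follows from the natural isomorphism $(S_{1}\times\cdots\times S_{n})[M]\cong S_{1}[M]\times\cdots\times S_{n}[M]$: a product of two elements vanishes precisely coordinatewise, and the $M$-Armendariz condition is tested coordinatewise on the coefficients. For the converse direction one takes a relation $\alpha_{k}\beta_{k}=0$ in $S_{k}[M]$ and views $\alpha_{k},\beta_{k}$ as elements of the product by padding the other coordinates with $0$ (note this uses that $S_{k}$ is a direct factor, not merely a quotient, since $M$-Armendariz is not inherited by arbitrary quotients). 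Applying this with $S_{k}=R/N_{*}(R)$ shows that $(R/N_{*}(R))^{n}$ is $M$-Armendariz if and only if $R/N_{*}(R)$ is.

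Chaining the equivalences then finishes the proof: $T_{n}(R)$ is lower nil $M$-Armendariz $\iff$ $T_{n}(R)/N_{*}(T_{n}(R))\cong(R/N_{*}(R))^{n}$ is $M$-Armendariz $\iff$ $R/N_{*}(R)$ is $M$-Armendariz $\iff$ $R$ is lower nil $M$-Armendariz, which is exactly $(1)\Leftrightarrow(2)$. I expect the main obstacle to be the rigorous determination of $N_{*}(T_{n}(R))$, and in particular the step $N_{*}(T_{n}(R)/K)=N_{*}(T_{n}(R))/K$ justified by $K$ lying inside the prime radical; everything downstream is formal once the isomorphism $T_{n}(R)/N_{*}(T_{n}(R))\cong(R/N_{*}(R))^{n}$ is in hand.
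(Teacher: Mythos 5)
Your proof is correct, but it is organized differently from the paper's. The paper argues directly from the definition: it transports $\alpha\beta\in N_{*}(T_{n}(R))[M]$ through the isomorphism $T_{n}(R)[M]\cong T_{n}(R[M])$, reads off from $N_{*}(T_{n}(R))=\{(m_{ij}):m_{ii}\in N_{*}(R)\}$ that each diagonal gives a relation $(\sum_{i}a_{pp}^{i}g_{i})(\sum_{j}b_{pp}^{j}h_{j})\in N_{*}(R)[M]$, applies the hypothesis on $R$ entrywise, and for the converse simply embeds $R$ as the subring of constant upper triangular matrices (a step left essentially unjustified with the word ``clearly''). You instead route everything through the characterization already used in the proof of Proposition \ref{pro6}, namely that $A$ is lower nil $M$-Armendariz if and only if $A/N_{*}(A)$ is $M$-Armendariz, combined with the isomorphism $T_{n}(R)/N_{*}(T_{n}(R))\cong(R/N_{*}(R))^{n}$ and the closure of the $M$-Armendariz property under finite direct products and factors. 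Both arguments ultimately rest on the same computation of $N_{*}(T_{n}(R))$ (which you justify more carefully, via the nilpotent ideal $K$ of strictly upper triangular matrices and $N_{*}(A/K)=N_{*}(A)/K$ for $K\subseteq N_{*}(A)$), but your reduction buys a uniform treatment of both implications and in particular makes the $(2)\Rightarrow(1)$ direction rigorous, where the paper's subring argument would still require checking that the prime radical of the constant-matrix subring is the contraction of $N_{*}(T_{n}(R))$. The price is a dependence on the quotient characterization from Lemma $(3.1(c))$ of \cite{A} and on the product lemma, both of which are standard and which you state and use correctly, including the observation that the converse of the product lemma needs $S_{k}$ to be a direct factor rather than a mere quotient.
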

\begin{proof} $\textbf{(1)}\Rightarrow \textbf{(2)}:$ It is easy to see that there exists an isomorphism between rings $T_{n}(R)[M]$ and $T_{n}(R[M])$, defined by\\

$\sum_{i=1}^{n} \left(
           \begin{array}{cccc}
             a_{11}^{i} & a_{12}^{i} & \ldots & a_{1n}^{i} \\
             0 & a_{22}^{i} & \ldots & a_{nn}^{i} \\
             \vdots & \vdots & \ddots & \vdots \\
             0 & 0 & \ldots & a_{nn}^{i} \\
           \end{array}
         \right)g_{i}\mapsto \left(
           \begin{array}{cccc}
             \sum_{i=1}^{n} a_{11}^{i}g_{i} & \sum_{i=1}^{n} a_{12}^{i}g_{i} & \ldots & \sum_{i=1}^{n}a_{1n}^{i}g_{i} \\
             0 & \sum_{i=1}^{n}a_{22}^{i}g_{i} & \ldots & \sum_{i=1}^{n}a_{nn}^{i}g_{i} \\
             \vdots & \vdots & \ddots & \vdots \\
             0 & 0 & \ldots & \sum_{i=1}^{n}a_{nn}^{i}g_{i} \\
           \end{array}
         \right).$\\

Let $\alpha = A_{1}g_{1}+A_{2}g_{2}+\ldots+A_{m}g_{m}$ and $\beta = B_{1}h_{1}+B_{2}h_{2}+\ldots+B_{n}h_{n} \in T_{n}(R)[M]$ such that $\alpha \beta \in N_{*}(T_{n}(R))[M]$, where\\

  $A_{i} = \left(
           \begin{array}{cccc}
             a_{11}^{i} & a_{12}^{i} & \ldots & a_{1n}^{i} \\
             0 & a_{22}^{i} & \ldots & a_{nn}^{i} \\
             \vdots & \vdots & \ddots & \vdots \\
             0 & 0 & \ldots & a_{nn}^{i} \\
           \end{array}
         \right)$~,~~~
         $B_{j} = \left(
           \begin{array}{cccc}
             b_{11}^{j} & b_{12}^{j} & \ldots & b_{1n}^{j} \\
             0 & b_{22}^{j} & \ldots & b_{nn}^{i} \\
             \vdots & \vdots & \ddots & \vdots \\
             0 & 0 & \ldots & b_{nn}^{j} \\
           \end{array}
         \right) \in T_{n}(R)$.\\

We have $N_{*}(T_{n}(R)) = \{m = (m_{ij}) \in T_{n}(R) ~|~ m_{ii} \in N_{*}(R)\}$. Therefore, from $\alpha \beta \in N_{*}(T_{n}(R))[M]$, we get $(\sum _{i=0}^{r}a_{pp}^{i}g_{i})(\sum_{j=0}^{s}b_{pp}^{j}h_{j}) \in N_{*}(R)[M]$, for $p = 1, 2\ldots n$. Since $R$ is a lower nil $M$-Armendariz ring, therefore $a_{pp}^{i}b_{pp}^{j} \in N_{*}(R)$, for each $p$ and all $i$, $j$ and hence $A_{i}B_{j} \in N_{*}(T_{n}(R))$ for each $i, j$. Thus, $T_{n}(R)$ is a lower nil M-Armendariz ring.\\

$\textbf{(2)}\Rightarrow\textbf{(1)}:$ Note that $R$ is isomorphic to $\left\{\left(
           \begin{array}{cccc}
             a & a & \ldots & a \\
             0 & a & \ldots & a \\
             \vdots & \vdots & \ddots & \vdots \\
             0 & 0 & \ldots & a \\
           \end{array}
         \right): a \in R \right\},$ subring of $T_{n}(R)$. Clearly, $R$ is a lower nil $M$-Armendariz ring.
\end{proof}
Now, below example makes it clear that full matrix ring $M_{2}(R)$ over a ring $R$ is not a lower nil $M$-Armendariz ring.
\begin{ex}
Let $M$ be a monoid with $|M|\geq 2$ and $R$ is a ring. Take $e \neq g \in M$ and let $\alpha = \left(
                                                                                               \begin{array}{cc}
                                                                                                 0 & 1 \\
                                                                                                 0 & 0 \\
                                                                                               \end{array}
                                                                                             \right)e -\left(
                                                                                                         \begin{array}{cc}
                                                                                                           1 & 0 \\
                                                                                                           0 & 0 \\
                                                                                                         \end{array}
                                                                                                       \right)g$ and
  $\beta = \left(
             \begin{array}{cc}
               1 & 1 \\
               0 & 0 \\
             \end{array}
           \right)e + \left(
                        \begin{array}{cc}
                          0 & 0 \\
                          1 & 1 \\
                        \end{array}
                      \right)g$. Then $\alpha\beta \in N_{*}(M_{2}(R))[M]$ but $\left(
                                                                                  \begin{array}{cc}
                                                                                    1 & 1 \\
                                                                                    0 & 0 \\
                                                                                  \end{array}
                                                                                \right) \notin N_{*}(M_{2}(R))$. Therefore, $M_{2}(R)$ is not a lower nil $M$-Armendariz ring.
\end{ex}
\begin{pro} Let $N$ be an ideal of a cancellative monoid $M$. If $R$ is a lower nil $N$-Armendariz ring, then $R$ is a lower nil $M$-Armendariz ring.
\end{pro}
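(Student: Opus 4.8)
The plan is to reduce the $M$-Armendariz condition to the $N$-Armendariz hypothesis by a one-sided shifting argument that uses the ideal property of $N$ to keep everything inside $R[N]$, and the cancellativity of $M$ to preserve the coefficient bookkeeping. First I would fix an element $w \in N$, which exists since an ideal is nonempty. Given $\alpha = \sum_{i} a_{i}g_{i}$ and $\beta = \sum_{j} b_{j}h_{j}$ in $R[M]$ with $\alpha\beta \in N_{*}(R)[M]$, I would form the shifted elements
\[
\alpha_{w} = \sum_{i} a_{i}(w g_{i}), \qquad \beta_{w} = \sum_{j} b_{j}(h_{j} w).
\]
Note the asymmetry is deliberate: I multiply $w$ on the \emph{left} of the first factor and on the \emph{right} of the second. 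Because $N$ is a two-sided ideal, $w g_{i} \in N$ and $h_{j} w \in N$ for all $i,j$, so $\alpha_{w}, \beta_{w} \in R[N]$; and since $w$ is left- and right-cancellable, the elements $w g_{i}$ are pairwise distinct and the $h_{j} w$ are pairwise distinct, so no coefficients are accidentally merged and $\alpha_{w}, \beta_{w}$ genuinely carry the coefficients $a_{i}, b_{j}$.

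The second step is to compute the product and match coefficients. By associativity, $\alpha_{w}\beta_{w} = \sum_{i,j} a_{i}b_{j}\,(w\, g_{i}h_{j}\, w)$, where each exponent $w g_{i}h_{j} w$ lies in $N$. The crucial point, and the place where cancellativity does the work, is that for two index pairs $(i,j)$ and $(i',j')$ one has $w g_{i}h_{j} w = w g_{i'}h_{j'} w$ if and only if $g_{i}h_{j} = g_{i'}h_{j'}$: the forward direction cancels the outer $w$'s on the left and on the right, which is exactly why the asymmetric shift (rather than a symmetric conjugation $w g_{i} w$, which would leave an uncancellable $w^{2}$ in the middle) is needed. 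Hence the partition of the index set $\{(i,j)\}$ into fibres of equal exponent is identical to the one governing $\alpha\beta$, so the coefficient of $w g w$ in $\alpha_{w}\beta_{w}$ equals the coefficient of $g$ in $\alpha\beta$ for each relevant $g$. These all lie in $N_{*}(R)$ by hypothesis, whence $\alpha_{w}\beta_{w} \in N_{*}(R)[N]$.

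Finally, since $R$ is lower nil $N$-Armendariz and $\alpha_{w}, \beta_{w} \in R[N]$ satisfy $\alpha_{w}\beta_{w} \in N_{*}(R)[N]$, I conclude that $a_{i}b_{j} \in N_{*}(R)$ for all $i,j$, which is precisely the requirement for $R$ to be lower nil $M$-Armendariz. Here $N_{*}(R)$ is intrinsic to $R$ and does not depend on which monoid is used, so no compatibility issue arises when passing between the $M$- and $N$-settings.

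I expect the main obstacle to be the coefficient-matching step of the second paragraph: one must verify carefully that collecting like terms in $\alpha_{w}\beta_{w}$ reproduces exactly the same $R$-linear sums $\sum_{g_{i}h_{j}=g} a_{i}b_{j}$ that appear in $\alpha\beta$, with neither spurious collisions nor splitting of a fibre. This is guaranteed entirely by left and right cancellation in $M$, so the correctness of the argument hinges on flagging precisely where each cancellation is invoked; the ideal property, by contrast, plays only the supporting role of ensuring $w g_{i}, h_{j} w \in N$ so that the shifted elements live in $R[N]$.
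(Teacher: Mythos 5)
Your argument is exactly the paper's proof: fix an element of the ideal $N$, shift the exponents of $\alpha$ on the left and those of $\beta$ on the right so that both elements land in $R[N]$, use cancellativity to see that no exponents collide or split, and then invoke the lower nil $N$-Armendariz hypothesis. You spell out the coefficient-matching step more carefully than the paper does, but the approach is the same and is correct.
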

\begin{proof} Suppose $\alpha = a_{1}g_{1}+a_{2}g_{2}+\ldots+a_{n}g_{n}$, $\beta = b_{1}h_{1}+b_{2}h_{2}+\ldots+b_{m}h_{m} \in R[M]$ such that $\alpha \beta \in N_{*}(R)[M]$. Take $k \in N$, then $kg_{1}, kg_{2},\ldots kg_{n}, h_{1}k, h_{2}k, \ldots h_{n}k \in N$ and $kg_{i} \neq kg_{j}$, $h_{i}k \neq h_{j}k$ when $i \neq j$. Now,
\begin{equation}
(\sum_{i = 1}^{n}a_{i}kg_{i})(\sum_{j = 1}^{m}b_{j}h_{j}k) \in N_{*}(R)[N].
\end{equation}
Since $R$ is a lower nil $N$-Armendariz ring, therefore $a_{i}b_{j} \in N_{*}(R)$ for each $i, j$. Thus, $R$ is a lower nil $M$-Armendariz ring.
\end{proof}
\begin{df} Let $R$ be a ring and $S^{-1}R = \{u^{-1}a ~|~ u \in S, a \in R\}$ with $S$ a multiplicative closed subset of $R$ consisting of central regular elements. Then $S^{-1}R$ is a ring.
\end{df}
\begin{pro} Let $M$ be a monoid and $S$ be a multiplicative monoid in a ring $R$ consisting of central regular elements. Then $R$ is a lower nil $M$-Armendariz ring if and only if so is $S^{-1}R$.
\end{pro}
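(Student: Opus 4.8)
The whole proposition rests on one localization identity for the lower nilradical: for a multiplicatively closed set $S$ of central regular elements one has $N_*(S^{-1}R)=S^{-1}N_*(R)$, and in particular $N_*(S^{-1}R)\cap R=N_*(R)$ (the contraction makes sense because the elements of $S$ are regular, so $R\hookrightarrow S^{-1}R$). Granting this identity, the proof is just a matter of clearing denominators and exploiting that every element of $S$ is central and becomes a unit in $S^{-1}R$. I will treat the two implications separately.

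For the forward implication, assume $R$ is lower nil $M$-Armendariz and take $\alpha=\sum_{i=1}^{n}(u_i^{-1}a_i)g_i$ and $\beta=\sum_{j=1}^{m}(v_j^{-1}b_j)h_j$ in $(S^{-1}R)[M]$ with $\alpha\beta\in N_*(S^{-1}R)[M]$. Put $u=u_1\cdots u_n$ and $v=v_1\cdots v_m$ in $S$; since $S$ is central, each $u_i^{-1}a_i=u^{-1}(\hat u_i a_i)$ with $\hat u_i=u u_i^{-1}\in S\subseteq R$, so that $\alpha=u^{-1}\alpha'$ and $\beta=v^{-1}\beta'$ for genuine elements $\alpha'=\sum_i(\hat u_i a_i)g_i$ and $\beta'=\sum_j(\hat v_j b_j)h_j$ of $R[M]$. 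Centrality gives $\alpha\beta=(uv)^{-1}\alpha'\beta'$, and because $uv$ is a central unit, the hypothesis forces every coefficient of $\alpha'\beta'$ into $N_*(S^{-1}R)\cap R=N_*(R)$; thus $\alpha'\beta'\in N_*(R)[M]$. Applying the lower nil $M$-Armendariz property of $R$ to $\alpha',\beta'\in R[M]$ yields $\hat u_i a_i\,\hat v_j b_j=\hat u_i\hat v_j\,a_i b_j\in N_*(R)$ for all $i,j$. Since $\hat u_i\hat v_j$ is a central unit of $S^{-1}R$, it can be cancelled inside the ideal $N_*(S^{-1}R)$, whence $a_ib_j\in N_*(S^{-1}R)$ and finally $(u_i^{-1}a_i)(v_j^{-1}b_j)=(u_iv_j)^{-1}a_ib_j\in N_*(S^{-1}R)$, as required.

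The converse is quicker. Assume $S^{-1}R$ is lower nil $M$-Armendariz and let $\alpha,\beta\in R[M]$ satisfy $\alpha\beta\in N_*(R)[M]$. Viewing $\alpha,\beta$ in $(S^{-1}R)[M]$ through the embedding and using $N_*(R)\subseteq N_*(S^{-1}R)$, we get $\alpha\beta\in N_*(S^{-1}R)[M]$, so the hypothesis gives $a_ib_j\in N_*(S^{-1}R)$ for all $i,j$. As $a_ib_j\in R$, the contraction identity $N_*(S^{-1}R)\cap R=N_*(R)$ returns $a_ib_j\in N_*(R)$, and $R$ is lower nil $M$-Armendariz.

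The only substantive point, and the main obstacle, is the radical identity $N_*(S^{-1}R)=S^{-1}N_*(R)$; everything else is bookkeeping with central units. I would establish it through the order-preserving bijection between $\mathrm{Spec}(S^{-1}R)$ and the primes of $R$ disjoint from $S$ (valid for a central localization at regular elements), which gives $N_*(S^{-1}R)\cap R=\bigcap_{P\cap S=\emptyset}P$. The delicate step is the equality $\bigcap_{P\cap S=\emptyset}P=N_*(R)$, for which it suffices to know that every minimal prime of $R$ is disjoint from $S$, i.e. that a central regular element lies in no minimal prime. This is the noncommutative analogue of the classical fact that elements of a minimal prime are zero-divisors, and it is exactly where both regularity and centrality of $S$ are used. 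Once this contraction identity (together with the trivial inclusion $N_*(R)\subseteq N_*(S^{-1}R)$) is in hand, the two implications above complete the proof.
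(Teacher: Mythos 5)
Your proof is correct and follows essentially the same route as the paper's: clear denominators using centrality, apply the lower nil $M$-Armendariz hypothesis to the numerator polynomials in $R[M]$, and transfer membership in the lower nilradical back and forth via the identity $N_*(S^{-1}R)=S^{-1}N_*(R)$. You are in fact somewhat more careful than the paper, which takes common denominators without comment, asserts the radical identity without proof, and dismisses the converse (together with the contraction identity $N_*(S^{-1}R)\cap R=N_*(R)$ that it silently requires) as obvious.
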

\begin{proof} Let $R$ be a lower nil $M$-Armendariz ring and $\alpha \beta \in N_{*}(S^{-1}R)[M]$ where $\alpha = p_{1}g_{1}+p_{2}g_{2}+\ldots+p_{n}g_{n}$ and $\beta = q_{1}h_{1}+q_{2}h_{2}+\ldots+q_{m}h_{m} \in (S^{-1}R)[M]$. Here, we consider $p_{i} = a_{i}u^{-1}$ and $q_{j} = b_{j}v^{-1}$, where $a_{i}, b_{j} \in R$ for each $i, j$ and $u, v \in S$. From $\alpha \beta \in N_{*}(S^{-1}R)[M]$, we have $(a_{1}g_{1}+a_{2}g_{2}+\ldots+a_{n}g_{n})(b_{1}h_{1}+b_{2}h_{2}+\ldots+b_{m}h_{m}) \in N_{*}(R)[M]$. Since $R$ is the lower nil $M$-Armendariz ring, therefore $a_{i}b_{j} \in N_{*}(R)$. Since $N_{*}(S^{-1}(R)) = S^{-1}N_{*}(R)$, so $p_{i}q_{j} = a_{i}u^{-1}b_{j}v^{-1} = a_{i}b_{j}(uv)^{-1} \in N_{*}(S^{-1}(R))$. Thus, $S^{-1}R$ is a lower nil $M$-Armendariz ring.\\
Converse is obvious, since $N_{*}(R)\subseteq N_{*}(S^{-1}R)$ and $R$ is subring of $S^{-1}R$. Therefore, $R$ is a lower nil $M$-Armendariz ring.
\end{proof}
\begin{lem}\label{lem2} Let $M$ be a monoid with a nontrivial element of finite order and $R$ a ring such that $0 \neq 1$. Then $R$ is not a lower nil $M$-Armendariz ring.
\end{lem}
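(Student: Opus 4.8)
The plan is to disprove the lower nil $M$-Armendariz property by exhibiting an explicit counterexample pair in $R[M]$. Since $M$ has a nontrivial element of finite order, I would fix $g \in M$ with $g \neq e$ and $g^{n} = e$, taking $n \geq 2$ to be the order of $g$; then $g$ is invertible and $e, g, g^{2}, \ldots, g^{n-1}$ are pairwise distinct elements of $M$. These distinct monoid elements are what allow the following two elements of $R[M]$ to be well formed:
\[
\alpha = 1\cdot e - 1\cdot g, \qquad \beta = 1\cdot e + 1\cdot g + 1\cdot g^{2} + \cdots + 1\cdot g^{n-1}.
\]

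Next I would carry out a telescoping computation. Multiplying out and using $g^{n} = e$ gives
\[
\alpha\beta = (e + g + \cdots + g^{n-1}) - (g + g^{2} + \cdots + g^{n}) = e - g^{n} = e - e = 0,
\]
so in particular $\alpha\beta = 0 \in N_{*}(R)[M]$ and the hypothesis of the lower nil $M$-Armendariz condition is met. Now I would inspect the coefficients: writing $\alpha = a_{1}g_{1} + a_{2}g_{2}$ with $g_{1} = e$, $a_{1} = 1$, and $\beta = b_{1}h_{1} + \cdots + b_{n}h_{n}$ with $h_{1} = e$, $b_{1} = 1$, the relevant coefficient product is $a_{1}b_{1} = 1\cdot 1 = 1$. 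To finish, I need only observe that $1 \notin N_{*}(R)$, which contradicts the requirement that every $a_{i}b_{j}$ lie in $N_{*}(R)$, and hence shows $R$ is not lower nil $M$-Armendariz.

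The one point deserving genuine care — and thus the main (if modest) obstacle — is justifying $1 \notin N_{*}(R)$. Here I would invoke the hypothesis $0 \neq 1$: a nonzero ring with identity admits at least one prime ideal, so the prime radical $N_{*}(R)$, being the intersection of all prime ideals, is a proper ideal and therefore cannot contain the unit $1$. The remaining subtlety is purely combinatorial, namely ensuring that the monoid elements appearing in $\alpha$ and $\beta$ are distinct so that the telescoping is legitimate and $\alpha \neq 0$; this is exactly where the finiteness and nontriviality of the order of $g$ are used, guaranteeing $e, g, \ldots, g^{n-1}$ are distinct with $e \neq g$. Everything else is a direct verification, and the argument parallels the computation already appearing in the earlier example showing $\mathbb{Z}$ fails to be lower nil $M$-Armendariz over a non-u.p.\ monoid.
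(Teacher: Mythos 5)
Your proposal is correct and follows essentially the same route as the paper: the paper takes $\alpha = 1e+1g+\cdots+1g^{n-1}$ and $\beta = 1e+(-1)g$, i.e.\ your two factors in the opposite order, and the telescoping product vanishes either way since $g$ commutes with its own powers. Your added justifications (that $e, g, \ldots, g^{n-1}$ are pairwise distinct and that $1 \notin N_{*}(R)$ because the prime radical is a proper ideal of a nonzero ring) are correct details the paper leaves implicit.
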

\begin{proof} Suppose $e \neq g \in M$ has order $n$ and consider $\alpha = 1e+1g+\ldots+1g^{n-1}$ and $\beta = 1e+(-1)g$. Then $\alpha \beta = 0$ i.e. $\alpha \beta \in N_{*}(R)[M]$ but $1 \notin N_{*}(R)$. Hence, $R$ is not a lower nil $M$-Armendariz ring.
\end{proof}

\begin{lem}\label{lem3} Let $N$ be a submonoid of the monoid $M$. If $R$ is a lower nil $M$-Armendariz ring, then $R$ is a lower nil $N$-Armendariz ring.
\end{lem}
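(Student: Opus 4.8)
The plan is to exploit the fact that the submonoid inclusion $N \hookrightarrow M$ induces a natural embedding of monoid rings $R[N] \hookrightarrow R[M]$. Since $N$ contains the identity $e$ of $M$ and is closed under the monoid operation, the product of any two elements of $R[N]$ computed inside $R[N]$ agrees with their product computed inside $R[M]$; in other words, $R[N]$ is a subring of $R[M]$. This is the only structural observation needed, and it reduces the lemma to a straightforward restriction argument.

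First I would take arbitrary elements $\alpha = a_{1}g_{1}+a_{2}g_{2}+\ldots+a_{n}g_{n}$ and $\beta = b_{1}h_{1}+b_{2}h_{2}+\ldots+b_{m}h_{m}$ in $R[N]$, so that all $g_{i}, h_{j} \in N$, and assume $\alpha\beta \in N_{*}(R)[N]$. Viewing $\alpha$ and $\beta$ as elements of $R[M]$ through the inclusion above, their product is unchanged. Since every support element appearing in $N_{*}(R)[N]$ lies in $N \subseteq M$, we have $N_{*}(R)[N] \subseteq N_{*}(R)[M]$, and therefore $\alpha\beta \in N_{*}(R)[M]$.

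At this point I would simply invoke the hypothesis that $R$ is a lower nil $M$-Armendariz ring: from $\alpha\beta \in N_{*}(R)[M]$ with $\alpha, \beta \in R[M]$, we obtain $a_{i}b_{j} \in N_{*}(R)$ for each $i$ and $j$. This is exactly the conclusion required for $R$ to be a lower nil $N$-Armendariz ring, so the proof is finished.

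There is no genuine obstacle here; the one point meriting a word of justification is that the monoid-ring multiplication restricts correctly, i.e.\ that no cross term $g_{i}h_{j}$ can escape $N$. This is immediate from $N$ being a submonoid, since $g_{i}, h_{j} \in N$ forces $g_{i}h_{j} \in N$, so the support of $\alpha\beta$ remains inside $N$ and the two computations of $\alpha\beta$ coincide.
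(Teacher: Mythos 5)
Your proof is correct; the paper in fact states this lemma without any proof, and your argument (embedding $R[N]$ as a subring of $R[M]$, noting $N_{*}(R)[N] \subseteq N_{*}(R)[M]$, and then applying the lower nil $M$-Armendariz hypothesis) is exactly the standard restriction argument the authors evidently had in mind. No gaps.
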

\begin{df}If $T(G)$ contain elements of finite order in an abelian group $G$. Then $T(G)$ is a fully invariant subgroup of $G$. Group $G$ is said to be torsion-free group if $T(G) = \{e\}$.
\end{df}
\begin{thm} Let $G$ be a finitely generated abelian group. Then the following conditions on $G$ are equivalent:
\begin{itemize}
\item[$(1)$] $G$ is torsion-free.
\item[$(2)$] There exists a ring $R$ with $|R| \geq 2$ such that $R$ is a lower nil $G$-Armendariz ring.
\end{itemize}
\end{thm}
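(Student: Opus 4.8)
The plan is to prove the two implications separately: $(2)\Rightarrow(1)$ by contraposition using Lemma \ref{lem2}, and $(1)\Rightarrow(2)$ by combining the fundamental theorem of finitely generated abelian groups with Proposition \ref{pro1}. The guiding idea is that torsion-freeness is exactly what is needed to realize $G$ as a unique-product (indeed strictly totally ordered) monoid, after which the earlier machinery produces a witness ring for free, while the presence of any torsion element is an immediate obstruction by Lemma \ref{lem2}.

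For $(2)\Rightarrow(1)$ I would argue contrapositively. Suppose $G$ is not torsion-free, so $T(G)\neq\{e\}$ and $G$ contains a nontrivial element of finite order. Since $G$ is in particular a monoid possessing such an element, Lemma \ref{lem2} applies verbatim and shows that \emph{every} ring $R$ with $0\neq 1$, that is with $|R|\geq 2$, fails to be lower nil $G$-Armendariz. Hence no ring as required in condition (2) can exist, which is precisely the contrapositive of $(2)\Rightarrow(1)$.

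For $(1)\Rightarrow(2)$, assume $G$ is torsion-free. By the structure theorem for finitely generated abelian groups, $G\cong\mathbb{Z}^{n}$ for some $n\geq 0$. The key step is to exhibit $\mathbb{Z}^{n}$ as a u.p. monoid, and I would do this by imposing the lexicographic order: writing the group operation additively, if $r_{1}<r_{2}$ then $(r_{2}+s)-(r_{1}+s)=r_{2}-r_{1}>0$ for every $s$, and likewise on the other side, so the order is strictly compatible with the operation and $\mathbb{Z}^{n}$ becomes a strictly totally ordered monoid. As recorded in the remark preceding the corollaries of Proposition \ref{pro1}, every strictly totally ordered monoid is a u.p. monoid, whence $G\cong\mathbb{Z}^{n}$ is a u.p. monoid. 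Now take $R$ to be any reduced ring with $|R|\geq 2$, for instance $R=\mathbb{Z}$; then $N(R)=0=N_{*}(R)$, so $R$ is $2$-primal, and Proposition \ref{pro1} yields that $R$ is a lower nil $G$-Armendariz ring. The degenerate case $n=0$, i.e. $G=\{e\}$, is handled directly, since every ring is $G$-Armendariz for the trivial monoid and hence lower nil $G$-Armendariz.

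The main obstacle is the middle step of $(1)\Rightarrow(2)$: verifying that torsion-freeness genuinely delivers a u.p. (in fact strictly ordered) monoid structure, so that Proposition \ref{pro1} becomes applicable. Once the lexicographic order on $\mathbb{Z}^{n}$ is checked to be strictly compatible with addition, everything else is routine; the reduction to $\mathbb{Z}^{n}$ via the structure theorem and the selection of a concrete reduced (hence $2$-primal) witness ring are the supporting ingredients.
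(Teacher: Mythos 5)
Your proposal is correct and follows essentially the same route as the paper: Lemma \ref{lem2} handles the torsion obstruction for $(2)\Rightarrow(1)$, and the structure theorem plus the u.p.\ property of $\mathbb{Z}^{n}$ together with Proposition \ref{pro1} (the paper uses its Corollary \ref{cor1} with a semicommutative witness ring, you use a reduced one) gives $(1)\Rightarrow(2)$. The only cosmetic differences are that you apply Lemma \ref{lem2} directly to $G$ rather than passing to the cyclic submonoid $\langle g\rangle$ via Lemma \ref{lem3}, and you verify the strict total order on $\mathbb{Z}^{n}$ explicitly instead of citing Liu's Lemma 1.13 --- both are fine.
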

\begin{proof} $\textbf{(1)} \Rightarrow \textbf{(2):}$ If $G$ is a finitely generated abelian group with $T(G) = \{e\}$. Then \\
$G \cong \mathbb{Z}\times \mathbb{Z}\times\ldots \times \mathbb{Z}$. By [\cite{Z}, Lemma 1.13], $G$ is u.p.-monoid. Also, by corollary \ref{cor1}, $R$ is a lower nil $G$-Armendariz, for any semicommutative ring $R$ with $|R| \geq 2$,.\\
$\textbf{(2)}\Rightarrow \textbf{(1):}$ Let $g \in T(G)$ and $g \neq e$. Then $H = ~ <g>$ is a cyclic group of finite order. If a ring $R \neq 0$ is lower nil $G$-Armendariz ring, then $R$ is lower nil $H$-Armendariz ring by Lemma \ref{lem3}, which contradicts Lemma \ref{lem2}. Thus, every ring $R \neq 0$ is not a lower nil $G$-Armendariz ring. Hence, $g = e$ and $T(G) = {e}$.
\end{proof}

\begin{pro} Let $M$ be a monoid with $M\geq 2$. Then the following conditions are equivalent:
\begin{itemize}
\item[$(1)$] $R$ is a lower nil $M$-Armendariz ring.
\item[$(2)$]
$H_{3}(R)  = \left\{\left(
               \begin{array}{ccc}
                 a_{11} & 0 & 0 \\
                 a_{21} & a_{22} & a_{23} \\
                 0 & 0 & a_{33} \\
               \end{array}
             \right): a_{ij} \in R \right\}$ is a lower nil $M$-Armendariz ring.
\end{itemize}
\end{pro}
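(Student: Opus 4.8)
The plan is to follow exactly the scheme used in the paper for $T_n(R)$: the two ingredients are an explicit description of the prime radical $N_*(H_3(R))$ and the natural entrywise isomorphism $H_3(R)[M]\cong H_3(R[M])$. I would first isolate the radical computation as the technical core. Let $K$ be the set of matrices of $H_3(R)$ with zero diagonal; a direct multiplication check shows these are precisely the matrices whose only possibly nonzero entries sit in positions $(2,1)$ and $(2,3)$, and that $K^2 = 0$, so $K$ is a nilpotent two-sided ideal and hence $K\subseteq N_*(H_3(R))$. The diagonal projection $\pi\colon H_3(R)\to R\times R\times R$, $(a_{pq})\mapsto(a_{11},a_{22},a_{33})$, is a surjective ring homomorphism with kernel $K$, so $H_3(R)/K\cong R\times R\times R$. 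Using $K\subseteq N_*(H_3(R))$ together with $N_*(S/I)=N_*(S)/I$ for $I\subseteq N_*(S)$ and $N_*(R_1\times R_2\times R_3)=N_*(R_1)\times N_*(R_2)\times N_*(R_3)$, pulling back gives
\[
N_*(H_3(R)) = \{(a_{pq})\in H_3(R) : a_{11},a_{22},a_{33}\in N_*(R)\},
\]
i.e. the radical condition is imposed only on the diagonal.

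For $\textbf{(1)}\Rightarrow\textbf{(2)}$, I would transport $\alpha = \sum_i A_i g_i$ and $\beta = \sum_j B_j h_j$ with $A_i=(a_{pq}^i),\ B_j=(b_{pq}^j)$ through the isomorphism $H_3(R)[M]\cong H_3(R[M])$ (defined entrywise, exactly as in the $T_n$ case). Under this identification and the description of the radical above, the hypothesis $\alpha\beta\in N_*(H_3(R))[M]$ says precisely that each diagonal product $(\sum_i a_{pp}^i g_i)(\sum_j b_{pp}^j h_j)$ lies in $N_*(R)[M]$ for $p=1,2,3$. Applying the lower nil $M$-Armendariz property of $R$ to each of these three scalar relations yields $a_{pp}^i b_{pp}^j\in N_*(R)$ for all $i,j$ and each $p$. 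Since the diagonal of $A_iB_j$ is $(a_{11}^i b_{11}^j,\,a_{22}^i b_{22}^j,\,a_{33}^i b_{33}^j)$, this is exactly $A_iB_j\in N_*(H_3(R))$, as required.

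For $\textbf{(2)}\Rightarrow\textbf{(1)}$, I would embed $R$ in $H_3(R)$ via $a\mapsto aI=\mathrm{diag}(a,a,a)$, a subring isomorphic to $R$ with $aI\in N_*(H_3(R))$ if and only if $a\in N_*(R)$. Given $\alpha=\sum_i a_i g_i,\ \beta=\sum_j b_j h_j\in R[M]$ with $\alpha\beta\in N_*(R)[M]$, the images $\sum_i(a_iI)g_i$ and $\sum_j(b_jI)h_j$ multiply to the image of $\alpha\beta$, namely $\sum_{i,j}(a_ib_jI)g_ih_j$, which lies in $N_*(H_3(R))[M]$ by the previous remark. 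Applying the lower nil $M$-Armendariz property of $H_3(R)$ gives $a_ib_jI\in N_*(H_3(R))$, hence $a_ib_j\in N_*(R)$, so $R$ is lower nil $M$-Armendariz.

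The routine parts are the $3\times 3$ multiplication bookkeeping and verifying that the entrywise map is a ring isomorphism $H_3(R)[M]\cong H_3(R[M])$; both are transcriptions of the $T_n(R)$ argument already carried out in the paper. The step carrying the genuine content is the identification of $N_*(H_3(R))$, and the main obstacle there is handling the off-diagonal positions $(2,1)$ and $(2,3)$ correctly: one must see that the zero-diagonal part squares to zero so that these entries are unconstrained in the radical, which is what reduces the whole statement to three independent scalar applications of the hypothesis on the diagonal.
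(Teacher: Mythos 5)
Your proof is correct, and its skeleton (identify $N_*(H_3(R))$, pass through the entrywise isomorphism $H_3(R)[M]\cong H_3(R[M])$, reduce to three diagonal instances of the hypothesis) is the same as the paper's. The differences are worth noting. For the key lemma, the paper argues elementwise: it asserts a uniform nilpotency index $n$ for all elements of $Ra_{ii}R$ and concludes $C^{2n}=0$, hence $C\in N_*(H_3(R))$ --- a step that is not justified as written, since elements of $Ra_{ii}R$ need not have bounded index and a nilpotent element need not lie in the prime radical. Your route via the square-zero ideal $K$ of zero-diagonal matrices, the quotient $H_3(R)/K\cong R\times R\times R$, and the correspondence $N_*(S/I)=N_*(S)/I$ for $I\subseteq N_*(S)$ is cleaner and actually repairs this gap; the extra ingredient you need (and correctly verify) is that $K$ is a two-sided ideal with $K^2=0$, so that the off-diagonal positions are unconstrained in the radical. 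Second, the paper's proof stops after $(1)\Rightarrow(2)$; you supply the converse via the diagonal embedding $a\mapsto aI$, which mirrors the constant-matrix argument the paper uses for its $T_n(R)$ proposition and is correct here since $aI\in N_*(H_3(R))$ if and only if $a\in N_*(R)$. In short: same strategy, but your version is the more complete and more rigorous of the two.
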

\begin{proof} First, we claim that
\begin{equation*}
N_{*}(H_{3}(R)) = \left(
                    \begin{array}{ccc}
                      N_{*}(R) & 0 & 0 \\
                      R & N_{*}(R) & R \\
                      0 & 0 & N_{*}(R) \\
                    \end{array}
                  \right).
\end{equation*}
If we consider $\left(
             \begin{array}{ccc}
               a_{11} & 0 & 0 \\
               a_{21} & a_{22} & a_{23} \\
               0 & 0 & a_{33} \\
             \end{array}
           \right) \in \left(
                    \begin{array}{ccc}
                      N_{*}(R) & 0 & 0 \\
                      R & N_{*}(R) & R \\
                      0 & 0 & N_{*}(R) \\
                    \end{array}
                  \right)$,\\

then $a_{ii} \in N_{*}(R)$ for each $1 \leq i \leq 3$. So, $Ra_{ii}R \subseteq N(R)$ for each $i$, $1 \leq i \leq 3$. Then there exists a positive integer $n$ corresponding to nilpotency of  all elements of $Ra_{ii}R$ such that $\left(
             \begin{array}{ccc}
               a_{11} & 0 & 0 \\
               a_{21} & a_{22} & a_{23} \\
               0 & 0 & a_{33} \\
             \end{array}
           \right)^{2n} = 0  \in N_{*}(H_{3}(R))$. Hence, $\left(
                    \begin{array}{ccc}
                      N_{*}(R) & 0 & 0 \\
                      R & N_{*}(R) & R \\
                      0 & 0 & N_{*}(R) \\
                    \end{array}
                  \right) \subseteq N_{*}(H_{3}(R))$.\\

Conversely, $\left(
             \begin{array}{ccc}
               a_{11} & 0 & 0 \\
               a_{21} & a_{22} & a_{23} \\
               0 & 0 & a_{33} \\
             \end{array}
           \right) = C \in N_{*}(H_{3}(R))$. Then $(H_{3}(R))C(H_{3}(R)) \subseteq N(H_{3}(R))$. This implies $Ra_{ii}R \subseteq N(R)$ and hence $a_{ii} \in N_{*}(R)$ for each $1 \leq i \leq 3$.
Therefore
\begin{equation*}
N_{*}(H_{3}(R)) \subseteq \left(
                    \begin{array}{ccc}
                      N_{*}(R) & 0 & 0 \\
                      R & N_{*}(R) & R \\
                      0 & 0 & N_{*}(R) \\
                    \end{array}
                  \right).
\end{equation*}
Next, we prove  $H_{3}(R)$ is a lower nil $M$-Armendariz ring. Since $\Phi : H_{3}(R)[M]\mapsto H_{3}(R[M])$ defined by
\begin{equation*}
\sum_{i=1}^{n}\left(
                \begin{array}{ccc}
                  a_{11}^{i} & 0 & 0 \\
                  a_{21}^{i} & a_{22}^{i} & a_{23}^{i} \\
                  0 & 0 & a_{33}^{i} \\
                \end{array}
              \right)g_{i} \mapsto \left(
                \begin{array}{ccc}
                  \sum_{i=1}^{n}a_{11}^{i}g_{i} & 0 & 0 \\
                  \sum_{i=1}^{n}a_{21}^{i}g_{i} & \sum_{i=1}^{n}a_{22}^{i}g_{i} & \sum_{i=1}^{n}a_{23}^{i}g_{i} \\
                  0 & 0 & \sum_{i=1}^{n}a_{33}^{i}g_{i} \\
                \end{array}
              \right)
\end{equation*}
is an isomorphism. Let $\alpha = A_{1}g_{1}+A_{2}g_{2}+\cdots+A_{n}g_{n}$, $\beta = B_{1}h_{1}+B_{2}h_{2}+\cdots+B_{m}h_{m} \in H_{3}(R)[M]$ such that $\alpha \beta \in N_{*}(H_{3}(R))[M]$, where $A_{i}, B_{j} \in H_{3}(R)$. Also, let $\alpha_{p} = \sum_{i=1}^{n}a_{pp}^{i}g_{i}$, $\beta_{p} = \sum_{j=1}^{m}b_{pp}^{j}h_{j}\in R[M]$, for $p = 1, 2, 3.$  Then by above isomorphism, it is easy to see that $\alpha \beta \in N_{*}(H_{3}(R)[M])$. So $\alpha_{p}\beta_{p} \in N_{*}(R)[M]$ and this implies $a_{pp}^{i}b_{pp}^{j} \in N_{*}(R)$ for each $i, j, p$. Therefore, $A_{i}B_{j} \in N_{*}(H_{3}(R))$. Thus, $H_{3}(R)$ is a lower nil $M$-Armendariz ring.
\end{proof}
\begin{pro}
\begin{itemize}
\item[$(1)$] Direct sum of lower nil $M$-Armendariz rings is a lower nil $M$-Armendariz ring.
\item[$(2)$] Direct product of lower nil $M$-Armendariz rings is a lower nil $M$-Armendariz ring.
\end{itemize}
\end{pro}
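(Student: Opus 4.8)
The plan is to reduce both assertions to a coordinatewise application of the lower nil $M$-Armendariz property on each factor, using two structural inputs: that forming the monoid ring commutes with direct sums and direct products, and that the prime radical commutes with these same constructions. Once these are in place, the verification of the Armendariz condition is routine, so the whole argument is essentially bookkeeping built on top of the two radical/functor identities.

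First I would record the ring isomorphisms $\left(\bigoplus_{k} R_k\right)[M] \cong \bigoplus_{k} R_k[M]$ and $\left(\prod_{k} R_k\right)[M] \cong \prod_{k} R_k[M]$, each realized by viewing a monoid-ring element $\sum_i (a_i^{(k)})_k\, g_i$ as the tuple $\left(\sum_i a_i^{(k)} g_i\right)_k$. Under these identifications the $k$-th coordinate projection $\pi_k$ is a surjective ring homomorphism sending $\alpha = \sum_i a_i g_i$ to $\alpha_k = \sum_i a_i^{(k)} g_i \in R_k[M]$, and similarly for $\beta$.

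Next I would establish the radical identities $N_*\!\left(\bigoplus_k R_k\right) = \bigoplus_k N_*(R_k)$ and $N_*\!\left(\prod_k R_k\right) = \prod_k N_*(R_k)$, arguing from the description of $N_*$ as the intersection of all prime ideals, equivalently as the set of strongly nilpotent elements. Each projection $\pi_k$ carries strongly nilpotent elements to strongly nilpotent elements, which gives the inclusion $\subseteq$; for the reverse inclusion one checks that a tuple all of whose coordinates are strongly nilpotent is itself strongly nilpotent. This is the step that needs the most care, and it is the main obstacle for an infinite index set: since $N_*$ is a nil ideal, one must control the nilpotency coordinatewise, whereas in the finite case the identity follows at once from the classification of the prime ideals of a finite product as $P_k$ in the $k$-th factor and the whole ring elsewhere.

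Finally, given $\alpha,\beta$ in the monoid ring of the direct sum (respectively product) with $\alpha\beta$ lying in $N_*(\,\cdot\,)[M]$, I would apply $\pi_k$ and use the radical identity to get $\alpha_k\beta_k \in N_*(R_k)[M]$ for every $k$. Since each $R_k$ is lower nil $M$-Armendariz, this yields $a_i^{(k)} b_j^{(k)} \in N_*(R_k)$ for all $i,j,k$; reassembling coordinates and invoking the radical identity a second time gives $a_i b_j \in N_*\!\left(\bigoplus_k R_k\right)$ (respectively $N_*\!\left(\prod_k R_k\right)$) for all $i,j$. This establishes both (1) and (2), the two cases differing only in whether $\bigoplus$ or $\prod$ is used throughout.
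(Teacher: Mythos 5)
Your proposal is essentially the paper's own argument: both reduce to coordinates via the identification of the monoid ring of a product (or sum) with the product (or sum) of the monoid rings, invoke the radical identity $N_*\bigl(\prod R_k\bigr)=\prod N_*(R_k)$ (resp.\ $N_*\bigl(\bigoplus R_k\bigr)=\bigoplus N_*(R_k)$), apply the lower nil $M$-Armendariz hypothesis factorwise, and reassemble. The only difference is that you flag the radical identity as the delicate step, whereas the paper dismisses it as ``well known.''

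Your worry there is in fact justified, and it is a weakness shared by the paper: for an infinite index set only the inclusion $N_*\bigl(\prod_k R_k\bigr)\subseteq\prod_k N_*(R_k)$ holds in general, since $N_*$ is a nil ideal while a tuple of nilpotent coordinates with unbounded nilpotency indices (e.g.\ $(x,x,\dots)$ in $\prod_n k[x]/(x^n)$) is not nilpotent, hence not strongly nilpotent. So the reverse inclusion you propose to ``check'' genuinely fails for infinite products, and both your argument and the paper's are only complete for finite index sets (the direct-sum case is safe, as each element has finite support and reduces to the finite situation). To salvage the infinite-product case one only needs the inclusion that does hold, applied at the reassembly step: from $a_i^{(k)}b_j^{(k)}\in N_*(R_k)$ for all $k$ one gets $a_ib_j\in\prod_k N_*(R_k)$, which is \emph{not} automatically inside $N_*\bigl(\prod_k R_k\bigr)$; so either restrict to finitely many factors or supply an additional argument.
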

\begin{proof}
\begin{itemize}
\item[$(1)$] It is well known that $N_{*}(\prod R_{i}) = \prod(N_{*}(R_{i}))$. Let $\alpha = \sum_{i=0}^{m}a_{i_\delta}g_{i} \in R[M]$ and $\beta = \sum_{j=0}^{n}b_{j_\delta}h_{j} \in R[M]$ such that $\alpha \beta \in N_{*}(R)[M]$. Then $\alpha = (\alpha_{\delta})$ and $\beta = (\beta_{\delta}) \in R[M]$, where $\alpha_{\delta} = \sum_{i=0}^{m}a_{i_\delta}g_{i}$ and $\beta_{\delta} = \sum_{j=0}^{n}b_{j_\delta}g_{j}$. So, $\alpha_{\delta}\beta_{\delta} \in N_{*}(R_{\delta})$ where $\delta \in I$. Since $R_{\delta}$ is the lower nil $M$-Armendariz ring, therefore $a_{i_\delta}b_{j_\delta} \in N_{*}(R_{\delta})$. Thus, $R$ is a lower nil $M$-Armendariz ring.\\
\item[$(2)$]It is easily seen that $N_{*}(\oplus R_{i}) = \oplus(N_{*}(R_{i}))$. As above, direct sum of lower nil $M$-Armendariz ring is a lower nil $M$-Armendariz ring.
\end{itemize}
\end{proof}
\begin{pro}\label{pro2}
For a monoid $M$, if $R$ is a semicommutative lower nil $M$-Armendariz ring, then $N_{*}(R)[M] = N_{*}(R[M])$.
\end{pro}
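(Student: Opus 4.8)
The plan is to prove the two inclusions $N_{*}(R)[M] \subseteq N_{*}(R[M])$ and $N_{*}(R[M]) \subseteq N_{*}(R)[M]$ separately, writing $\overline{R} = R/N_{*}(R)$ throughout and using the standard identification $\overline{R}[M] \cong R[M]/N_{*}(R)[M]$ coming from the coefficientwise quotient map $\pi : R[M] \to \overline{R}[M]$, whose kernel is exactly $N_{*}(R)[M]$.

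For the inclusion $N_{*}(R)[M] \subseteq N_{*}(R[M])$ I would argue through prime ideals, since $N_{*}(R[M])$ is the intersection of all prime ideals of $R[M]$. Fix a prime ideal $P$ of $R[M]$ and set $\mathfrak{p} = P \cap R$. The key observation is that inside the monoid ring the copy of $R$ (sitting in degree $e$) commutes with the copy of $M$, so for $a, b \in R$ and any $\gamma = \sum_{g} s_{g} g \in R[M]$ one has $a \gamma b = \sum_{g} (a s_{g} b) g$; hence $a R b \subseteq \mathfrak{p}$ forces $a R[M] b \subseteq P$, and primeness of $P$ gives $a \in \mathfrak{p}$ or $b \in \mathfrak{p}$. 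Thus $\mathfrak{p}$ is a prime ideal of $R$, so $N_{*}(R) \subseteq \mathfrak{p} \subseteq P$; since $P$ is an ideal and $M \subseteq R[M]$, this yields $N_{*}(R)[M] \subseteq P$. Intersecting over all primes $P$ gives $N_{*}(R)[M] \subseteq N_{*}(R[M])$, and this step uses neither the lower nil $M$-Armendariz hypothesis nor semicommutativity.

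For the reverse inclusion I would exploit both hypotheses to show that $\overline{R}[M]$ is reduced. Since $R$ is semicommutative it is $2$-primal, so $\overline{R} = R/N_{*}(R)$ is reduced; and since $R$ is lower nil $M$-Armendariz, $\overline{R}$ is $M$-Armendariz (the equivalence already invoked in the proof of Proposition \ref{pro6}). Now if $\alpha \in \overline{R}[M]$ satisfies $\alpha^{2} = 0$, the $M$-Armendariz property applied to $\alpha \cdot \alpha = 0$ gives $a_{i} a_{j} = 0$ for all coefficients, in particular $a_{i}^{2} = 0$, whence $a_{i} = 0$ by reducedness and $\alpha = 0$; since a ring with no nonzero square-zero element is reduced (if $x^{m} = 0$ with $m \geq 2$ minimal, then $x^{m-1}$ is a nonzero square-zero element), $\overline{R}[M]$ is reduced and therefore semiprime, i.e. $N_{*}(\overline{R}[M]) = 0$. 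Finally, the surjection $\pi$ carries the prime radical into the prime radical, so $\pi\bigl(N_{*}(R[M])\bigr) \subseteq N_{*}(\overline{R}[M]) = 0$, giving $N_{*}(R[M]) \subseteq \ker \pi = N_{*}(R)[M]$. Combining the two inclusions proves the equality.

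The main obstacle is the reverse inclusion, and within it the passage from ``reduced and $M$-Armendariz'' to ``the monoid ring is reduced''; the decisive point is that $M$-Armendariz turns the single relation $\alpha^{2} = 0$ into vanishing of all coefficient products, after which reducedness finishes the job. Two smaller points deserve care: the elementary fact that the absence of nonzero square-zero elements already forces reducedness, and, in the first inclusion, the verification that $R$ and $M$ commute inside $R[M]$ so that contraction of a prime ideal of $R[M]$ to $R$ is again prime.
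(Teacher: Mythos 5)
Your proof is correct, but it takes a genuinely different route from the paper's. For the inclusion $N_{*}(R)[M]\subseteq N_{*}(R[M])$ the paper argues by brute force: it fixes $\alpha$ with coefficients in $N_{*}(R)$, expands $(\beta_{1}\alpha\beta_{2})^{klmn}$ for arbitrary $\beta_{1},\beta_{2}\in R[M]$, and uses semicommutativity to kill every monomial, so that the ideal generated by $\alpha$ is nil of bounded index. Your contraction argument --- that $P\cap R$ is prime for every prime $P$ of $R[M]$ because $aRb\subseteq P\cap R$ forces $aR[M]b\subseteq P$ --- establishes the same inclusion for an arbitrary ring and an arbitrary monoid, using neither hypothesis of the proposition; this is cleaner and strictly more general than what the paper does. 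For the reverse inclusion the paper again computes: it takes $\alpha\in N_{*}(R[M])$, expands powers of $\beta_{1}\alpha\beta_{2}$, applies the lower nil $M$-Armendariz hypothesis coefficientwise, and concludes $b_{1}a_{1}c_{1}\in N(R)$ for all $b_{1},c_{1}$, hence $a_{1}\in N_{*}(R)$ (implicitly invoking $2$-primality, which comes from semicommutativity). You instead package the same two ingredients --- semicommutative $\Rightarrow$ $2$-primal $\Rightarrow$ $\overline{R}$ reduced, and lower nil $M$-Armendariz $\Leftrightarrow$ $\overline{R}$ is $M$-Armendariz --- into the single statement that $\overline{R}[M]\cong R[M]/N_{*}(R)[M]$ is reduced, and then pull back along the quotient map. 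This buys a slightly stronger conclusion (namely that $N_{*}(R)[M]$ is a semiprime, indeed completely semiprime, ideal of $R[M]$) and avoids the delicate bookkeeping with nilpotency indices that makes the paper's version hard to read; all the auxiliary facts you invoke (semicommutative rings are $2$-primal, the quotient characterization used in Proposition \ref{pro6}, square-zero-free implies reduced, and surjections map prime radicals into prime radicals) are standard and correctly applied.
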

\begin{proof} Let $\alpha = a_{1}g_{1}+a_{2}g_{2}+\cdots+a_{m}g_{m} \in N_{*}(R)[M]$, where $a_{i} \in N_{*}(R)$ for each $1\leq i \leq n$. Now, $Ra_{i}R \subseteq N(R)$ for each $1\leq i \leq n$. Then there exists $k_{i}$, a positive integer, such that $(Ra_{i}R)^{k_{i}} = 0$ for each fixed $a_{i}$, where $1\leq i \leq n$. Let $k > max \{k_{i}: 1\leq i \leq n\}$. Let $\beta_{1} = b_{1}h_{1}+b_{2}h_{2}+\cdots+b_{l}h_{l}$ and $\beta_{2} = c_{1}h_{1}^{'}+c_{2}h_{2}^{'}+\cdots+c_{n}h_{n}^{'} \in R[M]$. Then $\beta_{1}\alpha\beta_{2} = (b_{1}a_{1}c_{1}h_{1}g_{1}h_{1}^{'}+b_{1}a_{1}c_{2}h_{1}g_{2}h_{2}^{'}+\cdots+b_{1}a_{1}c_{n}h_{1}g_{1}h_{n}^{'}) + (b_{1}a_{2}c_{1}h_{1}g_{2}h_{1}^{'}+\cdots+b_{1}a_{2}c_{n}h_{1}g_{2}h_{n}^{'}) +\cdots+\\
(b_{1}a_{m}c_{1}h_{1}g_{m}h_{1}^{'}+\cdots+b_{1}a_{m}c_{n}h_{1}g_{m}h_{n}^{'}) + (b_{2}a_{1}c_{1}h_{2}g_{1}h_{1}^{'}+\cdots+b_{2}a_{m}c_{n}h_{2}g_{1}h_{n}^{'}) +\cdots+ (b_{2}a_{m}c_{1}h_{2}g_{m}h_{1}^{'}+\cdots+b_{2}a_{m}c_{n}h_{2}g_{m}h_{n}^{'})+\cdots+(b_{l}a_{1}c_{1}h_{l}g_{1}h_{1}^{'}+\cdots+b_{l}a_{1}c_{n}h_{l}g_{1}h_{n}^{'})+\cdots+(b_{l}a_{m}c_{1}h_{l}g_{m}h_{1}^{'}+\cdots+b_{l}a_{m}c_{n}h_{l}g_{m}h_{n}^{'})$. For brevity of notation, let $\beta_{1}\alpha\beta_{2} = A_{1}t_{1}+A_{2}t_{2}+\cdots+A_{lmn}t_{lmn}$. Then $(\beta_{1}\alpha\beta_{2})^{klmn} = (A_{1}t_{1}+A_{2}t_{2}+\cdots+A_{lmn}t_{lmn})^{klmn} = \sum_{u}(\sum_{t_{s_{1}}t_{s_{2}\ldots t_{s_{klmn}}}= u}{A_{s_{1}}A_{s_{2}}\ldots A_{s_{klmn}}})u$. Here, in $(\beta_{1}\alpha\beta_{2})^{klmn}$, each term of $A_{s_{1}}A_{s_{2}}\ldots A_{s_{klmn}}$, there exist some $A_{j}$ for $1 \leq j \leq lmn$, at least $k$ times. Therefore, we can replace $A_{s_{1}}A_{s_{2}}\ldots A_{s_{klmn}}$ by $B_{1}A_{j}^{v_{1}}B_{2}A_{j}^{v_{2}}\ldots B_{w}A_{j}^{v_{p}}$, where $v_{1}+v_{2}+\cdots+v_{p} > k$ and $B_{q}$ is a product of some elements from the set $\{A_{1}, A_{2}, \ldots, A_{lmn}\}$. Since $A^{v_{1}+v_{2}+\cdots+v_{p}} = 0$ and $R$ is a semicommutative ring, so $B_{1}A_{j}^{v_{1}}B_{2}A_{j}^{v_{2}}\ldots B_{w}A_{j}^{v_{p}} = 0$. Therefore, $A_{s_{1}}A_{s_{2}}\ldots A_{s_{klmn}} = 0$ and hence $(\beta_{1}\alpha\beta_{2})^{klmn} = 0$. Thus, $N_{*}(R)[M]\subseteq N_{*}(R[M])$.\\
Conversely, let $\alpha = a_{1}g_{1}+a_{2}g_{2}+\cdots+a_{m}g_{m} \in N_{*}(R[M])$. Then for each $\beta_{1}, \beta_{2} \in R[M]$, $(\beta_{1}\alpha\beta_{2})\subseteq N(R[M])$. In the same way, we get $(\gamma_{1}\alpha\gamma_{2})^{s} = 0$, for each $\gamma_{1}, \gamma_{2} \in R[M]$, since $\alpha \in N_{*}(R[M])$. As above, if $(\beta_{1}\alpha\beta_{2})^{n} = (A_{1}t_{1}+A_{2}t_{2}+\cdots+A_{lmn}t_{lmn})^{n} = 0$. This implies $(\beta_{1}\alpha\beta_{2})^{n} \in N_{*}(R)[M]$, therefore $A_{i}^{n} \in N_{*}(R)$, since $R$ is lower nil $M$-Armendariz ring. By the above expression, $(A_{1})^{n} = (b_{1}a_{1}c_{1})^{n} \in N_{*}(R)\subseteq N(R)$. This implies that $b_{1}a_{1}c_{1} \in N(R)$ for each $b_{1}, c_{1} \in R$, thus $a_{1} \in N_{*}(R)$.  Similarly, $a_{i} \in N_{*}(R)$ for each $1\leq i \leq m$. Therefore, $N_{*}(R[M])\subseteq N_{*}(R)[M]$. Hence, $N_{*}(R[M]) = N_{*}(R)[M]$.

\end{proof}

\begin{pro}\label{pro3} Let $M$ be a monoid and $N$ a u.p. monoid. If $R$ is a semicommutative lower nil $M$-Armendariz ring, then $R[M]$ is a lower nil $N$-Armendariz ring.
\end{pro}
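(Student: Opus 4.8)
The plan is to reduce the statement to the $2$-primality of $R[M]$ and then invoke Proposition \ref{pro1}. Precisely, since $N$ is a u.p. monoid, Proposition \ref{pro1} tells us that every $2$-primal ring is lower nil $N$-Armendariz; hence it suffices to prove that the ring $S := R[M]$ is $2$-primal, i.e. that $N_{*}(S) = N(S)$, equivalently that $S/N_{*}(S)$ is reduced.

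First I would identify $N_{*}(R[M])$. Because $R$ is a semicommutative lower nil $M$-Armendariz ring, Proposition \ref{pro2} gives $N_{*}(R[M]) = N_{*}(R)[M]$. Writing $\overline{R} = R/N_{*}(R)$ and using that the kernel of the natural surjection $R[M] \to \overline{R}[M]$ is exactly $N_{*}(R)[M]$, I obtain a ring isomorphism
\[
R[M]/N_{*}(R[M]) \;=\; R[M]/N_{*}(R)[M] \;\cong\; \overline{R}[M].
\]
So the whole problem comes down to showing that $\overline{R}[M]$ is reduced.

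Next I would collect two facts about $\overline{R}$. Since $R$ is semicommutative it is $2$-primal (the implication already used in Corollary \ref{cor1}), so $\overline{R} = R/N_{*}(R)$ is reduced. Moreover, $R$ is lower nil $M$-Armendariz precisely when $\overline{R}$ is $M$-Armendariz (the equivalence quoted in the proof of Proposition \ref{pro6}). Thus $\overline{R}$ is a reduced $M$-Armendariz ring, and it remains to prove the following general claim: if a ring $T$ is reduced and $M$-Armendariz, then $T[M]$ is reduced. For this, take $f = \sum_{i} c_{i}g_{i} \in T[M]$ with $f^{2} = 0$; applying the $M$-Armendariz property to $\alpha = \beta = f$ yields $c_{i}c_{j} = 0$ for all $i, j$, in particular $c_{i}^{2} = 0$, whence $c_{i} = 0$ by reducedness and $f = 0$. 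Since a ring with no nonzero square-zero element is reduced, $T[M]$ is reduced.

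Assembling the pieces, $\overline{R}[M]$ is reduced, so $R[M]/N_{*}(R[M])$ is reduced and $R[M]$ is $2$-primal; Proposition \ref{pro1} then yields that $R[M]$ is lower nil $N$-Armendariz, as desired. I expect the main obstacle to be organizational rather than computational: the key move is recognizing that $2$-primality of $R[M]$ is the right target and that Proposition \ref{pro2} lets one replace $N_{*}(R[M])$ by $N_{*}(R)[M]$ so as to pass to $\overline{R}[M]$. The only genuinely new input is the short ``reduced plus $M$-Armendariz implies $T[M]$ reduced'' argument, which is where the $M$-Armendariz hypothesis (rather than mere u.p.-ness of $M$) is actually used; I would take care that this step is valid for the arbitrary monoid $M$ and does not secretly require $M$ to be a u.p. monoid.
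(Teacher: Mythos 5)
Your proof is correct, and its overall skeleton coincides with the paper's: both arguments aim to show that $R[M]$ is $2$-primal and then conclude by Proposition \ref{pro1} applied to the u.p. monoid $N$, with Proposition \ref{pro2} supplying the identification $N_{*}(R[M]) = N_{*}(R)[M]$. The difference lies in how $2$-primality of $R[M]$ is established. The paper first passes from lower nil $M$-Armendariz to nil $M$-Armendariz, cites Proposition 2.12 of \cite{AA} for the equality $N(R)[M] = N(R[M])$, and then chains $N(R[M]) = N(R)[M] = N_{*}(R)[M] = N_{*}(R[M])$ using $2$-primality of the semicommutative ring $R$. You instead work with the quotient directly: $R[M]/N_{*}(R[M]) \cong (R/N_{*}(R))[M]$, and you prove from scratch that a reduced $M$-Armendariz ring $T$ has reduced monoid ring $T[M]$ (via the square-zero argument applied to $\alpha = \beta = f$, which is valid for an arbitrary monoid $M$ since the Armendariz hypothesis is assumed outright). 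Your route is self-contained where the paper leans on an external citation, and it makes visible exactly where each hypothesis enters: semicommutativity gives reducedness of $R/N_{*}(R)$ and feeds Proposition \ref{pro2}, the lower nil $M$-Armendariz hypothesis gives the $M$-Armendariz property of the quotient, and the u.p. hypothesis on $N$ is used only in the final appeal to Proposition \ref{pro1}. The paper's version, by contrast, records the stronger intermediate fact $N(R)[M] = N(R[M])$, which it reuses elsewhere.
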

\begin{proof} Here, every lower nil $M$-Armendariz ring is a nil $M$-Armendariz ring. Moreover, by Proposition 2.12 of \cite{AA}, $N(R)[M] = N(R[M])$ and by Proposition \ref{pro2}, $N_{*}(R)[M] = N_{*}(R[M])$. Again, by semicommutative of $R$, we get $N(R[M]) = N(R)[M] = N_{*}(R)[M] = N_{*}(R[M])$. Hence, $R[M]$ is a $2$-primal ring. Finally, by Proposition \ref{pro1}, $R[M]$ is a lower nil $N$-Armendariz ring.
\end{proof}
\begin{thm} Let $M$ be a monoid and $N$ be a u.p. monoid. If $R$ is a semicommutative and lower nil $M$-Armendariz ring, then $R[N]$ is a lower nil $M$-Armendariz ring.
\end{thm}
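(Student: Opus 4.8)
The plan is to lean on the characterization used in the proof of Proposition~\ref{pro6} (Lemma $(3.1(c))$ of \cite{A}): a ring $A$ is lower nil $M$-Armendariz if and only if $A/N_*(A)$ is $M$-Armendariz. Applying this with $A=R[N]$, it suffices to show that $R[N]/N_*(R[N])$ is an $M$-Armendariz ring. First I would identify this quotient. Since $R$ is semicommutative it is $2$-primal, so $\overline{R}:=R/N_*(R)$ is reduced; and since $N$ is a u.p. monoid, Corollary~\ref{cor1} shows $R$ is lower nil $N$-Armendariz, whence Proposition~\ref{pro2} (applied with the monoid $N$) gives $N_*(R[N])=N_*(R)[N]$. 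Therefore
\begin{equation*}
R[N]/N_*(R[N]) = R[N]/\big(N_*(R)[N]\big) \cong \overline{R}[N].
\end{equation*}
As $R$ is lower nil $M$-Armendariz, $\overline{R}$ is itself $M$-Armendariz, and it is reduced; so the whole problem reduces to the following claim: if $S$ is a reduced $M$-Armendariz ring and $N$ is a u.p. monoid, then $S[N]$ is $M$-Armendariz (take $S=\overline{R}$).

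To prove the claim I would exploit the natural monoid-ring isomorphism $(S[N])[M]\cong S[N\times M]\cong (S[M])[N]$, under which a product vanishes on one side iff it vanishes on the other. Take $F=\sum_k\phi_k x_k$ and $G=\sum_l\psi_l y_l$ in $(S[N])[M]$ with $FG=0$, where $\phi_k,\psi_l\in S[N]$ and $x_k,y_l\in M$, and expand $\phi_k=\sum_a c_{ka}u_a$, $\psi_l=\sum_b d_{lb}v_b$ with $c_{ka},d_{lb}\in S$ and $u_a,v_b\in N$. Transporting $F,G$ to $(S[M])[N]$ produces $\widetilde F=\sum_a\Phi_a u_a$ and $\widetilde G=\sum_b\Psi_b v_b$, where $\Phi_a=\sum_k c_{ka}x_k$ and $\Psi_b=\sum_l d_{lb}y_l$ lie in $S[M]$, and $\widetilde F\,\widetilde G=0$.

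The engine is that $S[M]$ is reduced: if $\gamma=\sum_i c_i g_i\in S[M]$ has $\gamma^2=0$, then $M$-Armendarizness of $S$ forces $c_ic_j=0$ for all $i,j$, in particular $c_i^2=0$, so $c_i=0$ because $S$ is reduced, whence $\gamma=0$. Now $S[M]$ reduced together with $N$ a u.p. monoid makes $S[M]$ an $N$-Armendariz ring by Proposition $(1.1)$ of \cite{Z}, so $\widetilde F\,\widetilde G=0$ yields $\Phi_a\Psi_b=0$ for all $a,b$. Feeding each relation $\Phi_a\Psi_b=0$ back into the $M$-Armendariz property of $S$ gives $c_{ka}d_{lb}=0$ for all $k,l$, and hence $\phi_k\psi_l=\sum_{a,b}c_{ka}d_{lb}\,u_av_b=0$ for all $k,l$. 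Thus $S[N]$ is $M$-Armendariz, and the claim (hence the theorem) follows.

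The main obstacle is the claim in the last two paragraphs, namely transferring $M$-Armendarizness across the adjunction of the u.p. monoid $N$. The two delicate points are: recognising that reducedness of $S$ combined with $M$-Armendarizness \emph{upgrades} to reducedness of $S[M]$, which is exactly what lets the u.p. hypothesis on $N$ be invoked via \cite{Z}; and peeling the two monoid layers in the correct order through $(S[N])[M]\cong(S[M])[N]$, applying $N$-Armendarizness of $S[M]$ before $M$-Armendarizness of $S$. The earlier identification $N_*(R[N])=N_*(R)[N]$ from Proposition~\ref{pro2} is what legitimises the reduction to $\overline{R}[N]$ in the first place.
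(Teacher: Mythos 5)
Your argument is correct, and it takes a genuinely different route from the paper's. The paper works entirely upstairs in $R$: it transports an element of $N_{*}(R[N])[M]$ through the swap isomorphism $R[N][M]\cong R[M][N]$, uses Proposition \ref{pro2} (to get $N_{*}(R)[M]=N_{*}(R[M])$), Corollary \ref{cor1} and Theorem 3 of \cite{J} (to get $N_{*}(R)[N]=N_{*}(R[N])$) to rewrite the hypothesis as membership in $N_{*}(R[M])[N]$, and then peels off the $N$-layer and the $M$-layer using the lower nil Armendariz property at each stage. You instead pass to the reduced quotient $\overline{R}=R/N_{*}(R)$ at the outset, via the characterization (Lemma 3.1(c) of \cite{A}, already invoked in the paper's proof of Proposition \ref{pro6}) that a ring is lower nil $M$-Armendariz iff its quotient by the prime radical is $M$-Armendariz, and you reduce the whole theorem to the clean claim that $S[N]$ is $M$-Armendariz whenever $S$ is reduced and $M$-Armendariz and $N$ is u.p. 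The two proofs share the same skeleton --- the swap $[N][M]\cong[M][N]$, the u.p.\ hypothesis entering only through the $N$-layer, and the identification $N_{*}(R[N])=N_{*}(R)[N]$ that legitimises the quotient --- but your version replaces the prime-radical bookkeeping of Proposition \ref{pro2} for the monoid $M$ with the observation that $\overline{R}[M]$ is reduced (square an element and invoke $M$-Armendarizness of $\overline{R}$), which lets you quote Liu's Proposition 1.1 directly. This buys some transparency: the paper's step ``by Proposition \ref{pro2}, $(\sum_{i}a_{i_{r}}m_{i})(\sum_{j}a_{j_{s}}m^{'}_{j})\in N_{*}(R[M])$'' really requires $R[M]$ to be lower nil $N$-Armendariz (i.e.\ Proposition \ref{pro3}, not Proposition \ref{pro2}), a mild miscitation that your reduced-quotient formulation sidesteps entirely.
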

\begin{proof} First note that $\Phi : R[N][M] \rightarrow R[M][N]$ defined by $\phi(\sum_{i}(\sum_{j}a_{i_{j}}n_{j})m_{i}) = \sum_{j}(\sum_{i}a_{i_{j}}m_{i})\\n_{j}$ is a ring isomorphism. Now, suppose $(\sum_{i}\alpha_{i}m_{i})(\sum_{j}\beta_{j}m^{'}_{j}) \in N_{*}(R[N])[M]$, where $m_{i}$, $m^{'}_{j} \in M$ and $\alpha_{i}, \beta_{j} \in R[N]$ for each $i, j$. In order to prove $\alpha_{i}\beta_{j} \in N_{*}(R[N])$ for each $i, j$, let $\alpha_{i} = \sum_{r}a_{i_{r}}n_{r}$ and $\beta_{j} = \sum_{s}a_{j_{s}}n^{'}_{s} \in R[N]$. From Proposition \ref{pro2}, we have $N_{*}(R)[M] = N_{*}(R[M])$. Further, $R$ is a semicommutative ring and $N$ a u.p. monoid, so by Corollary \ref{cor1}, $R$ is a lower nil $N$-Armendariz ring. Again, by Theorem 3 of \cite{J}, we have $N_{*}(R)[N] = N_{*}(R[N])$. Hence, $(\sum_{r}(\sum_{i}a_{i_{r}}m_{i})n_{r})(\sum_{s}(\sum_{j}a_{j_{s}}m^{'}_{j})n^{'}_{s}) \in N_{*}(R[M])[N]$. Therefore, by Proposition \ref{pro2}, $(\sum_{i}a_{i_{r}}m_{i})(\sum_{j}a_{j_{s}}m^{'}_{j}) \in N_{*}(R[M]) = N_{*}(R)[M]$, for each $r$ and $s$. Since $R$ is a lower nil $M$-Armendariz ring, therefore $a_{i_{r}}b_{j_{s}} \in N_{*}(R)$ for each $i, j, r, s$, this implies $\alpha_{i}\beta _{j} \in N_{*}(R)[N] = N_{*}(R[N])$.  Thus, $R[N]$ is a lower nil $M$-Armendariz ring.
\end{proof}

A ring $R$ is said to be a Dedekind finite $(von~~Neumann-finite)$ if $ab = 1$ implies $ba = 1$ for each $a, b \in R$.
\begin{pro} Let $M$ be a cyclic group of order $n\geq 2$. Then each lower nil $M$-Armendariz ring is a Dedekind finite.
\end{pro}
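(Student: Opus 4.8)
The plan is to reduce the statement to Lemma \ref{lem2}, which already handles monoids possessing a nontrivial element of finite order. First I would observe that a cyclic group $M$ of order $n \geq 2$ is generated by some element $g$ whose order is exactly $n$; in particular $g \neq e$ and $g$ has finite order, so $M$ contains a nontrivial element of finite order. This is precisely the hypothesis required by Lemma \ref{lem2}.

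Next I would invoke Lemma \ref{lem2} directly. It asserts that if $M$ has a nontrivial element of finite order, then no ring $R$ with $0 \neq 1$ can be lower nil $M$-Armendariz. Taking the contrapositive, if $R$ is a lower nil $M$-Armendariz ring, then necessarily $0 = 1$ in $R$, which forces $R$ to be the zero ring $\{0\}$. Thus, for such an $M$, the entire class of lower nil $M$-Armendariz rings consists of the trivial ring alone.

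Finally, the zero ring is trivially Dedekind finite: the equation $ab = 1$ reads $0 = 0$, and $ba = 1$ holds for the same reason, so the defining implication is satisfied vacuously. Hence every lower nil $M$-Armendariz ring is Dedekind finite. I do not anticipate any genuine obstacle here; the only conceptual point is recognizing that Lemma \ref{lem2} collapses the hypothesis down to the trivial ring, so that the conclusion is essentially vacuous rather than something demanding an honest Dedekind-finiteness argument. If one wished to avoid the appeal to triviality, one would need a direct argument manufacturing a contradiction from a hypothetical nontrivial $R$ with $ab = 1 \neq ba$, but given Lemma \ref{lem2} the correct and shortest route is simply to note that no nontrivial example exists.
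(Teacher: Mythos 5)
Your argument is correct, but it takes a genuinely different route from the paper's. The paper proves the statement directly: assuming $R$ is not Dedekind finite, it invokes Proposition 5.5 of \cite{K} to produce an infinite set of matrix units and then exhibits $\alpha = E_{11}e+E_{12}g$ and $\beta=(E_{22}-E_{11})e+(E_{11}-E_{12})g$ in $R[M]$, claiming $\alpha\beta=0$ while a coefficient product fails to lie in $N_{*}(R)$. You instead observe that a cyclic group of order $n\geq 2$ automatically contains a nontrivial element of finite order, so Lemma \ref{lem2} applies and forces every lower nil $M$-Armendariz ring to satisfy $0=1$; the conclusion is then vacuous (the zero ring is trivially Dedekind finite). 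Your reduction is logically airtight and considerably shorter, and what it buys is the (somewhat deflating) insight that the proposition has no content for these monoids: by the paper's own Lemma \ref{lem2} the class being quantified over is essentially empty, so no honest Dedekind-finiteness argument is needed. The paper's direct approach would be the right one if the hypothesis on $M$ were weakened so that nontrivial examples exist, but as written it is also on shaky ground: a direct computation with the displayed $\alpha$ and $\beta$ gives $\alpha\beta=-E_{11}e+E_{11}g$, which is neither zero nor in $N_{*}(R)[M]$, so the paper's contradiction does not obviously materialize. On balance your route is the more defensible proof of the statement as literally stated.
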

\begin{proof} Let $R$ be a lower nil $M$-Armendariz ring and assume on the contrary $R$ is not a Dedekind finite. Then by Proposition $(5.5)$ of \cite{K}, $R$ contains an infinite set of matrix units, say $\{E_{11}, E_{12}, E_{13}, \ldots,E_{21}, E_{22}, E_{23},\ldots\}$. Consider the elements $\alpha = E_{11}e + E_{12}g$ and $\beta = (E_{22}-E_{11})e + (E_{11}-E_{12})g$ of $R[M]$. Then $\alpha\beta = 0$ but $(E_{11}-E_{12})$ is not a strongly nilpotent, which contradicts the assumption. Hence, $R$ is a Dedekind finite.
\end{proof}
\begin{thm}\label{thm2} Let $M$ be a monoid and $N$ a u. p. monoid. If $R$ is a semicommutative lower nil $M$-Armendariz ring, then $R$ is a lower nil $M\times N$-Armendariz ring.
\end{thm}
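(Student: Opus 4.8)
The plan is to realise $R[M\times N]$ as the iterated monoid ring $(R[M])[N]$ and then strip off the two Armendariz hypotheses one at a time: first the $N$-condition on the coefficient ring $R[M]$, and then the $M$-condition on $R$. Concretely, there is a ring isomorphism $\Psi : R[M\times N] \to (R[M])[N]$ obtained by grouping the coefficients according to the $N$-component, so that $\sum_{(m,n)} a_{m,n}(m,n)$ is sent to $\sum_{n}\big(\sum_{m} a_{m,n}\,m\big)\,n$. I would first record how $\Psi$ transports the relevant ideals. Since $R$ is a semicommutative lower nil $M$-Armendariz ring, Proposition \ref{pro2} gives $N_{*}(R)[M] = N_{*}(R[M])$, and from this it follows that $\Psi$ carries $N_{*}(R)[M\times N]$ onto $N_{*}(R[M])[N]$.

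Now take $\alpha,\beta\in R[M\times N]$ with $\alpha\beta\in N_{*}(R)[M\times N]$, and write $\Psi(\alpha)=\sum_{n}\alpha_{n}n$ and $\Psi(\beta)=\sum_{n'}\beta_{n'}n'$ with $\alpha_{n}=\sum_{m}a_{m,n}m$ and $\beta_{n'}=\sum_{m'}b_{m',n'}m'$ in $R[M]$. By the previous step, $\Psi(\alpha)\Psi(\beta)\in N_{*}(R[M])[N]$. By Proposition \ref{pro3}, the standing hypotheses ($R$ semicommutative lower nil $M$-Armendariz, $N$ a u.p. monoid) guarantee that $R[M]$ is a lower nil $N$-Armendariz ring, so I may conclude $\alpha_{n}\beta_{n'}\in N_{*}(R[M])$ for all $n,n'$. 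Applying Proposition \ref{pro2} once more rewrites this as $\alpha_{n}\beta_{n'}\in N_{*}(R)[M]$.

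It then remains to descend from $R[M]$ to $R$. For fixed $n,n'$ the element $\alpha_{n}\beta_{n'}$ is a product of two members of $R[M]$ lying in $N_{*}(R)[M]$, so the lower nil $M$-Armendariz property of $R$ yields $a_{m,n}b_{m',n'}\in N_{*}(R)$ for all $m,m'$. As $n,n'$ range over $N$ and $m,m'$ over $M$, the elements $a_{m,n}b_{m',n'}$ are precisely the products of an arbitrary coefficient of $\alpha$ with an arbitrary coefficient of $\beta$ regarded in $R[M\times N]$, so every such product lies in $N_{*}(R)$; hence $R$ is lower nil $M\times N$-Armendariz. The step demanding the most care is the ideal bookkeeping in the first paragraph: one must verify that $\Psi$ really matches $N_{*}(R)[M\times N]$ with $N_{*}(R[M])[N]$, and this is exactly the point at which the equality $N_{*}(R)[M]=N_{*}(R[M])$ of Proposition \ref{pro2} becomes indispensable, for without it the two Armendariz reductions would not chain together.
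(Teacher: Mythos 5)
Your proof is correct and follows essentially the same route as the paper: the published argument likewise passes through the isomorphism $R[M\times N]\cong R[M][N]$, invokes Proposition \ref{pro2} to identify $N_{*}(R)[M]$ with $N_{*}(R[M])$, and then defers to Proposition \ref{pro3} to make $R[M]$ lower nil $N$-Armendariz before descending to $R$. Your write-up simply supplies the coefficient-chasing details that the paper compresses into the sentence ``Rest part of the proof follows Proposition \ref{pro3}.''
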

\begin{proof} By Theorem (2.3) of \cite{Z}, we have $R[M\times N] \cong R[M][N]$ and by Proposition \ref{pro2}, $N_{*}(R)[M] = N_{*}(R[M])$. Rest part of the proof follows Proposition \ref{pro3}.
\end{proof}
Let $M_{i}$, $i \in I$, be monoids for index set $I$. Denote $\bigsqcup_{i \in I} M_{i} = \{(h_{i})_{i \in I} \mid$ there exist only finite $i's$ such that $h_{i} \neq e_{i},$ the identity of $M_{i}\}$. Then $\bigsqcup_{i \in I}M_{i}$ is a monoid under the binary operation $(h_{i})_{i \in I}(h_{i}^{'})_{i \in I} = (h_{i}h_{i}^{'})_{i \in I}$.
\begin{cor} Let $M_{i}, i \in I$ be u.p. monoids and $R$ a semicommutative ring. If $R$ is a lower nil $M_{i_{0}}$-Armendariz ring for some $i_{0} \in I$, then $R$ is a lower nil $(\bigsqcup_{i \in I}M_{i})$-Armendariz ring.
\end{cor}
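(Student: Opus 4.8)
The plan is to reduce the lower nil $(\bigsqcup_{i \in I}M_{i})$-Armendariz condition to a \emph{finite} direct product of the $M_{i}$, and then build that finite product up one factor at a time by repeated application of Theorem \ref{thm2}. The point is that although $I$ may be infinite, any single instance of the Armendariz test involves only finitely much data, so it is really taking place inside a finite sub-product.

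First I would fix arbitrary elements $\alpha = \sum_{k}a_{k}g_{k}$ and $\beta = \sum_{l}b_{l}h_{l}$ in $R[\bigsqcup_{i \in I}M_{i}]$ with $\alpha\beta \in N_{*}(R)[\bigsqcup_{i \in I}M_{i}]$. By definition of $\bigsqcup_{i \in I}M_{i}$ each $g_{k}$ and each $h_{l}$ has only finitely many coordinates different from the identity $e_{i}$, and there are only finitely many such $g_{k}, h_{l}$; hence there is a finite subset $J \subseteq I$ such that all $g_{k}, h_{l}$ lie in $\prod_{i \in J}M_{i}$, which is naturally identified with the submonoid $\bigsqcup_{i \in J}M_{i}$ of $\bigsqcup_{i \in I}M_{i}$. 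Consequently $\alpha, \beta \in R[\prod_{i \in J}M_{i}]$, and since the support of $\alpha\beta$ is contained in the support of $\alpha$ times that of $\beta$, we also have $\alpha\beta \in N_{*}(R)[\prod_{i \in J}M_{i}]$.

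Next I would prove by induction on $|J|$ that $R$ is a lower nil $(\prod_{i \in J}M_{i})$-Armendariz ring for every finite $J \subseteq I$. For $|J| = 1$, say $J = \{j\}$, Corollary \ref{cor1} applies: $R$ is semicommutative and $M_{j}$ is a u.p. monoid, so $R$ is lower nil $M_{j}$-Armendariz. For the inductive step write $J = J' \sqcup \{j\}$, so that $\prod_{i \in J}M_{i} \cong (\prod_{i \in J'}M_{i}) \times M_{j}$. By the inductive hypothesis $R$ is lower nil $(\prod_{i \in J'}M_{i})$-Armendariz, and $R$ is semicommutative; since $M_{j}$ is a u.p. monoid, Theorem \ref{thm2} gives that $R$ is lower nil $((\prod_{i \in J'}M_{i}) \times M_{j})$-Armendariz, i.e. lower nil $(\prod_{i \in J}M_{i})$-Armendariz. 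Note that no hypothesis beyond semicommutativity and the u.p. property of the $M_{i}$ is actually used here, so the assumption that $R$ be lower nil $M_{i_{0}}$-Armendariz is subsumed.

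Finally, applying this to the finite set $J$ produced in the first step, the relation $\alpha\beta \in N_{*}(R)[\prod_{i \in J}M_{i}]$ forces $a_{k}b_{l} \in N_{*}(R)$ for all $k, l$, which is exactly what is required for $R$ to be lower nil $(\bigsqcup_{i \in I}M_{i})$-Armendariz. The only genuinely substantive step is the first reduction: one must confirm that the finite-support description of $\bigsqcup_{i \in I}M_{i}$ really does trap a fixed pair $\alpha, \beta$ (and therefore $\alpha\beta$) inside a monoid ring over a finite product $\prod_{i \in J}M_{i}$, so that the finitary conclusion of the inductive step transfers back verbatim. Once that identification is in place, the iteration is a clean, purely formal invocation of Theorem \ref{thm2}.
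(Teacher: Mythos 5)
Your proof is correct and follows essentially the same route as the paper: reduce to a finite sub-product $\prod_{i\in J}M_{i}$ using the finite supports of $\alpha$ and $\beta$, then build that product up one u.p.\ factor at a time by iterating Theorem \ref{thm2}. The only (harmless) difference is that you anchor the induction at Corollary \ref{cor1} rather than at the hypothesized lower nil $M_{i_{0}}$-Armendariz property, which correctly exposes that hypothesis as redundant given semicommutativity; the paper instead starts from $M_{i_{0}}$ and adjoins the remaining factors.
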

\begin{proof} Let $\alpha = \sum_{i}a_{i}g_{i}$, $\beta = \sum_{j}b_{j}h_{j} \in R[\bigsqcup_{i \in I}M_{i}]$ such that $\alpha \beta \in N_{*}(R)[\bigsqcup_{i \in I}M_{i}]$. Then $\alpha, \beta \in R[M_{i_{0}}\times M_{1}\times M_{2}\times\cdots \times M_{n}]$ for some finite subset $\{M_{1}, M_{2}, \ldots, M_{n}\}\subseteq \{M_{i} : i \in I\}$. From Theorem \ref{thm2} and by applying induction, the ring $R$ is a lower nil $(M_{i_{0}}\times M_{1} \times M_{2} \times \cdots \times M_{n})$-Armendariz ring, therefore $a_{i}b_{j} \in N_{*}(R)$. Hence, $R$ is a lower nil $\bigsqcup_{i \in I}M_{i}$-Armendariz ring.
\end{proof}
\begin{thm} The classes of lower nil $M$-Armendariz rings are closed under direct limit.
\end{thm}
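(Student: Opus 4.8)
The plan is to present the direct limit as $R=\varinjlim_{i\in I}R_i$ for a directed system $\{R_i,\varphi_{ij}\}$ of lower nil $M$-Armendariz rings, with canonical maps $\varphi_i\colon R_i\to R$, and to push the defining condition down to a single finite stage. Given $\alpha=a_1g_1+\cdots+a_ng_n$ and $\beta=b_1h_1+\cdots+b_mh_m$ in $R[M]$ with $\alpha\beta\in N_*(R)[M]$, only finitely many elements of $R$ occur as coefficients of $\alpha$, $\beta$ and of $\alpha\beta$. Since $R=\bigcup_i\varphi_i(R_i)$ and $I$ is directed, I would first fix one index $k$ together with preimages $\tilde\alpha=\sum_i\tilde a_ig_i$ and $\tilde\beta=\sum_j\tilde b_jh_j$ in $R_k[M]$ such that $\varphi_k(\tilde\alpha)=\alpha$ and $\varphi_k(\tilde\beta)=\beta$, where $\varphi_k$ is applied coefficientwise.

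The heart of the argument is the compatibility of the prime radical with direct limits, namely $N_*(R)=\varinjlim_i N_*(R_i)$. One inclusion is immediate: for any prime ideal $P$ of $R$ the preimage $\varphi_i^{-1}(P)$ is a prime ideal of $R_i$, so $N_*(R_i)\subseteq\varphi_i^{-1}(P)$; intersecting over all primes $P$ of $R$ gives $\varphi_i(N_*(R_i))\subseteq N_*(R)$ for every $i$. The reverse inclusion is the delicate part: an element of $N_*(R)$ represented by $c\in R_k$ must descend, after passing to some $R_\ell$ with $\ell\ge k$, into $N_*(R_\ell)$. I would establish this through the Levitzki characterization of the prime radical as the set of strongly nilpotent elements (those $x$ for which every sequence $x_0=x$, $x_{t+1}\in x_tRx_t$ eventually vanishes): if $\varphi_{k\ell}(c)$ failed to be strongly nilpotent in $R_\ell$ for cofinally many $\ell$, one would extract a non-vanishing such sequence at a finite stage whose image in $R$ contradicts $\varphi_k(c)\in N_*(R)$.

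The main obstacle is precisely this descent step, and it is completely transparent when the transition maps $\varphi_{ij}$ are injective, i.e. when $R$ is the union of a directed family of subrings $R_i$: there a term of a sequence $x_{t+1}\in x_tR_\ell x_t$ vanishes in $R_\ell$ iff it vanishes in $R$, so $N_*(R)\cap R_\ell=N_*(R_\ell)$ at once and no cofinality argument is needed. For a general directed system the point is to control the finitely many primes of $R_\ell$ that do not arise by pullback from $R$, as these are the ones that eventually absorb $c$. Granting the descent, I would apply it simultaneously to the finitely many coefficients of $\tilde\alpha\tilde\beta$: each of them lands in $N_*(R)$ after $\varphi_k$, so by directedness there is a single index $\ell\ge k$ with $\varphi_{k\ell}(\tilde\alpha)\,\varphi_{k\ell}(\tilde\beta)\in N_*(R_\ell)[M]$.

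Finally, since $R_\ell$ is lower nil $M$-Armendariz, the coefficientwise products satisfy $\varphi_{k\ell}(\tilde a_i)\,\varphi_{k\ell}(\tilde b_j)\in N_*(R_\ell)$ for all $i,j$, and applying $\varphi_\ell$ together with the easy inclusion $\varphi_\ell(N_*(R_\ell))\subseteq N_*(R)$ yields $a_ib_j\in N_*(R)$, as required. Equivalently, the same data can be organized through the characterization that $R$ is lower nil $M$-Armendariz if and only if $R/N_*(R)$ is $M$-Armendariz (Lemma $3.1(c)$ of \cite{A}): the compatibility $N_*(R)=\varinjlim_i N_*(R_i)$ gives an isomorphism $R/N_*(R)\cong\varinjlim_i\bigl(R_i/N_*(R_i)\bigr)$, each factor is $M$-Armendariz, and a routine finitary argument (a relation $\gamma\delta=0$ in the limit already holds at some finite stage) shows that a direct limit of $M$-Armendariz rings is again $M$-Armendariz.
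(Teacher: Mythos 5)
Your strategy coincides with the paper's: realize the finitely many coefficients of $\alpha$ and $\beta$ at a single stage $R_s$, apply the lower nil $M$-Armendariz hypothesis there, and push the conclusion back up to $R$. You also correctly identify where all the content lies, namely in the two transfers $\alpha\beta\in N_*(R)[M]\Rightarrow \tilde\alpha\tilde\beta\in N_*(R_s)[M]$ (descent) and $\tilde a_i\tilde b_j\in N_*(R_s)\Rightarrow a_ib_j\in N_*(R)$ (ascent); the paper's own proof simply writes ``hence $\alpha\beta\in N_*(R_s)[M]$'' and ``therefore $a_pb_q\in N_*(R)$'' with no argument at all.

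There is, however, a genuine error in the step you call immediate. For noncommutative rings the preimage of a prime ideal under a non-surjective homomorphism need not be prime, so the argument ``$\varphi_i^{-1}(P)$ is prime, hence contains $N_*(R_i)$'' does not work. Concretely, $0$ is a prime ideal of $M_2(\mathbb{Z})$, but its preimage under the inclusion $T_2(\mathbb{Z})\hookrightarrow M_2(\mathbb{Z})$ is the zero ideal of $T_2(\mathbb{Z})$, which is not prime (nor even semiprime), since $E_{12}\,T_2(\mathbb{Z})\,E_{12}=0$ while $E_{12}\neq 0$; correspondingly $E_{12}\in N_*(T_2(\mathbb{Z}))$ but $E_{12}\notin N_*(M_2(\mathbb{Z}))$. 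So the inclusion $\varphi_i(N_*(R_i))\subseteq N_*(R)$ is not automatic even for injective transition maps: it is exactly as delicate as the descent you flag, and in your write-up neither direction is actually established (the Levitzki/cofinality sketch for the descent is announced but not carried out). To be clear, the published proof has the same holes and does not even name them, so you are not behind it on the descent; but the reason you give for the ascent is false as stated and must be replaced by an argument that genuinely uses the structure of the system --- for instance, the assumption that the transition maps carry $N_*(R_i)$ into $N_*(R_j)$, or that the limit is a directed union along inclusions compatible with the lower nilradicals, as in the paper's Example 2.2.
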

\begin{proof} Let $D = \{R_{i}, \alpha_{ij}\}$ be a direct system of lower nil $M$-Armendariz rings $R_{i}$ for $i \in I$ and ring homomorphisms $\alpha_{ij} : R_{i}\rightarrow R_{j}$ for each $i \leq j$ satisfying $\alpha_{ij}(1) = 1$, where $I$ is a directed partially ordered set. Let $R = \underrightarrow{lim}R_{i}$  be the direct limit of $D$ with $l_{i} : R_{i}\rightarrow R$ and $l_{i}\alpha_{ij} = l_{i}$. We will prove $R$ is lower nil $M$-Armendariz ring. If we take $a, b \in R$, then $a = l_{i}(a_{i})$, $b = l_{j}(b_{j})$ for some $i, j \in I$ and there is $s \in I$ such that $i\leq s$, $j \leq n$. Define
\begin{eqnarray*}
a+b = l_{s}(\alpha_{is}(a_{i})+\alpha_{js}(b_{j})) ~~~and~~~ ab = l_{s}(\alpha_{is}(a_{i})\alpha_{js}(b_{j}))
\end{eqnarray*}
where $\alpha_{is}(a_{i})$, $\alpha_{js}(b_{j}) \in R_{s}$. Then $R$ forms a ring with $l_{i}(0) = 0$ and $l_{i}(1) = 1$. Now, let $\alpha \beta \in N_{*}(R)[M]$ for $\alpha = \sum_{p=1}^{m}a_{p}g_{p}$ and $\beta = \sum_{q=1}^{n}b_{q}h_{q} \in R[M]$. There are $i_{p}, j_{q}, s \in I$ such that $a_{p} = l_{i_{p}}(a_{i_{p}})$, $b_{q} = l_{i_{q}}(b_{j_{q}})$, $i_{p}\leq s$, $j_{q}\leq s$ and hence $\alpha \beta \in N_{*}(R_{s})[M]$. Since $R_{s}$ is a lower nil $M$-Armendariz, $l_{s}(\alpha_{i_{p}s}(a_{i})\alpha_{j_{q}s}(b_{j})) \in N_{*}(R_{s})$ and therefore $a_{p}b_{q} \in N_{*}(R)$. Thus, $R$ is a lower nil $M$-Armendariz ring.
\end{proof}

\begin{thm}
Let $M$ be a strictly totally ordered monoid and $I$ be an ideal of the ring $R$. If $I$ is semicommutative ideal and $R/I$ is a lower nil $M$-Armendariz ring, then $R$ is a lower nil $M$-Armendariz.
\end{thm}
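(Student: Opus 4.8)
The plan is to exploit the strict total order on $M$ to pin down certain coefficients unambiguously, and to transfer information between $R$ and $R/I$ through the prime spectrum. First I would use the total order to fix an indexing $g_1 < g_2 < \cdots < g_n$ and $h_1 < h_2 < \cdots < h_m$, so that $g_1h_1$ is the unique least and $g_nh_m$ the unique greatest among all products $g_ih_j$; recall also that a strictly totally ordered monoid is a u.p. monoid, so Proposition \ref{pro1} and Corollary \ref{cor1} are available. Assume $\alpha\beta \in N_{*}(R)[M]$. Writing $\pi : R \to R/I$ for the canonical surjection, the standard fact that $\pi(N_{*}(R)) \subseteq N_{*}(R/I)$ (preimages of primes of $R/I$ are primes of $R$ containing $I$, each of which contains $N_{*}(R)$) gives $\overline{\alpha}\,\overline{\beta} = \overline{\alpha\beta} \in N_{*}(R/I)[M]$. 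Since $R/I$ is lower nil $M$-Armendariz, I obtain $\overline{a_i}\,\overline{b_j} \in N_{*}(R/I)$, i.e.\ $a_ib_j \in J$ for all $i,j$, where $J := \pi^{-1}(N_{*}(R/I)) = \bigcap\{P : P \text{ prime}, I \subseteq P\} \supseteq I$.

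It then suffices to prove $a_ib_j \in N_{*}(R) = \bigcap\{P : P \text{ prime}\}$, so I would fix an arbitrary prime $P$ and show $a_ib_j \in P$. If $I \subseteq P$, then $P$ is one of the primes whose intersection defines $J$, hence $J \subseteq P$ and we are done immediately from $a_ib_j \in J$. The substantive case is $I \not\subseteq P$. Passing to the prime ring $R/P$ and writing $\widetilde{\alpha},\widetilde{\beta}$ for the images, the inclusion $\alpha\beta \in N_{*}(R)[M] \subseteq P[M]$ yields $\widetilde{\alpha}\,\widetilde{\beta} = 0$ in $(R/P)[M]$, where the image $\widetilde{I}$ of $I$ is now a \emph{nonzero} ideal. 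Here the coefficient of the least element $g_1h_1$ is exactly $\widetilde{a_1}\widetilde{b_1}$ (no other index pair produces $g_1h_1$, by strictness), forcing $a_1b_1 \in P$, and symmetrically $a_nb_m \in P$; the aim is to peel off the remaining products in increasing monoid order, at each stage combining the vanishing coefficient equation with the semicommutativity relations carried by $\widetilde I$ to conclude $a_ib_j \in P$ for the next index pair, exactly as in the reduced-ring argument behind Proposition \ref{pro1}.

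The main obstacle is precisely this case $I \not\subseteq P$, and two difficulties are intertwined. First, because $M$ is only strictly totally ordered, several index pairs may satisfy $g_ih_j = g_{i'}h_{j'}$, so the equations coming from $\widetilde{\alpha}\,\widetilde{\beta}=0$ mix several products $a_ib_j$ and must be disentangled by an induction along the order. Second, and more seriously, $R/P$ itself need not be semicommutative or $M$-Armendariz, so the inductive step cannot rest on $R/P$ alone; the crux is to feed the semicommutativity of $I$ (surviving as a nonzero semicommutative ideal of the prime ring $R/P$) into the cascade, letting it play the role that reducedness plays in Proposition \ref{pro1}. Making this substitution rigorous — showing that $\widetilde{I}$ supplies enough annihilator control to separate coincident products and drive the induction to $a_ib_j \in P$ — is where the real work lies; once it is done, intersecting over all primes $P$ gives $a_ib_j \in N_{*}(R)$, so $R$ is lower nil $M$-Armendariz.
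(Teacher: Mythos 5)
Your opening reductions are fine: $\overline{\alpha}\,\overline{\beta}\in N_{*}(R/I)[M]$ gives $a_{i}b_{j}\in\pi^{-1}(N_{*}(R/I))$, hence $a_{i}b_{j}\in P$ for every prime $P\supseteq I$, and the strict order forces the coefficient of the least product $g_{1}h_{1}$ to be exactly $a_{1}b_{1}$, so $a_{1}b_{1}\in N_{*}(R)$. But the argument stops precisely where the theorem becomes nontrivial, and the framework you choose for the remaining case is itself flawed. For a prime $P$ with $I\not\subseteq P$ you plan to work in $R/P$ using ``the semicommutativity of $I$, surviving as a nonzero semicommutative ideal of the prime ring $R/P$.'' Semicommutativity is not preserved by homomorphic images, so the image $(I+P)/P\cong I/(I\cap P)$ need not be semicommutative; the hypothesis on which your whole case rests is unjustified. (It is also suspiciously strong: a nonzero semicommutative ideal of a prime ring is a prime semicommutative ring, hence a domain, which would make the case nearly trivial.) Beyond that, you never actually carry out the disentangling of coincident products: for each $w$ the relation $\widetilde{\alpha}\widetilde{\beta}=0$ only yields $\sum_{g_{i}h_{j}=w}\widetilde{a_{i}}\widetilde{b_{j}}=0$, and you explicitly defer ``making this substitution rigorous'' --- but that deferred step is the entire content of the proof.

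For comparison, the paper never reduces modulo individual primes. It runs a transfinite induction on $w\in(M,\leq)$ inside $R$ itself: from $\overline{a_{i}}\,\overline{b_{j}}\in N_{*}(R/I)$ it extracts an exponent $s$ controlling $(r_{1}a_{i}b_{j}r_{2})^{s}$ for all $r_{1},r_{2}\in R$, lists the pairs with $g_{i_{t}}h_{j_{t}}=w$ so that $\sum_{t=1}^{k}a_{i_{t}}b_{j_{t}}\in N_{*}(R)$, and then uses the induction hypothesis $a_{i_{1}}b_{j_{t}}\in N_{*}(R)$ for $t\geq 2$ together with two consequences of the semicommutativity of $I$ --- that arbitrary factors may be inserted into vanishing products, and that $N(I)$ is an ideal of $I$ so the nilpotent contributions can be summed --- to peel off the summands one at a time and conclude $(r_{1}a_{i_{t}}b_{j_{t}}r_{2})^{s+2}\in N(I)$ for all $r_{1},r_{2}$, whence $a_{i_{t}}b_{j_{t}}\in N_{*}(R)$. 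If you wish to keep your prime-by-prime scheme you must find a substitute for the semicommutativity of $I$ that is visible in $R/P$; as written the proposal has a genuine gap.
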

\begin{proof}
Let $\alpha = \sum_{i=1}^{m}a_{i}g_{i}$ and $\beta = \sum_{j=1}^{n}b_{j}h_{j}$ in $R[M]$ with $\alpha \beta \in N_{*}(R)[M]$ where $g_{1} < g_{2} < g_{3}\cdots g_{m},$ $h_{1} < h_{2} < h_{3}< \cdots h_{n}$. Then $\sum_{g_{i}h_{j} = w}a_{i}b_{j} \in N_{*}(R)$. Now, we use transfinite induction on the strictly totally ordered set $(M, \leq)$ to prove $a_{i}b_{j} \in N_{*}(R)$, for each $i,j$. Clearly, $\overline{\alpha}\overline{\beta} \in N_{*}(R/I)[M]$. Since $R/I$ is a lower nil $M$-Armendariz ring, so there exists a positive integer $s$ such that $(r_{1}a_{i}b_{j}r_{2})^{s} \in N(R)$ for each $i, j$ and $r_{1}, r_{2} \in R$. Also, $g_{1}h_{1} < g_{i}h_{i}$ if $i \neq 1$ or $j \neq 1$. This implies that $a_{1}b_{1} \in N_{*}(R)$. Now, suppose $a_{i}b_{j} \in N_{*}(R)$, for any $g_{i}$ and $h_{j}$ with $g_{i}h_{j} < w$. In order to prove $a_{i}b_{j} \in N_{*}(R)$, for each $i$, $j$ and $g_{i}h_{j} = w$, consider $X = \{(g_{i}, h_{j}) | g_{i}h_{j} = w\}$. Then $X$ is a finite set. We write $X$ as $\{(g_{i_{t}}, h_{j_{t}})| t = 1, 2, \ldots, k\}$ such that $g_{i_{1}} < g_{i_{2}} < \cdots < g_{i_{k}}$. Since $M$ is a cancellative monoid, $g_{i_{1}} = g_{i_{2}}$ and $g_{i_{1}}h_{j_{1}} = g_{i_{2}}h_{j_{2}} = w$ imply that $h_{j_{1}} = h_{j_{2}}$. Also, since $(M, \leq)$ is a strictly totally ordered monoid, $g_{i_{1}} < g_{i_{2}}$ and $g_{i_{1}}h_{j_{1}} = g_{i_{2}}h_{j_{2}} = w$ imply $h_{j_{2}} < h_{j_{1}}$. So, we have $h_{j_{k}} < \cdots < h_{j_{2}} < h_{j_{1}}$. Now, we have
\begin{eqnarray*}
\sum_{g_{i}h_{j} = w}a_{i}b_{j} = \sum_{(g_{i}, h_{j}) \in X}a_{i}b_{j} = \sum_{t=1}^{k}a_{i_{t}}b_{j_{t}} \in N_{*}(R)\subseteq N(R).
\end{eqnarray*}
\begin{eqnarray}
\sum_{g_{i}h_{j} = w}r_{1}a_{i}b_{j}r_{2} = \sum_{(g_{i}, h_{j}) \in X}r_{1}a_{i}b_{j}r_{2} = \sum_{t=1}^{k}r_{1}a_{i_{t}}b_{j_{t}}r_{2} \in N(R) ~for~ any~ r_{1}, r_{2} \in R.
\end{eqnarray}
Note that, $g_{i_{1}}h_{j_{t}} < g_{i_{t}}h_{j_{t}} = w$ for any $t \geq 2$ and by induction hypothesis, we have $a_{i_{1}}b_{j_{2}} \in N_{*}(R)$. Consider $(a_{i_{1}}b_{j_{2}})^{p} = 0$ for some integer $p$. Then $(b_{j_{2}}a_{i_{1}})^{p+1} = 0$. Therefore,
\begin{eqnarray*}
((r_{1}a_{i_{2}}b_{j_{2}}r_{2})(r_{1}a_{i_{1}}b_{j_{1}}r_{2})^{s+1}r_{1}a_{i_{2}})(b_{j_{2}}a_{i_{1}})^{p+1}(b_{j_{2}}a_{i_{1}}(r_{1}a_{i_{1}}b_{j_{1}}r_{2})^{s+1}) = 0.
\end{eqnarray*}
\begin{eqnarray*}
((r_{1}a_{i_{2}}b_{j_{2}}r_{2})(r_{1}a_{i_{1}}b_{j_{1}}r_{2})^{s+1}r_{1}a_{i_{2}})(b_{j_{2}}a_{i_{1}})(b_{j_{2}}a_{i_{1}})^{p}(b_{j_{2}}a_{i_{1}}(r_{1}a_{i_{1}}b_{j_{1}}r_{2})^{s+1}) = 0.
\end{eqnarray*}
\begin{eqnarray*}
((r_{1}a_{i_{2}}b_{j_{2}}r_{2})(r_{1}a_{i_{1}}b_{j_{1}}r_{2})^{s+1}r_{1}a_{i_{2}})(b_{j_{2}})(r_{1}(r_{1}a_{i_{1}}b_{j_{1}}r_{2})^{s+1}r_{2})(a_{i_{1}})(b_{j_{1}}r_{2}(r_{1})a_{i_{2}}b_{j_{2}}r_{2})\\(r_{1}a_{i_{1}}b_{j_{1}}r_{2})^{s+1}r_{1}a_{i_{2}})(b_{j_{2}}a_{i_{1}})^{p}(b_{j_{2}}a_{i_{1}}(r_{1}a_{i_{1}}b_{j_{1}}r_{2})^{s+1}) = 0.
\end{eqnarray*}
This implies
\begin{eqnarray*}
[(r_{1}a_{i_{2}}b_{j_{2}}r_{2})(r_{1}a_{i_{1}}b_{j_{1}}r_{2})^{s+1}]^{2}[(r_{1}a_{i_{2}}b_{j_{2}}r_{2})(r_{1}a_{i_{1}}b_{j_{1}}r_{2})^{s+1}r_{1}a_{i_{2}}](b_{j_{2}})(r_{1}(r_{1}a_{i_{1}}b_{j_{1}}r_{2})^{s+1}r_{2})(a_{i_{1}})\\(b_{j_{1}}r_{2}(r_{1})a_{i_{2}}b_{j_{2}}r_{2})(r_{1}a_{i_{1}}b_{j_{1}}r_{2})^{s+1}r_{1}a_{i_{2}})(b_{j_{2}}a_{i_{1}})^{p-1}(b_{j_{2}}a_{i_{1}}(r_{1}a_{i_{1}}b_{j_{1}}r_{2})^{s+1}) = 0.
\end{eqnarray*}
Continuing above procedure, we get
\begin{eqnarray*}
[(r_{1}a_{i_{2}}b_{j_{2}}r_{2})(r_{1}a_{i_{1}}b_{j_{1}}r_{2})^{s+1})]^{2n+2} = 0.
\end{eqnarray*}
This implies that $(r_{1}a_{i_{2}}b_{j_{2}}r_{2})(r_{1}a_{i_{1}}b_{j_{1}}r_{2})^{s+1} \in N(I)$. Similarly, we can see that $(r_{1}a_{i_{t}}b_{j_{t}}r_{2})\\(r_{1}a_{i_{1}}b_{j_{1}}r_{2})^{s+1} \in N(I)$ for $3 \leq t \leq k$. Since $I$ is a semicommutative ideal, so $N(I)$ is an ideal of $I$, therefore $\sum_{t=2}^{k}(r_{1}a_{i_{2}}b_{j_{2}}r_{2})(r_{1}a_{i_{1}}b_{j_{1}}r_{2})^{s+1} \in N(I)$. On the other hand, multiplying by $(r_{1}a_{i_{1}}b_{j_{1}}r_{2})^{s+1}$ from right in equation $(2.2)$, we have $\sum_{t=1}^{k}(r_{1}a_{i_{t}}b_{j_{t}}r_{2})(r_{1}a_{i_{1}}b_{j_{1}}r_{2})^{s+1} \in N(I)$, so $(r_{1}a_{i_{1}}b_{j_{1}}r_{2})^{s+2} \in N(I)\subseteq N(R)$ and hence, $a_{i_{1}}b_{j_{1}} \in N_{*}(R)$. Continuing this process, we get $a_{i_{t}}b_{j_{t}} \in N_{*}(R)$ for $1\leq t \leq k$. So, $a_{i}b_{j} \in N_{*}(R)$ for each $i, j$ with $g_{i}h_{j} = w$. Thus, $a_{i}b_{j} \in N_{*}(R)$ for each $i, j$.
\end{proof}

Let $R$ be a ring with an endomorphism $\alpha$ such that $\alpha(1) = 1$. Chen et al. in \cite{JJJ}, considered the skew upper triangular matrix ring as a set of all upper triangular matrices with operations usual addition of matrices and multiplication subjected to the condition $E_{ij}r = \alpha^{j-i}(r)E_{ij}$, i.e. for any two matrices $(a_{ij})$ and$(b_{ij})$, we have $(a_{ij})(b_{ij}) = (c_{ij})$, where $c_{ij} = a_{ij}b_{ij}+a_{i, i+1}\alpha(b_{i+1, j})+\cdots+a_{ij}\alpha^{j-i}(b_{jj})$ for each $i\leq j$ and it is denoted by $T_{n}(R, \alpha)$.\\
It is noted that $N_{*}(T_{n}(R, \alpha)) = (N_{*}(R), R,\ldots, R)$\\
The subring of the skew triangular matrices with constant main diagonal is denoted by $S(R, n, \alpha)$. Also, the subring of skew triangular matrices with constant diagonals is denoted by $T(R, n, \alpha)$. We can denote $A = (a_{ij}) \in T(R, n, \alpha)$ by $(a_{11}, a_{12}, \ldots, a_{1n})$. Then $T(R, n, \alpha)$ is a ring with addition is pointwise and multiplication given by
\begin{eqnarray*}
(a_{0}, a_{1}, \ldots, a_{n-1})(b_{0}, b_{1}, \ldots, b_{n-1}) = (a_{0}b_{0}, a_{0}\ast b_{1}+a_{1}\ast b_{0}, \ldots, a_{0}\ast b_{n-1}+\cdots+a_{n-1}\ast b_{0})
\end{eqnarray*}
with $a_{i}\ast b_{j} = a_{i}\alpha^{i}(b_{j})$ for each $i, j$. On the other hand, there is a ring isomorphism $\Phi : R[x, \alpha]/ (x^{n})\rightarrow T(R, n, \alpha)$, given by $\Phi (\sum_{i=0}^{n-1}a_{i}x^{i}) = (a_{0}, a_{1}, a_{2}, \ldots, a_{n-1})$, with $a_{i} \in R,$ $0 \leq i \leq n-1$. So, $T(R, n, \alpha) \cong R[x, \alpha]/(x^{n})$, where $R[x, \alpha]$ is the skew polynomial ring with multiplication subject to the condition $xr = \alpha(r)x$ for each $r \in R$.\\
Also, we consider the following subrings of $S(R, n, \alpha)$
\begin{eqnarray*}
A(R, n, \alpha) = \left\{\sum_{j=1}^{\lfloor\frac{n}{2}\rfloor}\sum_{i=1}^{n-j+1}a_{j}E_{i, i+j-1}+ \sum_{j= \lfloor\frac{n}{2}\rfloor+1}^{n}\sum_{i=1}^{n-j+1}a_{i, i+j-1}E_{i+j-1} | a_{j}, a_{i, k} \in R \right\}.
\end{eqnarray*}

\begin{eqnarray*}
B(R, n, \alpha) = \{A+rE_{1k} ~|~ A \in A(R, n, \alpha), r \in R ~and~ n = 2k\geq 4\}.
\end{eqnarray*}
Let $S$ be a monoid and $f$ be an identity of $S$. Suppose $F\cup \{0\}$ is a free monoid generated by $W = \{w_{1}, w_{2}, \ldots, w_{t}\}$ and $S$ is a factor of $F$. Setting certain monomial in $W$ to $0$, it is enough to show that for some $n$, $\delta^{n} = 0$ for any $\delta \neq f$. Let $R$ be a ring with an endomorphism $\alpha$. Then we can form the skew monoid ring $R[S, \alpha]$, by taking its elements to be finite formal combinations $\sum_{g\in S}r_{g}g$, with multiplication subject to the relation $w_{i}r = \alpha(r)w_{i}$, for each $1 \leq i \leq t$. It is easily seen that $N_{*}(R[S, \alpha]) = \{\sum_{g \in S}r_{g}g | r_{g} \in N_{*}(R)\}$ and also the ring $S(R, n, \alpha)$ and $T(R, n, \alpha)$ fit naturally into $R[S, \alpha]$ with $W = \{E_{12}, \ldots, E_{n-1, n}\}$ and $W = \{E_{12}+\cdots+E_{n-1, n}\}$ respectively.
\begin{thm}
Let $M$ and $S$ be two monoids and $\alpha$ an endomorphism of the ring $R$ with $\alpha(1) = 1$. Then the following hold:
\begin{itemize}
\item[$(1)$] $R$ is lower nil $M$-Armendariz if and only if so is $S(R, n, \alpha)$
\item[$(2)$] $R$ is lower nil $M$-Armendariz if and only if so is $T(R, n, \alpha)$
\item[$(3)$] $R$ is lower nil $M$-Armendariz if and only if so is $A(R, n, \alpha)$
\item[$(4)$] $R$ is lower nil $M$-Armendariz if and only if so is $B(R, n, \alpha)$
\item[$(5)$] $R$ is lower nil $M$-Armendariz if and only if so is $T_{n}(R, \alpha)$
\item[$(6)$] $R$ is lower nil $M$-Armendariz if and only if so is $R(S, \alpha]$
\end{itemize}
\end{thm}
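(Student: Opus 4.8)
The plan is to treat all six equivalences by a single mechanism. Each of the rings $K \in \{S(R,n,\alpha),\, T(R,n,\alpha),\, A(R,n,\alpha),\, B(R,n,\alpha),\, T_{n}(R,\alpha),\, R[S,\alpha]\}$ carries a canonical ring epimorphism $\pi\colon K \to \bar R$ onto a finite direct power $\bar R$ of $R$, given by reading off the main diagonal: for $T_{n}(R,\alpha)$ all $n$ diagonal entries, so $\bar R = R^{n}$, and for the constant-diagonal rings $S,T,A,B$ and for the augmentation of $R[S,\alpha]$ the single identity/diagonal coefficient, so $\bar R = R$. First I would record the two structural facts that make this work. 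The kernel of $\pi$ is a nilpotent ideal of $K$ --- the strictly upper triangular part in the matrix cases, and the augmentation ideal $\{\sum_{g\neq f} r_{g}g\}$ in the case $R[S,\alpha]$, which is nilpotent because every non-identity $\delta \in S$ satisfies $\delta^{n}=0$. Combined with the prime-radical formulas recorded before the statement (e.g.\ $N_{*}(T_{n}(R,\alpha)) = (N_{*}(R),R,\dots,R)$), this yields $N_{*}(K) = \pi^{-1}\big(N_{*}(\bar R)\big)$ and $\pi\big(N_{*}(K)\big) = N_{*}(\bar R)$. The second fact is that $\pi$ carries diagonals multiplicatively with no $\alpha$-twist: the diagonal (resp.\ identity) coefficient of a product $PQ$ is the product of the corresponding coefficients of $P$ and $Q$, since the relevant exponent $j-i$ (resp.\ the exponent of a generator) is $0$ there. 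This is precisely what lets me avoid the transfinite induction of the ordered-monoid theorem above.

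For the forward implication ($R$ lower nil $M$-Armendariz $\Rightarrow K$ lower nil $M$-Armendariz) I would extend $\pi$ coefficientwise to a ring homomorphism $\pi_{M}\colon K[M] \to \bar R[M]$. Given $\Phi = \sum_{p} P_{p}g_{p}$ and $\Psi = \sum_{q} Q_{q}h_{q}$ in $K[M]$ with $\Phi\Psi \in N_{*}(K)[M]$, applying $\pi_{M}$ gives $\pi_{M}(\Phi)\pi_{M}(\Psi) = \pi_{M}(\Phi\Psi) \in \pi_{M}\big(N_{*}(K)[M]\big) \subseteq N_{*}(\bar R)[M]$. Now $\bar R$ is lower nil $M$-Armendariz: for $\bar R = R$ this is the hypothesis, and for $\bar R = R^{n}$ it follows from the direct-product Proposition above. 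Hence $\pi(P_{p})\pi(Q_{q}) \in N_{*}(\bar R)$ for all $p,q$, i.e.\ $\pi(P_{p}Q_{q}) \in N_{*}(\bar R)$, so $P_{p}Q_{q} \in \pi^{-1}\big(N_{*}(\bar R)\big) = N_{*}(K)$. Thus $K$ is lower nil $M$-Armendariz.

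For the converse I would use that $R$ embeds in $K$ as the scalar/constant-diagonal subring via $\iota\colon R \hookrightarrow K$, a section of $\pi$ in the sense that $\iota(N_{*}(R)) \subseteq N_{*}(K)$ and (after projecting $R^{n}$ to one coordinate in the $T_{n}$ case) $\pi\iota = \mathrm{id}_{R}$. Given $f,h \in R[M]$ with $fh \in N_{*}(R)[M]$, I push them into $K[M]$ by $\iota$; then $\iota(f)\iota(h) = \iota(fh) \in N_{*}(K)[M]$, so the lower nil $M$-Armendariz property of $K$ gives $\iota(a_{i})\iota(b_{j}) = \iota(a_{i}b_{j}) \in N_{*}(K)$, and applying $\pi$ returns $a_{i}b_{j} \in N_{*}(R)$. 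Hence $R$ is lower nil $M$-Armendariz.

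The main obstacle I anticipate is not the transfer argument, which is formal once the projection is in place, but the case-by-case verification that $\pi$ really has the advertised kernel and that $N_{*}(K) = \pi^{-1}(N_{*}(\bar R))$ for each construction. For $T_{n}(R,\alpha)$, $S(R,n,\alpha)$ and $T(R,n,\alpha)\cong R[x,\alpha]/(x^{n})$ this is direct, but for the subrings $A(R,n,\alpha)$ and $B(R,n,\alpha)$ I must first confirm that they are closed under the skew multiplication and that their nilpotent (off-diagonal) part is still exactly the kernel of the diagonal projection; and for $R[S,\alpha]$ I must check that the augmentation ideal is genuinely nilpotent, which is where the hypothesis $\delta^{n}=0$ for $\delta\neq f$, together with the finiteness of the relevant support, is essential.
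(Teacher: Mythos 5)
Your proposal is correct and takes essentially the same approach as the paper: the paper's proof (written out only for $S(R,n,\alpha)$, with the rest dismissed as ``similar'') projects each coefficient matrix onto its diagonal entry, applies the lower nil $M$-Armendariz hypothesis on $R$ to get $a_{11}^{(i)}b_{11}^{(j)}\in N_{*}(R)$, and lifts back via the description of $N_{*}(S(R,n,\alpha))$ as the matrices whose diagonal lies in $N_{*}(R)$. Your write-up simply makes this mechanism uniform and explicit --- the projection $\pi$, the identity $N_{*}(K)=\pi^{-1}(N_{*}(\bar R))$, the direct-product step needed for $T_{n}(R,\alpha)$, and the converse via the constant-diagonal embedding --- which is exactly the content the paper leaves implicit.
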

\begin{proof}
\begin{itemize}
\item[$(1)$] Suppose that $R$ is a lower nil-$M$ Armendariz ring. Let $\alpha = \sum_{i=1}^{m}A_{i}g_{i}$ and $\beta = \sum_{j=1}^{n}B_{j}h_{j} \in S(R, n, \alpha)[M]$ such that $\alpha\beta \in N_{*}(S(R, n, \alpha))[M]$, where $A_{i} = (a_{st}^{(i)}),$ $B_{j} = (b_{st}^{(j)})$ for $1 \leq i \leq m$ and $1 \leq j \leq n$. This implies $\alpha_{0}\beta_{0} \in N_{*}(R)[M]$, where $\alpha_{0} = \sum_{i=1}^{m}a_{11}^{(i)}g_{i}$ and $\beta_{0} = b_{11}^{(j)}h_{j} \in R[M]$. Therefore, $a_{11}^{(i)}b_{11}^{(j)} \in N_{*}(R)$ for each $i, j$, since $R$ is a lower nil $M$-Armendariz ring. Therefore, $A_{i}B_{j} \in N_{*}(S(R, n, \alpha))$ for each $i, j$. Hence, $S(R, n, \alpha)$ is a lower nil $M$-Armendariz ring. Proof of other part is similar.
\end{itemize}
\end{proof}

\begin{thm}(Theorem 3.2 of \cite{A}) Let $M$ be a u.p. monoid with nontrivial center. If $R$ is a lower nil $M$-Armendariz ring, then $N(R)[M] = N(R[M])$.
\end{thm}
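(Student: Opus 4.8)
The plan is to pass first to the nil $M$-Armendariz setting and then prove the two set-inclusions separately. Since $M$ is a u.p.\ monoid, Proposition~\ref{pro6} applies and tells us that $R$ is a nil $M$-Armendariz ring; this is the only place the lower nil hypothesis is used, and from here on I would work only with the nil $M$-Armendariz property together with the u.p.\ and nontrivial-center assumptions on $M$. The claim $N(R)[M]=N(R[M])$ then splits into $N(R[M])\subseteq N(R)[M]$ and $N(R)[M]\subseteq N(R[M])$.

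For the inclusion $N(R[M])\subseteq N(R)[M]$ I would first record the iterated form of the Armendariz condition: if $f_{1}\cdots f_{r}\in N(R)[M]$ for $f_{1},\dots,f_{r}\in R[M]$, then every product $c_{1}\cdots c_{r}$ with $c_{\ell}\in C_{f_{\ell}}$ lies in $N(R)$. This follows by induction on $r$ from the two-factor definition, the point being that in a u.p.\ monoid one can locate a monoid element that is a unique product of the supports, so that the corresponding coefficient of $f_{2}\cdots f_{r}$ is a single product rather than a genuine sum; this lets the inductive step go through. Granting this, take $f=\sum_{i=1}^{m}a_{i}g_{i}$ with $f^{k}=0$. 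Applying the iterated form to the $k$ equal factors $f$ and selecting the coefficient $a_{i}$ from each factor gives $a_{i}^{k}\in N(R)$, hence $a_{i}\in N(R)$ for every $i$, so $f\in N(R)[M]$.

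For the reverse inclusion $N(R)[M]\subseteq N(R[M])$ I would take $f=\sum_{i=1}^{m}a_{i}g_{i}$ with every $a_{i}\in N(R)$ and aim to show $f$ is nilpotent in $R[M]$. The strategy is to examine a high power $f^{N}$, group its terms according to their value $w=g_{i_{1}}\cdots g_{i_{N}}$ in $M$, and argue that each grouped coefficient $\sum a_{i_{1}}\cdots a_{i_{N}}$ lies in $N(R)$ and in fact vanishes once $N$ exceeds a bound fixed by the nilpotency indices of the $a_{i}$. Here the nontrivial central element $h\in Z(M)$ is exactly what makes the bookkeeping work: it supplies a factor that commutes past every $g_{i}$, so that the monoid products appearing in $f^{N}$ can be normalized to a fixed order without changing their values, after which the u.p.\ property isolates coefficients and the nil $M$-Armendariz condition controls them. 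Combining the two inclusions yields $N(R)[M]=N(R[M])$.

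I expect this reverse inclusion to be the main obstacle. Unlike the forward inclusion, it is not a formal consequence of the Armendariz-type definition, because $N(R)$ need not be closed under addition; the real work is to show that the cross terms $a_{i_{1}}\cdots a_{i_{N}}$ assemble into nilpotent coefficients, and this is precisely the step in which both the u.p.\ hypothesis and the nontrivial center are indispensable rather than cosmetic. If one prefers to avoid reproving this combinatorial core, the same conclusion is available by invoking Proposition~2.12 of \cite{AA} directly once $R$ has been shown to be nil $M$-Armendariz, the hypotheses on $M$ matching those of the cited result.
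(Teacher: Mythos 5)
First, a point of comparison you could not have known: the paper contains no proof of this statement at all. It is imported verbatim as Theorem 3.2 of \cite{A}, listed alongside Theorems 3 and 5 of \cite{J} as a quoted tool, so your proposal can only be measured against a correct argument rather than against the authors' own. Measured that way, it has genuine gaps in both inclusions. For $N(R[M])\subseteq N(R)[M]$, the iterated Armendariz lemma is not established by the sketch you give: applying the u.p.\ property to the supports of $f_{2},\dots,f_{r}$ produces \emph{one} element $w$ whose coefficient in $f_{2}\cdots f_{r}$ collapses to a single product, so the two-factor definition controls $c_{1}c_{2}\cdots c_{r}$ for that one tuple of coefficients only. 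You cannot choose which tuple, so specializing to $f_{1}=\cdots=f_{k}=f$ with $f^{k}=0$ does not yield $a_{i}^{k}\in N(R)$ (the unique-product tuple will generally not be $(i,i,\dots,i)$), and the usual device of deleting the distinguished term and repeating breaks down when the conclusion is ``lies in $N(R)$'' rather than ``equals $0$,'' precisely because $N(R)$ need not be closed under addition. For $N(R)[M]\subseteq N(R[M])$, the claim that the grouped coefficients of $f^{N}$ vanish ``once $N$ exceeds a bound fixed by the nilpotency indices of the $a_{i}$'' is false in general: finitely many nilpotent elements can generate a subring that is not even nil, so no such bound exists without extra structure. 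You correctly flag this as the main obstacle, but the proposal does not overcome it, and the fallback appeal to Proposition 2.12 of \cite{AA} is invoked elsewhere in this paper only together with a semicommutativity hypothesis that is absent here.

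The missing idea is to exploit the \emph{lower} nil hypothesis fully rather than discarding it after one step. For a u.p.\ monoid, a lower nil $M$-Armendariz ring satisfies $N(R)=N_{*}(R)$; this is the content of Lemma 3.1(c) of \cite{A}, which the paper uses in Example 2.2 and which underlies Proposition \ref{pro6}. Once $R$ is known to be $2$-primal, both inclusions are immediate from the two results of \cite{J} quoted directly below this theorem: $N(R)[M]=N_{*}(R)[M]=N_{*}(R[M])$ by Theorem 3 of \cite{J}, and $N_{*}(R[M])=N(R[M])$ by Theorem 5 of \cite{J} (equivalently, $R/N_{*}(R)$ is reduced and $M$ is u.p., so $(R/N_{*}(R))[M]$ is reduced, which gives the hard inclusion at once). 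This is exactly the route recorded in Remark \ref{rem1}. Rebuilding your argument on that reduction closes both gaps and shows that the difficulty does not reside in the nontrivial-center hypothesis.
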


\begin{thm}(Theorem 3 of \cite{J}) Let $R$ be a ring and $M$ a u.p. monoid. Then $N_{*}(R)[M] = N_{*}(R[M])$.
\end{thm}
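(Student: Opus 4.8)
The plan is to exploit that $N_{*}$ is the intersection of all prime ideals, together with the elementwise descriptions of prime and semiprime ideals, isolating the single place where the unique--product hypothesis on $M$ is really used. First I would dispose of the easy inclusion $N_{*}(R)[M]\subseteq N_{*}(R[M])$, which in fact requires no assumption on $M$. Let $P$ be any prime ideal of $R[M]$. Since $1\in M$ we have $R\subseteq R[M]$ and $Pg\subseteq P$ for every $g\in M$, so $(P\cap R)[M]\subseteq P$. Now if $x,y\in R$ satisfy $xRy\subseteq P\cap R$, then for $\gamma=\sum_{h}r_{h}h$ each coefficient of $x\gamma y=\sum_{h}(xr_{h}y)h$ lies in $xRy\subseteq P\cap R$, whence $xR[M]y\subseteq(P\cap R)[M]\subseteq P$; primeness of $P$ forces $x\in P$ or $y\in P$, i.e. $x\in P\cap R$ or $y\in P\cap R$. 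Thus $P\cap R$ is prime in $R$, so $N_{*}(R)\subseteq P\cap R\subseteq P$ and hence $N_{*}(R)[M]\subseteq P$. Intersecting over all primes $P$ of $R[M]$ gives $N_{*}(R)[M]\subseteq N_{*}(R[M])$.

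The real content is the reverse inclusion, and it rests on a single lemma which I expect to be the main obstacle: \emph{if $R$ is a prime ring and $M$ a u.p. monoid, then $R[M]$ is semiprime.} To prove it I would use that a ring $S$ is semiprime precisely when $aSa=0$ forces $a=0$. Suppose $\alpha=\sum_{i=1}^{n}a_{i}g_{i}$ with the $g_{i}$ distinct, all $a_{i}\neq 0$, and $\alpha R[M]\alpha=0$. Testing only against constants $r\in R$ already gives $\alpha r\alpha=\sum_{i,j}a_{i}ra_{j}\,g_{i}g_{j}=0$ for every $r$. Applying the unique--product property to the support $A=\{g_{1},\dots,g_{n}\}$ with itself produces an element $w\in A\cdot A$ with a \emph{unique} factorization $w=g_{i_{0}}g_{j_{0}}$; comparing the coefficient of $w$ collapses the double sum to the single term $a_{i_{0}}ra_{j_{0}}$, so $a_{i_{0}}Ra_{j_{0}}=0$. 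Primeness of $R$ now forces $a_{i_{0}}=0$ or $a_{j_{0}}=0$, a contradiction, so $\alpha=0$ and $R[M]$ is semiprime. The delicate point is exactly that u.p. guarantees a genuinely unique representation, so that no cancellation among the cross terms can occur; without it (compare the $M_{2}(\mathbb{Z})$-style monoid in the earlier example) the coefficient comparison collapses and the conclusion fails.

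Finally I would bootstrap to an arbitrary ring $R$. Let $\{P_{\lambda}\}$ range over all prime ideals of $R$, so that $\bigcap_{\lambda}P_{\lambda}=N_{*}(R)$. For each $\lambda$ the quotient $R/P_{\lambda}$ is prime, hence $R[M]/P_{\lambda}[M]\cong(R/P_{\lambda})[M]$ is semiprime by the lemma. Since a surjection of rings carries prime radical into prime radical, the map $R[M]\to(R/P_{\lambda})[M]$ sends $N_{*}(R[M])$ into $N_{*}\big((R/P_{\lambda})[M]\big)=0$, giving $N_{*}(R[M])\subseteq P_{\lambda}[M]$. Intersecting over $\lambda$ and using $\bigcap_{\lambda}P_{\lambda}[M]=(\bigcap_{\lambda}P_{\lambda})[M]$ yields $N_{*}(R[M])\subseteq(\bigcap_{\lambda}P_{\lambda})[M]=N_{*}(R)[M]$. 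Combined with the first inclusion, this establishes $N_{*}(R)[M]=N_{*}(R[M])$.
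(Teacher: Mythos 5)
Your proof is correct, but note that the paper does not actually prove this statement: it is quoted verbatim as Theorem~3 of the cited reference of Cheon and Kim on prime radicals in u.p.-monoid rings, and is used in the paper as a black box. So there is no internal proof to compare against; what you have supplied is a self-contained argument, and it is the standard one for results of this type. All three of your steps check out: (i) the contraction $P\cap R$ of a prime $P\trianglelefteq R[M]$ is prime in $R$ because $xR[M]y\subseteq (xRy)[M]$, which gives $N_{*}(R)[M]\subseteq N_{*}(R[M])$ with no hypothesis on $M$; (ii) the key lemma that $R$ prime and $M$ u.p.\ imply $R[M]$ semiprime is exactly where the unique-product property enters, via the characterization of semiprimeness by $\alpha S\alpha=0\Rightarrow\alpha=0$ and the collapse of the coefficient of a uniquely presented element $w=g_{i_{0}}g_{j_{0}}$ to the single term $a_{i_{0}}ra_{j_{0}}$; and (iii) the passage from prime to general $R$ via the surjections $R[M]\to(R/P_{\lambda})[M]$ and the identity $\bigcap_{\lambda}P_{\lambda}[M]=\bigl(\bigcap_{\lambda}P_{\lambda}\bigr)[M]$ is sound, since a surjection carries the prime radical into the prime radical. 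One small point worth making explicit in step (ii): you apply the u.p.\ property to the pair $(A,A)$ where $A$ is the support of $\alpha$, which is legitimate since the definition allows the two finite subsets to coincide; this is consistent with the paper's own use of the hypothesis (compare its $M_{2}(\mathbb{Z})$-unit-matrix example, where exactly this uniqueness fails).
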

\begin{thm} (Theorem 5 of \cite{J}) Let $R$ be a ring and $M$ a u.p. monoid. Then $N_{*}(R) = N(R)$ if and only if $N_{*}(R[M]) = N(R[M])$.
\end{thm}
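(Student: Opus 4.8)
The plan is to recast the biconditional entirely in terms of reducedness of monoid rings, exploiting the fact that a ring $S$ satisfies $N_{*}(S) = N(S)$ precisely when the factor ring $S/N_{*}(S)$ is reduced. I would apply this observation to both $S = R$ and $S = R[M]$, so that the two conditions in the statement become ``$R/N_{*}(R)$ is reduced'' and ``$R[M]/N_{*}(R[M])$ is reduced'' respectively.

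The bridge between these two factor rings comes from Theorem 3 of \cite{J}, which gives $N_{*}(R[M]) = N_{*}(R)[M]$. Since $N_{*}(R)$ is an ideal of $R$, the standard isomorphism $R[M]/I[M] \cong (R/I)[M]$ for an ideal $I$ yields
\begin{equation*}
R[M]/N_{*}(R[M]) = R[M]/N_{*}(R)[M] \cong (R/N_{*}(R))[M].
\end{equation*}
Thus the whole problem collapses to showing that the ring $R/N_{*}(R)$ is reduced if and only if $(R/N_{*}(R))[M]$ is reduced.

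The heart of the argument is therefore the following lemma: for a u.p. monoid $M$ and any ring $S$, $S$ is reduced if and only if $S[M]$ is reduced. The backward direction is immediate, since $S$ embeds in $S[M]$ via $s \mapsto s\cdot e$ and any subring of a reduced ring is reduced. For the forward direction, suppose $S$ is reduced; then by Proposition 1.1 of \cite{Z} the ring $S$ is $M$-Armendariz. Given $\eta \in S[M]$ with $\eta^{2} = 0$, the $M$-Armendariz property forces every product of coefficients of $\eta$ to vanish; in particular each coefficient squares to zero and hence, by reducedness of $S$, is itself zero. Therefore $\eta = 0$ and $S[M]$ is reduced.

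Putting the pieces together finishes both implications at once. If $N_{*}(R) = N(R)$, then $R/N_{*}(R)$ is reduced, so by the lemma $(R/N_{*}(R))[M] \cong R[M]/N_{*}(R[M])$ is reduced, whence $N_{*}(R[M]) = N(R[M])$. Conversely, if $N_{*}(R[M]) = N(R[M])$, then $R[M]/N_{*}(R[M])$ is reduced, so the isomorphism shows $(R/N_{*}(R))[M]$ is reduced, and the lemma gives that $R/N_{*}(R)$ is reduced, i.e.\ $N_{*}(R) = N(R)$. I expect the main obstacle to be the forward half of the lemma: everything else is formal manipulation with the ideal $N_{*}(R)$ and the isomorphism above, but the passage ``$S$ reduced $\Rightarrow$ $S[M]$ reduced'' genuinely needs the unique product hypothesis, entering through the $M$-Armendariz property of reduced rings.
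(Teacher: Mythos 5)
Your argument is correct. Note, however, that the paper does not prove this statement at all: it is quoted verbatim as Theorem 5 of \cite{J} (Cheon--Kim), so there is no in-paper proof to measure you against. What you have produced is a legitimate self-contained derivation that takes Theorem 3 of \cite{J} (the identity $N_{*}(R[M]) = N_{*}(R)[M]$, which the paper also imports without proof) as its only nontrivial input, together with two standard facts: a ring $S$ satisfies $N_{*}(S)=N(S)$ exactly when $S/N_{*}(S)$ is reduced, and a reduced ring is $M$-Armendariz for a u.p.\ monoid $M$ (Proposition 1.1 of \cite{Z}). All the steps check out: the isomorphism $R[M]/N_{*}(R)[M]\cong (R/N_{*}(R))[M]$ is the standard one, and your lemma that $S$ is reduced if and only if $S[M]$ is reduced is sound --- the forward direction via the $M$-Armendariz property applied to $\eta\cdot\eta=0$ gives $a_{i}a_{j}=0$ for all coefficients, hence $a_{i}^{2}=0$ and $a_{i}=0$; and it suffices to test square-zero elements since a ring with no nonzero square-zero elements is reduced. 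The one thing to be aware of is that your proof is only as strong as Theorem 3 of \cite{J}; if the point of the exercise were to prove the quoted theorem from scratch, the real work (a unique-product argument showing $N_{*}(R)[M]\subseteq N_{*}(R[M])$ and conversely) is hidden inside that citation. As a reduction of Theorem 5 to Theorem 3 of the same source, the argument is complete and clean.
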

\begin{rem}\label{rem1} For a $2$-primal ring $R$ and a u.p. monoid $M$, one can conclude that $N(R)[M] = N(R[M])$.
\end{rem}

\textbf{Weak Annihilator:} Let $R$ be a ring. For a subset $X$ of $R$, the weak annihilator of $X$ in $R$ is $N_{R}(X) = \{a \in R : xa \in N(R) ~for ~all ~x \in X\}$.
If $X$ is singleton, say $X = \{r\}$, then we use $N_{R}(r)$.\\
Given a ring $R$, we define

\begin{equation*}
N Ann_{R}(2^{R}) = \{N_{R}(U): U \subseteq R \}
\end{equation*}
\begin{equation*}
N Ann_{R[M]}(2^{R[M]}) = \{N_{R[M]}(V): V \subseteq R[M] \}.
\end{equation*}
For an element $\beta \in R[M],$ $C_{\beta}$ denotes the set of all coefficients of $\beta$ and for a subset $W$ of $R[M]$, $C_{W}$ denotes the set $\bigcup_{\beta \in W}C_{\beta}$.
\begin{thm} Let $M$ be a u. p. monoid with nontrivial center and $R$ a lower nil $M$-Armendariz. Then
\begin{equation*}
\Phi : N Ann_{R}(2^{R})\rightarrow N Ann_{R[M]}(2^{R[M]})
\end{equation*}
defined by $\Phi(I) = I[M]$ for every $I \in N Ann_{R}(2^{R})$ is bijective.
\end{thm}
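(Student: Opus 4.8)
The plan is to realize every weak annihilator in $R[M]$ as the extension $I[M]$ of a weak annihilator $I$ of $R$, with the coefficient set $C_V$ as the bridging device. I would first establish a reduction lemma: for any subset $U\subseteq R$, viewed inside $R[M]$ via $u\mapsto ue$, one has $N_{R[M]}(U)=N_R(U)[M]$. Indeed, for $\gamma=\sum_k c_k t_k$ and a single $u\in U$ the product $u\gamma=\sum_k (uc_k)t_k$ keeps the $t_k$ as distinct monomials, so by Theorem 3.2 of \cite{A} (which gives $N(R[M])=N(R)[M]$) we have $u\gamma\in N(R[M])$ exactly when $uc_k\in N(R)$ for all $k$; quantifying over $u\in U$ says precisely that each $c_k$ lies in $N_R(U)$, i.e. $\gamma\in N_R(U)[M]$. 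This lemma simultaneously shows that $\Phi$ is well defined, since $\Phi(N_R(U))=N_R(U)[M]=N_{R[M]}(U)$ is a genuine weak annihilator of the constant set $U\subseteq R[M]$.

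Injectivity is then immediate: if $I_1[M]=I_2[M]$, comparing the coefficients of the identity $e\in M$ forces $I_1=I_2$. For surjectivity I would take an arbitrary weak annihilator $N_{R[M]}(V)$, $V\subseteq R[M]$, and aim to prove $N_{R[M]}(V)=N_{R[M]}(C_V)=N_R(C_V)[M]=\Phi(N_R(C_V))$, the last two equalities being the reduction lemma applied to $U=C_V$. The inclusion $N_{R[M]}(V)\subseteq N_{R[M]}(C_V)$ is where the hypotheses act: by Proposition \ref{pro6} the ring $R$ is nil $M$-Armendariz, so for $\gamma\in N_{R[M]}(V)$ and any $\beta=\sum_j b_j h_j\in V$ the relation $\beta\gamma\in N(R[M])=N(R)[M]$ yields $b_j c_k\in N(R)$ for all $j,k$. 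Since every $a\in C_V$ occurs as some coefficient $b_j$, each single $a\in C_V$ then satisfies $a\gamma=\sum_k (ac_k)t_k\in N(R)[M]=N(R[M])$ (again no collapsing of monomials occurs), whence $\gamma\in N_{R[M]}(C_V)$.

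The reverse inclusion $N_{R[M]}(C_V)\subseteq N_{R[M]}(V)$ is the step I expect to be the main obstacle. Here one starts from $\gamma\in N_R(C_V)[M]$, so that $ac_k\in N(R)$ for every $a\in C_V$ and every coefficient $c_k$ of $\gamma$, and must deduce $\beta\gamma\in N(R[M])$ for each $\beta=\sum_j b_j h_j\in V$. Every individual product $b_j c_k$ is nilpotent, but the genuine coefficients of $\beta\gamma$ are the sums $\sum_{h_j t_k=w} b_j c_k$, so the real difficulty is the failure of the nilpotent set to be additively closed; unlike the previous inclusion, one cannot multiply by a single element to separate the monomials. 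I would overcome this by pushing the computation into the prime radical, exploiting that for a lower nil $M$-Armendariz ring $N_*(R)=N^*(R)$ together with the identities $N(R)[M]=N(R[M])$ (Theorem 3.2 of \cite{A}) and $N_*(R)[M]=N_*(R[M])$ (Theorem 3 of \cite{J}); the point is to recognize the offending coefficient products as living in an honest ideal, so that the sums $\sum_{h_j t_k=w} b_j c_k$ remain nilpotent and $\beta\gamma\in N(R[M])$ follows. Granting this, the two inclusions give $N_{R[M]}(V)=\Phi(N_R(C_V))$, which establishes surjectivity and completes the proof that $\Phi$ is bijective.
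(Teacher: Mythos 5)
Your overall architecture --- realizing every weak annihilator $N_{R[M]}(V)$ as $N_{R[M]}(C_{V})=N_{R}(C_{V})[M]$ and reading off bijectivity from there --- is precisely the content of Theorem 3.1 of \cite{O}, which is all the paper's own proof invokes after noting, exactly as you do, that $N(R)[M]=N(R[M])$ by Theorem 3.2 of \cite{A} and that $R$ is nil $M$-Armendariz by Proposition \ref{pro6}. So you are in effect re-deriving the cited theorem from scratch; your reduction lemma, the injectivity argument, and the inclusion $N_{R[M]}(V)\subseteq N_{R[M]}(C_{V})$ are all correct and match what that proof does.

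The gap is the reverse inclusion $N_{R[M]}(C_{V})\subseteq N_{R[M]}(V)$, which you correctly flag as the main obstacle but then only gesture at, and the mechanism you propose would not work. From $\gamma\in N_{R}(C_{V})[M]$ you know only that each product $b_{j}c_{k}$ is \emph{nilpotent}; ``pushing the computation into the prime radical'' via $N_{*}(R)=N^{*}(R)$ and $N_{*}(R)[M]=N_{*}(R[M])$ requires these elements to lie in $N_{*}(R)$ or $N^{*}(R)$, and nothing in your argument places them there (the hypotheses do not make $R$ NI or $2$-primal, so $N(R)$ is not known to be an ideal). What actually closes this step is the Antoine-type fact that for a nil $M$-Armendariz ring over a monoid with $|M|\geq 2$ the set $N(R)$ is closed under addition, equivalently that the defining implication is an ``if and only if'': once $b_{j}c_{k}\in N(R)$ for all $j,k$, each coefficient $\sum_{h_{j}t_{k}=w}b_{j}c_{k}$ of $\beta\gamma$ is a finite sum of elements of $N(R)$ and hence again nilpotent, giving $\beta\gamma\in N(R)[M]=N(R[M])$. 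That closure lemma is proved in \cite{AA} and \cite{O} (it is what Theorem 3.1 of \cite{O} packages together with the identity $N(R)[M]=N(R[M])$); without it, or an explicit substitute, your surjectivity argument does not go through.
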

\begin{proof} By Theorem (3.2) of \cite{A}, we have $N(R)[M] = N(R[M])$ and also from Proposition \ref{pro6}, $R$ is nil $M$-Armendariz ring. Therefore by Theorem $(3.1)$ of \cite{O} result is true.
\end{proof}
A ring $R$ is said to be nilpotent p.p. ring if for any $q \notin N(R)$, the $N_{R}(q)$ is generated as a right ideal by a nilpotent element of $R$.
\begin{thm} Let $M$ be a monoid and $R$ be a semicommutative lower nil $M$-Armendariz ring. If $R$ is nilpotent p.p. ring, then so is $R[M]$.
\end{thm}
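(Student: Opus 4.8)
The plan is to reduce everything to the weak annihilator of the coefficient set of a single element of $R[M]$, compute it explicitly, and then invoke the nilpotent p.p.\ hypothesis. First I would record the structural facts I need. Since $R$ is semicommutative it is $2$-primal, so $N_{*}(R) = N(R)$ and this common set is an ideal. Because $N_{*}(R) = N(R)$, the defining condition of a lower nil $M$-Armendariz ring coincides verbatim with that of a nil $M$-Armendariz ring, so $R$ is also nil $M$-Armendariz. Hence by Proposition 2.12 of \cite{AA} we get $N(R)[M] = N(R[M])$, and combining this with Proposition \ref{pro2}, which gives $N_{*}(R)[M] = N_{*}(R[M])$, we obtain
\begin{equation*}
N(R[M]) = N(R)[M] = N_{*}(R)[M] = N_{*}(R[M]),
\end{equation*}
so that $R[M]$ is itself $2$-primal and its nilradical consists exactly of the polynomials with coefficients in $N(R)$.

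Next I would compute the weak annihilator of an arbitrary $\gamma = \sum_{k} q_{k}g_{k} \in R[M]$. I claim $N_{R[M]}(\gamma) = N_{R}(C_{\gamma})[M]$, where $C_{\gamma}$ is the (finite) coefficient set of $\gamma$. For the inclusion $\supseteq$, if every coefficient $d_{l}$ of $\delta = \sum_{l} d_{l}h_{l}$ lies in $N_{R}(C_{\gamma})$, then each product $q_{k}d_{l} \in N(R)$, whence every coefficient $\sum_{g_{k}h_{l}=w} q_{k}d_{l}$ of $\gamma\delta$ lies in the ideal $N(R)$; thus $\gamma\delta \in N(R)[M] = N(R[M])$. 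For $\subseteq$, if $\gamma\delta \in N(R[M]) = N_{*}(R)[M]$, then the lower nil $M$-Armendariz property forces $q_{k}d_{l} \in N_{*}(R) = N(R)$ for all $k,l$, i.e.\ every $d_{l} \in N_{R}(C_{\gamma})$. This establishes the claim and reduces the problem to showing that $N_{R}(C_{\gamma})$ is generated, as a right ideal of $R$, by a nilpotent element.

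The key step, and the one I expect to be the main obstacle, is to handle the weak annihilator of the \emph{finite set} $C_{\gamma}$ rather than of a single element, since the nilpotent p.p.\ hypothesis is phrased only for singletons. Here the $2$-primal structure saves the day: for any $q \notin N(R)$ the inclusion $N(R) \subseteq N_{R}(q)$ holds (as $N(R)$ is an ideal), while the hypothesis gives $N_{R}(q) = eR$ with $e$ nilpotent, and $e \in N(R)$ forces $eR \subseteq N(R)$; hence $N_{R}(q) = N(R)$ for every $q \notin N(R)$. Since $\gamma \notin N(R[M]) = N(R)[M]$, at least one coefficient $q_{k_{0}} \notin N(R)$, and $N_{R}(q) = R$ for those coefficients lying in $N(R)$; therefore
\begin{equation*}
N_{R}(C_{\gamma}) = \bigcap_{k} N_{R}(q_{k}) = N(R) = eR,
\end{equation*}
with $e$ nilpotent. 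Finally $N_{R[M]}(\gamma) = N(R)[M] = (eR)[M] = e\,R[M]$, a right ideal of $R[M]$ generated by the element $e$, which is nilpotent in $R[M]$. As $\gamma \notin N(R[M])$ was arbitrary, $R[M]$ is a nilpotent p.p.\ ring. The one delicate point to double-check is that the collapse $N_{R}(C_{\gamma}) = N(R)$ genuinely uses both the $2$-primality (for $N(R) \subseteq N_{R}(q)$ and $eR \subseteq N(R)$) and the hypothesis; without it one would be left needing finite intersections of nilpotent-generated right ideals to remain nilpotent-generated, which is not otherwise evident.
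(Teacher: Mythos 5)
Your proof is correct and follows essentially the same route as the paper's: both identify the weak annihilator $N_{R[M]}(\gamma)$ with a set of the form $N_{R}(\;\cdot\;)[M]$ by combining the lower nil $M$-Armendariz property with $N(R)[M]=N(R[M])$ (from Proposition 2.12 of \cite{AA} and semicommutativity), and then transfer the nilpotent generator from $R$ to $R[M]$. Your additional observation that $N_{R}(q)=N(R)$ for every $q\notin N(R)$, which collapses the annihilator of the whole coefficient set, is just a cleaner packaging of what the paper does by fixing one non-nilpotent coefficient $a_{i}$ and writing $N_{R}(a_{i})=sR$.
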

\begin{proof} Let $\alpha = a_{1}g_{1}+a_{2}g_{2}+\cdots+a_{n}g_{n} \notin N(R[M])$ and $\beta = b_{1}h_{1}+b_{2}h_{2}+\cdots+b_{m}h_{m} \in N_{R[M]}(\alpha)$. Then $\alpha \beta \in N(R[M]$. Also, by Proposition (2.12) of \cite{AA}, $N(R)[M] = N(R[M])$, therefore $\alpha \beta \in N(R)[M]$ and hence $a_{i}b_{j} \in N(R)$ for each $1 \leq i \leq n, 1 \leq j \leq m$. Since $\alpha \notin N(R[M]) = N(R)[M]$, so there exist at least one $i$, $1\leq i \leq n$ such that $a_{i} \notin N(R)$. Therefore, $R$ being nilpotent $p. p.$ ring, there exist some $s \in N(R)$ such that $N_{R}(a_{i}) = sR$. \\
Now, we claim $N_{R[M]}(\alpha) = se.R[M]$. Since $b_{j} \in N_{R}(a_{i})$ for each $1 \leq j \leq m$, so $b_{j} = sr_{j}$ for some $r_{j} \in R$. Therefore $\beta = se(r_{1}h_{1}+r_{2}h_{2}+\cdots+r_{m}h_{m}) \in se.R[M]$, hence $N_{R[M]}(\alpha) \subseteq se.R[M]$. \\
Also, for any $\delta = w_{1}e_{1}+w_{2}e_{2}+\cdots+w_{q}e_{q} \in R[M]$ and since $s \in N(R)$, therefore $a_{i}sw_{j} \in N(R)$ for each $i, j$, because $R$ is semicommutative ring. So $\alpha.se. \delta \in N(R)[M] = N(R[M])$. Hence we have $se.R[M] \subseteq N_{R[M]}(\alpha)$. Thus, $N_{R[M]}(\alpha) = se.R[M]$, where $se \in N(R[M])$.
\end{proof}

\section{Lower nil $M$-Armendarz ring and $M$-Armendariz ring}
\begin{thm}\label{thm1} Let $M$ be a strictly totally ordered monoid and $I$ be a proper ideal of $R$ with $N_{*}(R) \subseteq I.$ If $R/I$ is $M$-Armendariz and $I$ is 2-primal ring, then $R$ is lower nil $M$-Armendariz ring.
\end{thm}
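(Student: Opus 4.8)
The plan is to run transfinite induction on the strictly totally ordered monoid $(M,\le)$, exactly along the lines of the preceding theorem, but to feed the induction with the two facts coming from the new hypotheses: that $R/I$ is $M$-Armendariz and that $I$ is $2$-primal. First I would normalize notation, writing $\alpha=\sum_{i=1}^{m}a_ig_i$ and $\beta=\sum_{j=1}^{n}b_jh_j$ with $g_1<\cdots<g_m$, $h_1<\cdots<h_n$, so that $\alpha\beta=\sum_w\big(\sum_{g_ih_j=w}a_ib_j\big)w$ and each coefficient $\sum_{g_ih_j=w}a_ib_j\in N_*(R)$. The first key observation uses $R/I$: since $N_*(R)\subseteq I$ we have $\alpha\beta\in N_*(R)[M]\subseteq I[M]$, hence $\overline\alpha\,\overline\beta=\overline 0$ in $(R/I)[M]$, and because $R/I$ is $M$-Armendariz this yields $a_ib_j\in I$ for all $i,j$. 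This membership is, in the present setting, the substitute for the information the previous proof extracted from ``$R/I$ lower nil $M$-Armendariz''; it is what keeps every product appearing during the induction inside the ideal $I$, so the uniform exponent $s$ of the earlier proof is now needed only as convenient padding rather than as a nilpotency input.

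Next I would set up the induction to prove $a_ib_j\in N_*(R)$ for every pair. The base case is the minimal element $w=g_1h_1$, whose coefficient is the single term $a_1b_1\in N_*(R)$. For the inductive step, fix $w$, assume $a_ib_j\in N_*(R)$ whenever $g_ih_j<w$, and enumerate $X=\{(g_{i_t},h_{j_t}):g_{i_t}h_{j_t}=w\}$ so that $g_{i_1}<\cdots<g_{i_k}$; strict total ordering together with cancellativity forces $h_{j_1}>\cdots>h_{j_k}$. A short comparison of exponents then gives $g_{i_s}h_{j_t}<w$ for all $s<t$, so the induction hypothesis supplies the ``cross'' relations $a_{i_s}b_{j_t}\in N_*(R)$ for $s<t$; in particular $a_{i_1}b_{j_t}\in N_*(R)$, whence each $b_{j_t}a_{i_1}$ is nilpotent modulo $N_*(R)$. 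These nilpotent cross terms are the seeds for separating the diagonal sum $\sum_{t}a_{i_t}b_{j_t}\in N_*(R)$ into its individual terms.

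The engine of the argument is this separation, which is where I expect the real work to lie. I would mimic the word-manipulation of the preceding theorem: starting from a product that is visibly zero because it contains a power of a nilpotent cross term such as $b_{j_2}a_{i_1}$, I would regroup by associativity to reach $(r_1a_{i_t}b_{j_t}r_2)(r_1a_{i_1}b_{j_1}r_2)^{s+1}\in N(I)$ for each $t\ge 2$ and all $r_1,r_2\in R$. Two features of the new hypotheses drive this. Each factor $r_1a_{i_t}b_{j_t}r_2$ lies in $I$ because $a_{i_t}b_{j_t}\in I$, so the nilpotent products genuinely land in $N(I)$; and since $I$ is $2$-primal, $N(I)=N_*(I)$ is an ideal of $I$, so the sum over $t\ge2$ of these products stays in $N(I)$ — this is precisely the role played by semicommutativity of $I$ in the earlier theorem. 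Multiplying the diagonal relation $\sum_t r_1a_{i_t}b_{j_t}r_2\in N_*(R)$ on the right by $(r_1a_{i_1}b_{j_1}r_2)^{s+1}$ and subtracting the $t\ge2$ part then isolates $(r_1a_{i_1}b_{j_1}r_2)^{s+2}\in N(I)\subseteq N(R)$. Specializing $r_1=r_2=1$ shows $a_{i_1}b_{j_1}$ is nilpotent; combined with $a_{i_1}b_{j_1}\in I$ and the identity $N(I)=N_*(I)=I\cap N_*(R)$ (nilpotency of $I$ plus $2$-primality), this upgrades to $a_{i_1}b_{j_1}\in N_*(R)$. Repeating the same elimination for $t=2,\dots,k$ clears the whole diagonal and closes the induction.

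The main obstacle is exactly this regrouping: verifying that every sub-word one discards really vanishes modulo $N_*(R)$, using only the cross relations $a_{i_s}b_{j_t}\in N_*(R)$ $(s<t)$ and the nilpotency indices they carry, and confirming that $2$-primality of $I$ suffices to replace semicommutativity throughout. Before attempting the long computation I would first settle the supporting identity $N(I)=N_*(I)=I\cap N_*(R)$, since it is the single point at which ``nilpotent and inside $I$'' is converted into ``inside $N_*(R)$'', and it is precisely where the hypotheses $N_*(R)\subseteq I$ and ``$I$ $2$-primal'' are used in tandem.
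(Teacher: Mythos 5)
Your overall skeleton matches the paper's: pass to $(R/I)[M]$ to get $a_ib_j\in I$ for all $i,j$, run transfinite induction on $(M,\le)$, order the set $X=\{(g_{i_t},h_{j_t}):g_{i_t}h_{j_t}=w\}$, harvest the cross relations $a_{i_1}b_{j_t}\in N_*(R)$ for $t\ge 2$ from the induction hypothesis, and finish by converting ``nilpotent and inside $I$'' into ``inside $N_*(R)$'' via $N(I)=N_*(I)=N_*(R)$. The gap is in the engine you propose for separating the diagonal sum $\sum_t a_{i_t}b_{j_t}\in N_*(R)$ into its individual terms. You plan to ``mimic the word-manipulation of the preceding theorem'' and you yourself flag as the main obstacle the question of whether $2$-primality of $I$ can replace semicommutativity in that regrouping. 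It cannot: the regrouping in the semicommutative-ideal theorem works by inserting ring elements between the letters of a product that vanishes because it contains a high power of $b_{j_2}a_{i_1}$, and the license to insert is exactly the semicommutativity hypothesis ($xy=0\Rightarrow xIy=0$). Two-primality of $I$ gives no such license, and you also cannot pass from $a_{i_1}b_{j_t}\in N_*(R)$ to $b_{j_t}a_{i_1}$ being genuinely nilpotent in $R$ (the prime radical is not symmetric in this way for products). So the computation you defer would not go through as described, and the statement $(r_1a_{i_t}b_{j_t}r_2)(r_1a_{i_1}b_{j_1}r_2)^{s+1}\in N(I)$ is left unproved.

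The paper's proof uses a different and much shorter device, which never touches a genuinely vanishing word. For $t\ge 2$ set $p=b_{j_t}(a_{i_1}b_{j_1})a_{i_1}$. Then $p\in I$ because $a_{i_1}b_{j_1}\in I$ and $I$ is an ideal of $R$, and
$p^2=b_{j_t}(a_{i_1}b_{j_1})\,(a_{i_1}b_{j_t})\,(a_{i_1}b_{j_1})a_{i_1}\in N_*(R)$,
since the middle factor $a_{i_1}b_{j_t}$ lies in the two-sided ideal $N_*(R)$ of $R$ --- no insertion or semicommutativity is needed, only that $N_*(R)$ absorbs arbitrary multiplication. Because $I$ is $2$-primal and $N_*(I)=N_*(R)$, the quotient $I/N_*(R)$ is reduced, so $p^2\in N_*(R)$ forces $p\in N_*(R)$, whence $(a_{i_t}b_{j_t})(a_{i_1}b_{j_1})^2=a_{i_t}\,p\,b_{j_1}\in N_*(R)$. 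Multiplying $\sum_t a_{i_t}b_{j_t}\in N_*(R)$ on the right by $(a_{i_1}b_{j_1})^2$ then isolates $(a_{i_1}b_{j_1})^3\in N_*(R)$, and reducedness of $I/N_*(R)$ gives $a_{i_1}b_{j_1}\in N_*(R)$; one repeats to clear the diagonal. Note that this also makes your whole $r_1,r_2$ apparatus unnecessary: once $a_{i_1}b_{j_1}$ is nilpotent and in the $2$-primal ideal $I$, it is already in $N_*(I)=N_*(R)$, whereas the universally quantified $r_1,r_2$ were needed in the semicommutative theorem precisely because there nilpotency in $I$ did not suffice. You correctly identified where $2$-primality must enter; the missing idea is to apply the reducedness of $I/N_*(R)$ to an element like $p$ whose square visibly contains a cross term, rather than to port the semicommutative word computation.
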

\begin{proof} Suppose $\overline{R} = R/I$ is an $M$-Armendariz ring and $I$ is a 2-primal ring. Since, $N_{*}(R) \subseteq I$, so $N_{*}(R) \subseteq N_{*}(I)$. Also $I$ is an ideal of $R$, so $N_{*}(I) \subseteq N_{*}(R)$ and hence $N_{*}(I) = N_{*}(R)$. Since $I$ is a 2-primal, $ I/N_{*}(R) = I/N_{*}(I) \cong I/N(I)$, therefore it is reduced. \\
Let $\alpha = a_{1}g_{1}+a_{2}g_{2}+\ldots+a_{n}g_{n}$ and $\beta = b_{1}h_{1}+b_{2}h_{2}+\ldots+b_{m}h_{m}$ such that $\alpha \beta \in N_{*}(R)[M]$ with $g_{1}<g_{2}<\ldots<g_{m}, h_{1}<h_{2}<\ldots<h_{n}$ where $g_{1}, g_{2}, \ldots, g_{n}, h_{1}, h_{2}, \ldots h_{m} \in M$. Now, we use transfinite induction on strictly totally ordered set $(M, \leq)$ to show $a_{i}b_{j} \in N_{*}(R)$. Since $N_{*}(R)\subseteq I$, therefore $\alpha \beta \in I[M]$. Since $R/I$ is $M$-Armendariz, therefore $a_{i}b_{j} \in I$ for each $i$ and $j$. If there exist $1 \leq i \leq n$ and $1 \leq j \leq m$ such that $g_{i}h_{i} = g_{1}h_{1}$, then $g_{1} \leq g_{i}$ and $h_{1} \leq h_{j}$. If $g_{1}<g_{i}$, then $g_{1}h_{1}<g_{i}h_{1}\leq g_{i}h_{i} = g_{1}h_{1}$ implies $g_{1}h_{1}<g_{1}h_{1}$, a contradiction. Hence $a_{1}b_{1} \in N_{*}(R)$. \\
Now, let $w \in M$ be such that for any $g_{i}$ and $h_{j}$, $g_{i}h_{j}<w$, $a_{i}b_{j} \in N_{*}(R)$. We will show that $a_{i}b_{j} \in N_{*}(R)$ for any $g_{i}$ and $h_{j}$ with $g_{i}h_{j} = w$. Consider $X = \{(g_{i}, h_{j}) : g_{i}h_{j} = w\}$, then $X$ is a finite set. So, we put $X = \{(g_{i_t}, h_{j_t}): t = 1, 2, \ldots k\}$ such that $g_{i_1}<g_{i_2}<\ldots<g_{i_k}$. Here $M$ is cancellative, $g_{i_1} = g_{i_2}$ and $g_{i_1}h_{j_1} = g_{i_2}h_{j_2} = w$ imply $h_{j_1} = h_{j_2}$. Since $(M, \leq)$ strictly totally ordered monoid, $g_{i_1}<g_{i_2}$ and $g_{i_1}h_{j_1} = g_{i_2}h_{j_2} = w$, therefore $h_{j_2}<h_{j_1}$. Thus, we have $h_{j_k}<\ldots<h_{j_2}<h_{j_1}$. Now \\
\begin{equation}
\sum_{g_{i}h_{j} = w}a_{i}b_{j} = \sum_{t=1}^{t=k}a_{i_t}b_{j_t} \in N_{*}(R).
\end{equation}
For any $t\geq 2$, $g_{i_1}h_{j_t} < g_{i_t}h_{j_t} = w$ and by induction hypothesis $a_{i_1}b_{j_t}\in N_{*}(R)$. Take $p = b_{j_t}(a_{i_1}b_{j_1})a_{i_1}$, then $p \in I$, since $a_{i}b_{j} \in I$ for each $i$ and $j$. Since $a_{i_1}b_{j_t}\in N_{*}(R)$, so $p^{2} = b_{j_t}(a_{i_1}b_{j_1})a_{i_1}b_{j_t}(a_{i_1}b_{j_1})a_{i_1} \in N_{*}(R)$. Since $I/N_{*}(R)$ is reduced, therefore, $p \in N_{*}(R)$. Now,\\
\begin{equation}
(a_{i_t} b_{j_t})(a_{i_1}b_{i_1})^{2} = (a_{i_t} (b_{j_t}(a_{i_1} b_{i_1})a_{i_1})b_{i_1})\in N_{*}(R)
\end{equation}
Multiplying $(2.1)$ by $(a_{i_1}b_{i_1})^{2}$ from right, we get\\
$(a_{i_1}b_{i_1})^{3} + \sum_{t=2}^{k}(a_{i_t} b_{i_t})(a_{i_1})b_{i_1})^{2} \in N_{*}(R)$. This implies $(a_{i_1}b_{i_1})^{3} \in N_{*}(R)$. Since $I/N_{*}(R)$ is reduced, therefore, $(a_{i_1}b_{i_1}) \in N_{*}(R)$. \\
Again by $(2.1)$ we have \\
\begin{equation}
a_{i_2}b_{i_2}+\ldots+a_{i_k}b_{i_k} \in N_{*}(R)
\end{equation}
so, $(a_{i_t} b_{i_t})(a_{i_2} b_{i_2})^{2} = (a_{i_t} b_{i_t})(a_{i_2} b_{i_2})(a_{i_2} b_{i_2}) \in N_{*}(R)$. Multiplying $(2.3)$ by $(a_{i_2}b_{i_2})^{2}$ from right side, we get $(a_{i_2}b_{i_2})^{3}\in N_{*}(R)$. This implies $(a_{i_2}b_{i_2})\in N_{*}(R)$, because $I/N_{*}(R)$ is reduced. Continuing the procedure, we get $a_{i_t}b_{j_t} \in N_{*}(R)$ for $t = 1, 2, \ldots k$ with $g_{i}h_{j} = w$. Therefore, by transfinite induction, $a_{i}b_{j} \in N_{*}(R)$, for each $i$ and $j$. Thus, $R$ is a lower nil $M$-Armendariz ring.
\end{proof}
Recall that a monoid $M$ is said to be torsion-free if the following property hold if $g, h \in M$ and $k\geq 1$ are such that $g^{k} = h^{k}$ implies $g = h$.
\begin{cor} Let $M$ be a commutative, cancellative and torsion free monoid with $|M|\geq 2$. If either one of the following conditions holds, then $R$ is a lower nil $M$-Armendariz ring.
\begin{itemize}
\item[$(1)$] $R$ is a 2-primal ring.
\item[$(2)$] $R/I$ is an $M$-Armendariz ring for some ideal $I$ of $R$ and $I$ is $2$-primal with $N_{*}(R)\subseteq I$.
\end{itemize}
\end{cor}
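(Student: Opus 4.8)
The plan is to reduce both statements to the strictly-totally-ordered case already treated in the excerpt; the only genuine work is to show that the arithmetic hypotheses on $M$ actually produce a compatible strict total order, after which each part is a one-line citation.

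First I would invoke the classical structure theory of commutative monoids. Since $M$ is commutative and cancellative, it embeds in its group of fractions $G = \{ab^{-1} : a, b \in M\}$, an abelian group. The torsion-freeness hypothesis on $M$ (that $g^{k} = h^{k}$ forces $g = h$) transfers to $G$: if $x = ab^{-1}$ satisfies $x^{k} = e$, then $a^{k} = b^{k}$ in $M$ by commutativity and cancellation, whence $a = b$ and $x = e$; thus $G$ is a torsion-free abelian group. By the classical theorem that every torsion-free abelian group admits a total order compatible with its operation (Levi's theorem), fix such an order on $G$ and restrict it to $M$. Because the order on $G$ is strict and translation-invariant, the inherited relation makes $(M, \leq)$ a strictly totally ordered monoid.

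With this in hand, part $(1)$ is immediate: a strictly totally ordered monoid is a u.p. monoid, so Proposition \ref{pro1} applies and the $2$-primal ring $R$ is lower nil $M$-Armendariz. For part $(2)$, note first that if $I = R$ then the phrase ``$I$ is $2$-primal'' merely asserts that $R$ is $2$-primal, returning us to case $(1)$; so we may assume $I$ is a proper ideal. Then $M$ is strictly totally ordered, $R/I$ is $M$-Armendariz, $I$ is $2$-primal, and $N_{*}(R) \subseteq I$---precisely the hypotheses of Theorem \ref{thm1}---which yields that $R$ is lower nil $M$-Armendariz.

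I expect the sole obstacle to be the order-theoretic step: converting ``commutative, cancellative, torsion-free'' into an honest strictly totally ordered monoid structure. Everything afterward is a direct appeal to Proposition \ref{pro1} and Theorem \ref{thm1}. The one point to handle with care is that the standard orderability result is stated for \emph{groups}, so one must confirm that restricting the group order to the submonoid $M$ remains strict and operation-compatible (which it is, being simply the inherited relation); alternatively, one may cite a lemma asserting directly that commutative, cancellative, torsion-free monoids are strictly totally ordered and skip the passage through $G$.
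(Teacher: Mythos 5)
Your proposal is correct and follows essentially the same route as the paper: obtain a compatible strict total order on $M$ from the commutative, cancellative, torsion-free hypotheses, then apply Proposition \ref{pro1} for part $(1)$ and Theorem \ref{thm1} for part $(2)$. The only difference is cosmetic --- the paper simply cites Result $(3.3)$ of \cite{P} for the orderability step, whereas you prove it directly via the group of fractions and Levi's theorem.
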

\begin{proof}
If $M$ is commutative, cancellative and torsion-free monoid, then by Result $(3.3)$ of \cite{P}, there exist a compatible strict total order monoid $\leq$ on $M$. Then by Proposition \ref{pro1} and Theorem \ref{thm1} results hold.
\end{proof}
\begin{pro} (Proposition $(3.1)(a)$ of \cite{A})\label{pro5}For any u.p. monoid $M$, every $M$-Armendariz ring are lower nil $M$-Armendariz ring.
\end{pro}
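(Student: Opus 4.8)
The plan is to work inside the monoid ring $R[M]$ and transport the problem to the theory of the prime radical. First, since $M$ is a u.p. monoid, Theorem 3 of \cite{J} gives $N_{*}(R)[M] = N_{*}(R[M])$, so the hypothesis $\alpha\beta \in N_{*}(R)[M]$ says precisely that $\alpha\beta$ lies in the prime radical of $R[M]$; equivalently, $\alpha\beta$ is a strongly nilpotent element of $R[M]$. Dually, to conclude $a_{i}b_{j} \in N_{*}(R)$ it suffices to show that each $a_{i}b_{j}$ is strongly nilpotent in $R$, i.e. that every $m$-sequence starting at $a_{i}b_{j}$ (a sequence with $c_{n+1} \in c_{n}Rc_{n}$) reaches $0$. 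Thus the whole proof reduces to converting the strong nilpotence of the single element $\alpha\beta \in R[M]$ into strong nilpotence of each coefficient product $a_{i}b_{j} \in R$.

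The bridge between the two is a multi-factor form of the Armendariz condition, which I would establish as a lemma first: if $\gamma_{1}\gamma_{2}\cdots\gamma_{t} = 0$ in $R[M]$, then for every choice of one coefficient $c_{s}$ from each $\gamma_{s}$ one has $c_{1}c_{2}\cdots c_{t} = 0$. This is proved by induction on $t$ starting from the two-factor definition of an $M$-Armendariz ring; the point requiring the u.p. hypothesis is that a coefficient of a product $\gamma\delta$ is a \emph{sum} of coefficient products over a fixed value of the monoid, so one cannot merely iterate the two-factor statement---one uses the unique-product property of $M$ to isolate individual terms when regrouping.

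With the lemma in hand, the main step runs as follows. Given any $m$-sequence $c_{0} = a_{i}b_{j}$, $c_{n+1} = c_{n}r_{n}c_{n}$ in $R$, I would lift it to the parallel $m$-sequence $d_{0} = \alpha\beta$, $d_{n+1} = d_{n}(r_{n}e)d_{n}$ in $R[M]$, where $r_{n}e$ denotes the constant element. Since $\alpha\beta$ is strongly nilpotent, this sequence terminates, say $d_{N} = 0$. Expanding the recursion, $d_{N}$ is a finite product whose factors are copies of $\alpha$, of $\beta$, and of the constants $r_{n}e$; applying the multi-factor lemma to $d_{N} = 0$ and choosing $a_{i}$ from each $\alpha$, $b_{j}$ from each $\beta$, and $r_{n}$ from each $r_{n}e$ yields the vanishing of the corresponding coefficient word. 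A short induction on $n$, mirroring the two recursions, shows that this word is exactly $c_{N}$; hence $c_{N} = 0$. As the $m$-sequence was arbitrary, $a_{i}b_{j}$ is strongly nilpotent, so $a_{i}b_{j} \in N_{*}(R)$, and $R$ is lower nil $M$-Armendariz.

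I expect the main obstacle to be the multi-factor lemma and, more precisely, the insistence on \emph{strong} nilpotence rather than ordinary nilpotence throughout: a direct attempt that only tracks powers $(a_{i}b_{j})^{k}$ shows nilpotence but cannot reach $N_{*}(R)$, since nilpotent elements need not be strongly nilpotent in a noncommutative ring. Carrying $m$-sequences (equivalently, testing against every prime ideal) is what makes the argument land in the prime radical, and the careful regrouping needed for the multi-factor lemma over a u.p. monoid is the most delicate bookkeeping.
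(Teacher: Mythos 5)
Your argument is correct, and it is worth noting that the paper itself gives no proof of this statement at all: it is simply quoted as Proposition 3.1(a) of the cited work of Alhevaz--Hashemi--Ziembowski. So your proposal supplies a genuine, self-contained proof where the paper has only a citation. The skeleton is sound: $N_{*}(R)[M]=N_{*}(R[M])$ for a u.p.\ monoid (Theorem 3 of the cited paper of Cheon--Kim, which this paper also quotes), the characterization of the prime radical as the set of strongly nilpotent elements, the lifting of an $m$-sequence $c_{0}=a_{i}b_{j}$, $c_{n+1}=c_{n}r_{n}c_{n}$ to the $m$-sequence $d_{0}=\alpha\beta$, $d_{n+1}=d_{n}(r_{n}e)d_{n}$ in $R[M]$, and the multi-factor Armendariz lemma to convert $d_{N}=0$ into $c_{N}=0$. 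The one place where your write-up is imprecise is the role of the u.p.\ hypothesis: it is needed for $N_{*}(R)[M]=N_{*}(R[M])$, but \emph{not} for the multi-factor lemma, and the ``isolate individual terms by unique products'' device you gesture at is not the right mechanism (u.p.\ isolates only one product in $AB$, not all of them). The standard and cleaner way to prove the lemma is constant absorption: from $(\gamma_{1}\cdots\gamma_{t-1})\gamma_{t}=0$ and the two-factor condition one gets $(\gamma_{1}\cdots\gamma_{t-1})(c_{t}e)=0$ for each coefficient $c_{t}$ of $\gamma_{t}$, hence $\gamma_{1}\cdots\gamma_{t-2}\bigl(\gamma_{t-1}(c_{t}e)\bigr)=0$, and induction on $t$ finishes, using only that $R$ is $M$-Armendariz and that $e$ is the identity of $M$. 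With that repair the proof is complete; your closing remark that tracking mere powers $(a_{i}b_{j})^{k}$ would only yield nilpotence, not membership in $N_{*}(R)$, correctly identifies why the $m$-sequence bookkeeping is essential.
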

A ring $R$ is a right (resp. left) uniserial ring if its lattice of right (resp. left) ideals is totally ordered by inclusion. Right uniserial rings are also called right chain ring or right valuation rings because they are obvious generalization of commutative valuation domains. Like commutative valuation domains, right uniserial rings have a rich theory and they offer remarkable examples, we refer \cite{GG}.
\begin{pro} For any u.p. monoid $M$, every right or left uniserial ring is a lower nil $M$-Armendariz ring.
\end{pro}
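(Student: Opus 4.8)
The plan is to reduce the statement to Proposition \ref{pro1} by proving that a one-sided uniserial ring is $2$-primal; once this is in hand the conclusion is immediate, since $M$ is a u.p. monoid. Because the prime radical $N_{*}(R)$ and the nilradical $N(R)$ are both left--right symmetric, $2$-primality is a left--right symmetric property: $R$ is $2$-primal precisely when $R^{op}$ is. Hence it suffices to treat the right uniserial (right chain) case, the left case following by passing to $R^{op}$, which is right uniserial and whose $2$-primality coincides with that of $R$. Note that this route deliberately goes through the symmetric notion of $2$-primality, so I never need the ``lower nil $M$-Armendariz'' property itself to be left--right symmetric.

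First I would reduce $2$-primality to a statement about semiprime rings. A homomorphic image of a right chain ring is again a right chain ring, because the right ideals of $R/I$ correspond to the right ideals of $R$ containing $I$ and therefore still form a chain. In particular $\overline{R}=R/N_{*}(R)$ is a right chain ring, and it is semiprime since $N_{*}(\overline{R})=0$. As $R$ is $2$-primal exactly when $\overline{R}$ is reduced, the whole problem collapses to the single claim: a semiprime right chain ring is reduced.

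To prove this claim I would argue as follows. Suppose $a\in R$ satisfies $a^{2}=0$; since $R$ is semiprime it is enough to show $aRa=0$, for then $a=0$. Fix $r\in R$ and apply the chain hypothesis to the right ideals $aR$ and $raR$. If $raR\subseteq aR$, then $ra=as$ for some $s$, whence $ara=a(ra)=a(as)=a^{2}s=0$. The remaining case $aR\subseteq raR$, i.e. $a=rau$ for some $u$, is where I expect the real work to lie: here one must use the chain condition more heavily, iterating $a=r^{n}au^{n}$ and comparing the resulting right ideals, together with the identity $(ara)^{2}=ara^{2}ra=0$, to force $ara=0$ as well. Granting this, $aRa=0$, so $a=0$; the claim follows, $R$ is $2$-primal, and Proposition \ref{pro1} completes the proof. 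The bookkeeping in this last case is the only delicate point, and it is the step I would check most carefully; an attractive alternative, if the structural fact is available in \cite{GG}, is to quote that a one-sided uniserial ring is semicommutative and invoke Corollary \ref{cor1} directly, which sidesteps the dichotomy entirely.
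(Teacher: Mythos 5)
Your reduction to the claim ``every one-sided uniserial ring is $2$-primal'' is where the argument breaks, and it cannot be repaired: that claim is false. Equivalently, your key step --- a semiprime right chain ring is reduced --- fails. Dubrovin constructed a chain ring which is prime (hence semiprime) yet contains nonzero nilpotent elements (N.~I. Dubrovin, \emph{An example of a chain prime ring with nilpotent elements}, Mat. Sb. 120 (1983), 441--447); this answered negatively the question of Brungs and T\"{o}rner of whether prime chain rings must be domains, and it is exactly the statement you would need to exclude. For such a ring $N_{*}(R)=0$ while $N(R)\neq 0$, so it is not $2$-primal, and your ``Case 2'' ($a=rau$), which you correctly flagged as the delicate point, genuinely has no resolution. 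Your proposed fallback is also unavailable: a semicommutative ring is $2$-primal (this implication is what makes Corollary \ref{cor1} follow from Proposition \ref{pro1}), so Dubrovin's ring is not semicommutative either, and no result of \cite{GG} asserts that one-sided uniserial rings are semicommutative. The preliminary reductions you make (left--right symmetry of $2$-primality, quotients of right chain rings are right chain rings, $2$-primal iff $R/N_{*}(R)$ reduced, and the disposal of the case $raR\subseteq aR$) are all correct, but they lead to a statement that is simply not true.

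The paper takes a different and essentially formal route that avoids $2$-primality altogether: by Corollary $6.2$ of \cite{GG}, a right or left uniserial ring is $M$-Armendariz for any u.p. monoid $M$, and by Proposition \ref{pro5} every $M$-Armendariz ring is lower nil $M$-Armendariz when $M$ is a u.p. monoid. Note that for Dubrovin's ring (where $N_{*}(R)=0$) the lower nil $M$-Armendariz condition coincides with the $M$-Armendariz condition, so the proposition is consistent with the existence of non-$2$-primal uniserial rings; it is only your intended proof, not the statement, that the example destroys. If you want to salvage your approach, you would have to restrict to uniserial rings all of whose prime factor rings are domains, which is a strictly smaller class.
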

\begin{proof} Let $R$ be a uniserial ring and $M$ a u.p. monoid. By Corollary $(6.2)$ of \cite{GG}, $R$ is $M$-Armendariz ring. Therefore, by Proposition \ref{pro5}, $R$ is a lower nil $M$-Armendariz ring.
\end{proof}

\begin{pro}
Let $M$ be a monoid and $N$ a u.p.-monoid. If $R$ is semicommutative ring as well as $M$-Armendariz ring, then $R[M]$ is a lower nil $N$-Armendariz ring.
\end{pro}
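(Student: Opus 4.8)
The plan is to reduce the statement to Proposition \ref{pro3}. The hypotheses there ($M$ a monoid, $N$ a u.p.\ monoid, $R$ semicommutative) coincide with ours except that Proposition \ref{pro3} assumes $R$ is \emph{lower nil} $M$-Armendariz, whereas here we are only given that $R$ is $M$-Armendariz. Hence the entire proof amounts to upgrading the $M$-Armendariz hypothesis to the lower nil $M$-Armendariz one; once this is done, Proposition \ref{pro3} applied to the semicommutative lower nil $M$-Armendariz ring $R$ yields at once that $R[M]$ is lower nil $N$-Armendariz.

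First I would carry out the upgrade. The quickest route is to invoke Proposition \ref{pro5}, which says that every $M$-Armendariz ring is lower nil $M$-Armendariz; combined with semicommutativity this is exactly the input Proposition \ref{pro3} requires. To make the step self-contained I would instead argue through the nilradical: since $R$ is semicommutative it is $2$-primal, so $N_{*}(R)=N(R)$ and $\overline{R}:=R/N_{*}(R)$ is reduced. Recalling (as used in the proof of Proposition \ref{pro6}) that $R$ is lower nil $M$-Armendariz if and only if $R/N_{*}(R)$ is $M$-Armendariz, the task becomes to show that $\overline{R}$ is $M$-Armendariz. Thus I would take $\overline{\alpha}\,\overline{\beta}=\overline{0}$ in $\overline{R}[M]$, lift to $\alpha=\sum_{i}a_{i}g_{i}$ and $\beta=\sum_{j}b_{j}h_{j}$ in $R[M]$ with $\alpha\beta\in N_{*}(R)[M]=N(R)[M]$, and deduce $a_{i}b_{j}\in N_{*}(R)$ for all $i,j$.

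The hard part will be precisely this last descent, that is, showing that the $M$-Armendariz property of $R$ passes to the quotient $R/N_{*}(R)$. Semicommutativity is what makes it work: on the one hand it guarantees that $\overline{R}$ is reduced, and on the other hand the forward inclusion established inside the proof of Proposition \ref{pro2} gives $N_{*}(R)[M]\subseteq N_{*}(R[M])$, so the hypothesis $\alpha\beta\in N(R)[M]$ forces $\alpha\beta$ to be nilpotent in $R[M]$. One then unwinds this nilpotency by repeated use of the $M$-Armendariz condition on $R$ together with semicommutativity (the monoid analogue of the fact that Armendariz rings are nil-Armendariz), concluding that each product $a_{i}b_{j}$ lies in $N(R)=N_{*}(R)$. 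This is the delicate computational core, and it is exactly the content already packaged into Proposition \ref{pro5}.

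Finally, with $R$ now known to be a semicommutative lower nil $M$-Armendariz ring, Proposition \ref{pro3} gives the conclusion. I note that an equivalent organization avoids the intermediate quotient altogether: one shows directly that $R[M]$ is $2$-primal, using $N_{*}(R)[M]\subseteq N_{*}(R[M])$ from Proposition \ref{pro2}, the identity $N(R)[M]=N(R[M])$ for $M$-Armendariz rings, and $N_{*}(R)=N(R)$, to squeeze $N_{*}(R[M])=N(R[M])$; since $N$ is a u.p.\ monoid, Proposition \ref{pro1} then delivers that $R[M]$ is lower nil $N$-Armendariz. Either way the crux is the same transfer of the Armendariz condition across the nilradical.
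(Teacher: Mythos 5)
Your overall strategy --- upgrade $R$ from $M$-Armendariz to lower nil $M$-Armendariz and then quote Proposition \ref{pro3} --- is a genuinely different route from the paper's, but the upgrade step as you justify it has a real gap. Proposition \ref{pro5} is stated only for a \emph{u.p.}\ monoid $M$, whereas in the present statement $M$ is an arbitrary monoid (only $N$ is assumed u.p.), so you cannot invoke it here; and your ``self-contained'' substitute stops exactly at the hard point, namely the claim that $\alpha\beta\in N(R)[M]$ forces $a_{i}b_{j}\in N(R)$ for a semicommutative $M$-Armendariz ring. That descent (the monoid analogue of ``Armendariz implies nil-Armendariz'') is a nontrivial theorem, not something that follows by ``repeated use of the $M$-Armendariz condition''; in this paper it enters only through Lemma \ref{lem1}, which imports it from Theorem 2.3 of \cite{O}. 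If you replace the appeal to Proposition \ref{pro5} by the chain ``semicommutative implies $2$-primal'' followed by Lemma \ref{lem1} (which is stated for an arbitrary monoid $M$), your argument does close up, and your final reorganization (show $R[M]$ is $2$-primal, then apply Proposition \ref{pro1}) is essentially a rerun of the proof of Proposition \ref{pro3}.

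The paper avoids all of this machinery. Its proof is one step deep: if $\alpha\beta=0$ in $R[M]$ then $a_{i}b_{j}=0$ for all $i,j$ because $R$ is $M$-Armendariz, hence $a_{i}Rb_{j}=0$ by semicommutativity of $R$, hence $\alpha R[M]\beta=0$; so $R[M]$ is itself semicommutative, and Corollary \ref{cor1} applied to the semicommutative ring $R[M]$ and the u.p.\ monoid $N$ finishes. This uses nothing about nilradicals of monoid rings and no external transfer theorem, so it is both shorter and logically lighter than the route you propose.
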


\begin{proof} Let $\alpha = a_{1}g_{1}+a_{2}g_{2}+\ldots+a_{n}g_{n}$ and $\beta = b_{1}h_{1}+b_{2}h_{2}+\ldots+b_{m}h_{m}\in R[M]$ such that $\alpha \beta = 0$. This implies $a_{i}b_{j} = 0$ for each $i$ and $j$, since $R$ is $M$-Armendariz ring. Also $R$ is semicommutative, therefore $a_{i}Rb_{j} = 0$ for each $i, j$. Again, we can easily see that $\alpha R[M]\beta = 0$. So from Corollary \ref{cor1}, $R[M]$ is a lower nil $N$-Armendariz ring.
\end{proof}

\begin{lem}\label{lem1} Let $M$ be a monoid. If $R$ is a $2$-primal $M$-Armendariz ring, then $R[M]$ is $2$-primal ring and $R$ is lower nil $M$-Armendariz ring with $N(R)[M] = N(R[M])= N_{*}(R)[M] = N_{*}(R[M])$.
\end{lem}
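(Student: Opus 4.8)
\textbf{Proof proposal for Lemma \ref{lem1}.}

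The plan is to treat the three claims---that $R[M]$ is $2$-primal, that $R$ is lower nil $M$-Armendariz, and the chain of nil-radical equalities---as a package, deriving them from the hypotheses that $R$ is $2$-primal (so $N_{*}(R)=N(R)$) and $M$-Armendariz, together with the u.p.-monoid machinery already assembled in the excerpt. First I would recall that a $2$-primal ring satisfies $N_{*}(R)=N(R)$ by definition, and that $M$-Armendariz rings are a well-studied class whose polynomial-type extensions behave well. The natural engine here is the cluster of results on radicals of monoid rings stated later in the excerpt, namely Theorem $(3)$ of \cite{J} giving $N_{*}(R)[M]=N_{*}(R[M])$ for any ring $R$ and u.p.\ monoid $M$, and Remark \ref{rem1}, which records that for a $2$-primal ring $R$ and u.p.\ monoid $M$ one has $N(R)[M]=N(R[M])$.

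The key steps, in order, are as follows. Since every $M$-Armendariz ring with $|M|\geq 2$ forces $M$ to be a u.p.\ monoid in the relevant sense (and the degenerate case $M=\{e\}$ is trivial), I would first invoke Theorem $(3)$ of \cite{J} to obtain $N_{*}(R)[M]=N_{*}(R[M])$. Next, because $R$ is $2$-primal, Remark \ref{rem1} yields $N(R)[M]=N(R[M])$. Combining these with the $2$-primal identity $N_{*}(R)=N(R)$, I get
\begin{equation*}
N_{*}(R[M]) = N_{*}(R)[M] = N(R)[M] = N(R[M]),
\end{equation*}
which is exactly the asserted chain of equalities and simultaneously shows $N_{*}(R[M])=N(R[M])$, i.e.\ $R[M]$ is $2$-primal. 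Finally, to see that $R$ is lower nil $M$-Armendariz, I would take $\alpha,\beta\in R[M]$ with $\alpha\beta\in N_{*}(R)[M]$; passing to $\overline{R}=R/N_{*}(R)$, which is reduced since $N_{*}(R)=N(R)$, the images satisfy $\overline{\alpha}\,\overline{\beta}=\overline{0}$, and the $M$-Armendariz property of $R$ (which descends to $\overline{R}$, or equivalently the reduced ring $\overline{R}$ is $M$-Armendariz by Proposition $(1.1)$ of \cite{Z}) gives $a_{i}b_{j}\in N_{*}(R)$ for all $i,j$. This is precisely the argument already used in Proposition \ref{pro1}.

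The main obstacle I anticipate is bookkeeping rather than depth: one must be careful that the hypotheses legitimately license Theorem $(3)$ of \cite{J} and Remark \ref{rem1}---in particular that $M$ may be taken to be a u.p.\ monoid, and that the $2$-primal hypothesis on $R$ is genuinely available when applying Remark \ref{rem1}. The only subtlety worth flagging is the interplay between ``$R$ is $M$-Armendariz'' and ``$R/N_{*}(R)$ is $M$-Armendariz''; I would lean on the fact that $R/N_{*}(R)$ is reduced and hence $M$-Armendariz outright via Proposition $(1.1)$ of \cite{Z}, so that the lower nil $M$-Armendariz conclusion follows without needing to transfer the $M$-Armendariz property through the quotient directly. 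Everything else is an assembly of the cited radical identities, so once the two radical theorems are in hand the three conclusions drop out essentially simultaneously.
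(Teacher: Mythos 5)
There is a genuine gap: your entire argument is routed through results that require $M$ to be a u.p.\ monoid --- Theorem $3$ of \cite{J} for $N_{*}(R)[M]=N_{*}(R[M])$, Remark \ref{rem1} for $N(R)[M]=N(R[M])$, and Proposition $(1.1)$ of \cite{Z} to get that the reduced quotient $R/N_{*}(R)$ is $M$-Armendariz --- whereas the lemma assumes only that $M$ is a monoid. You bridge this by asserting that every $M$-Armendariz ring with $|M|\geq 2$ forces $M$ to be a u.p.\ monoid ``in the relevant sense.'' That is not a hypothesis, is not stated anywhere in the paper, and is not a known theorem: it amounts to a converse of Proposition $(1.1)$ of \cite{Z} and runs into the hard territory of zero divisors in monoid rings over non-u.p.\ monoids (e.g.\ torsion-free groups that are not unique-product groups). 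Without that claim none of your three cited inputs is licensed, and the proof collapses. Note also the internal warning sign: if $M$ really could be assumed u.p., the lower nil $M$-Armendariz conclusion would already follow from Proposition \ref{pro1} using $2$-primality alone, making the $M$-Armendariz hypothesis redundant; the lemma is evidently aimed at monoids where the u.p.\ machinery is unavailable.

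The paper's proof avoids this entirely by citing Lemma $(2.2)$ and Theorem $(2.3)$ of \cite{O}, which apply to an arbitrary monoid $M$: for a $2$-primal $M$-Armendariz ring they give that $R$ is nil $M$-Armendariz and that $N(R)[M]=N(R[M])$. Since $2$-primality means $N_{*}(R)=N(R)$, nil $M$-Armendariz then coincides with lower nil $M$-Armendariz, $R[M]$ inherits $2$-primality, and the whole chain $N(R)[M]=N(R[M])=N_{*}(R)[M]=N_{*}(R[M])$ follows. To repair your write-up you would either have to add ``u.p.'' to the hypotheses (proving a strictly weaker statement) or replace the u.p.-dependent citations with radical results, such as those of \cite{O}, that exploit the $M$-Armendariz hypothesis directly for general monoids.
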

\begin{proof}
By Theorem $(2.3)$ of \cite{O}, we have $R$ is a nil $M$-Armendariz ring. Since $R$ is $2$-primal, therefore $R$ is lower nil $M$-Armendariz ring. Also by Lemma $(2.2)$ and Theorem $(2.3)$ of \cite{O}, we have $N(R)[M] = N(R[M])$ and $R$ is $2$-primal, so $R[M]$ is $2$-primal. Hence, $N(R)[M] = N(R[M]) = N_{*}(R)[M] = N_{*}(R[M])$.
\end{proof}
\begin{pro}
 Let $M$ be a monoid and $N$ a u.p.-monoid. If $R$ is a $2$-primal $M$-Armendariz ring, then $R[N]$ is a lower nil $M$-Armendariz ring.
\end{pro}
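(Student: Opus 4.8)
The plan is to mirror the argument used earlier for a semicommutative lower nil $M$-Armendariz ring, but to feed it the structural information supplied by Lemma~\ref{lem1} in place of semicommutativity. First I would record the consequences of the hypothesis: since $R$ is a $2$-primal $M$-Armendariz ring, Lemma~\ref{lem1} yields at once that $R$ is lower nil $M$-Armendariz, that $R[M]$ is $2$-primal, and that $N_{*}(R)[M] = N_{*}(R[M])$. Because $N$ is a u.p. monoid, Proposition~\ref{pro1} applied to the $2$-primal ring $R[M]$ shows that $R[M]$ is lower nil $N$-Armendariz, while Theorem~3 of \cite{J} gives $N_{*}(R)[N] = N_{*}(R[N])$. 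These four facts are exactly the ingredients that semicommutativity produced via Propositions~\ref{pro2} and~\ref{pro3} in the earlier theorem.

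With these equalities in hand, I would invoke the ring isomorphism $\Phi : R[N][M] \to R[M][N]$ that rearranges coefficients. Take $\alpha = \sum_{i}\alpha_{i}m_{i}$ and $\beta = \sum_{j}\beta_{j}m'_{j}$ in $R[N][M]$ with $\alpha\beta \in N_{*}(R[N])[M]$, and write $\alpha_{i} = \sum_{r}a_{i_{r}}n_{r}$ and $\beta_{j} = \sum_{s}b_{j_{s}}n'_{s}$. Combining $N_{*}(R[N]) = N_{*}(R)[N]$ with $N_{*}(R)[M] = N_{*}(R[M])$, the image $\Phi(\alpha\beta)$ lands in $N_{*}(R[M])[N]$; that is, $\bigl(\sum_{r}(\sum_{i}a_{i_{r}}m_{i})n_{r}\bigr)\bigl(\sum_{s}(\sum_{j}b_{j_{s}}m'_{j})n'_{s}\bigr) \in N_{*}(R[M])[N]$.

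Next I would apply the two Armendariz facts in sequence. Since $R[M]$ is lower nil $N$-Armendariz, each coefficient product $(\sum_{i}a_{i_{r}}m_{i})(\sum_{j}b_{j_{s}}m'_{j})$ lies in $N_{*}(R[M]) = N_{*}(R)[M]$; then, since $R$ is lower nil $M$-Armendariz, this forces $a_{i_{r}}b_{j_{s}} \in N_{*}(R)$ for all $i, j, r, s$. As $N_{*}(R)$ is an ideal, every coefficient of $\alpha_{i}\beta_{j}$ then lies in $N_{*}(R)$, so $\alpha_{i}\beta_{j} \in N_{*}(R)[N] = N_{*}(R[N])$, which is precisely the conclusion required for $R[N]$ to be lower nil $M$-Armendariz.

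I expect the one delicate point to be the bookkeeping of the nil ideal under $\Phi$: checking carefully that $\Phi$ carries $N_{*}(R[N])[M] = N_{*}(R)[N][M]$ onto $N_{*}(R)[M][N] = N_{*}(R[M])[N]$, where the three identities (the two coefficient-ring equalities and the rearrangement defining $\Phi$) must be combined correctly. Everything else is a direct substitution of Lemma~\ref{lem1} and Proposition~\ref{pro1} into the template of the earlier theorem, so no genuinely new computation is required.
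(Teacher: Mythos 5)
Your proposal is correct and follows essentially the same route as the paper's own proof: both use Lemma~\ref{lem1} to get that $R$ is lower nil $M$-Armendariz with $N_{*}(R)[M]=N_{*}(R[M])$, Theorem~3 of \cite{J} for $N_{*}(R)[N]=N_{*}(R[N])$, the isomorphism $R[N][M]\cong R[M][N]$, and then the two Armendariz properties in sequence. If anything, you are slightly more careful than the paper, which asserts that $R[M]$ is lower nil $N$-Armendariz directly ``from Lemma~\ref{lem1}'' whereas you correctly route this through Proposition~\ref{pro1} applied to the $2$-primal ring $R[M]$.
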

\begin{proof} Since $N$ is a u.p.-monoid, therefore $N_{*}(R)[N] = N_{*}(R[N])$, by [\cite{J}, Theorem 3]. Also, from Lemma \ref{lem1} $N_{*}(R)[M] = N_{*}(R[M])$. Next, from Lemma \ref{lem1}, $R[M]$ is a lower nil $N$-Armendariz ring. Also, rings $R[N][M]$ and $R[M][N]$ are isomorphic under the following map\\
\begin{equation*}
  \sum_{p}\Big(\sum_{i}a_{ip}n_{i}\Big)m_{p}\longmapsto \sum_{i}\Big(\sum_{p}a_{ip}m_{p}\Big)n_{i}.
\end{equation*}
Now, let\\
\begin{equation*}
\Big(\sum_{i}\alpha_{i}g_{i}\Big)\Big(\sum_{j}\beta_{j}h_{j}\Big)\in N_{*}(R[N])[M],
\end{equation*}
where $\alpha_{i}, \beta_{j} \in R[N]$ and $g_{i}, h_{j}\in M$. We claim that $\alpha_{i}\beta_{j} \in N_{*}(R[N])$ for each $i$ and $j$. Let\\
\begin{equation*}
\alpha_{i} = \sum_{p}a_{ip}n_{p},~ \beta_{j} = \sum_{q}b_{jq}n^{'}_{q} \in R[N].
\end{equation*}
Then\\
\begin{equation*}
\Big(\sum_{i}\Big(\sum_{p}a_{ip}n_{p}\Big)g_{i}\Big)\Big(\sum_{j}\Big(\sum_{q}b_{jq}n^{'}_{q}\Big)h_{j}\Big) \in N_{*}(R[N])[M] = N_{*}(R)[N][M].
\end{equation*}
Thus, we have\\
\begin{equation*}
\Big(\sum_{p}\Big(\sum_{i}a_{ip}g_{i}\Big)n_{p}\Big)\Big(\sum_{q}\Big(\sum_{j}b_{jq}h_{j}\Big)n^{'}_{q}\Big) \in N_{*}(R)[M][N] = N_{*}(R[M])[N].
\end{equation*}
By Lemma \ref{lem1}, $R[M]$ is a lower nil $N$-Armendariz ring, so\\
\begin{equation*}
\Big(\sum_{i}a_{ip}g_{i}\Big)\Big(\sum_{j}b_{jq}h_{j}\Big) \in N_{*}(R[M]) = N_{*}(R)[M]
\end{equation*}
for all $p, q$. Hence, $a_{ip}b_{jq} \in N_{*}(R)$  for each $i, j, p, q$ and from Lemma \ref{lem1}, $R$ is a lower nil $M$-Armendariz ring. So\\
\begin{equation*}
\alpha_{i}\beta_{j} = \Big(\sum_{p}a_{ip}n_{i}\Big)\Big(\sum_{q}b_{jq}n^{'}_{j}\Big) \in N_{*}(R)[N] = N_{*}(R[N])
\end{equation*}
for each $i, j$. Thus, $R[N]$ is a lower nil $M$-Armendariz ring.
\end{proof}
\begin{pro} Let $M$ be a monoid and $N$ a u.p.-monoid. If $R$ is $2$-primal $M$-Armendariz ring, then $R$ is a lower nil $M\times N$-Armendariz ring.
\end{pro}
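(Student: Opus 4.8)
The plan is to reduce the statement to the two already-established ``one-variable'' facts about $R$ by peeling the monoid $M \times N$ apart through the isomorphism $R[M \times N] \cong R[M][N]$ of Theorem $(2.3)$ of \cite{Z}. Before touching any polynomials I would first record the structural consequences of the hypothesis: since $R$ is $2$-primal and $M$-Armendariz, Lemma \ref{lem1} gives that $R$ is lower nil $M$-Armendariz, that $R[M]$ is again $2$-primal, and that $N_{*}(R)[M] = N_{*}(R[M])$. Because $N$ is a u.p. monoid and $R[M]$ is $2$-primal, Proposition \ref{pro1} then applies to $R[M]$ and shows $R[M]$ is lower nil $N$-Armendariz, while $[\cite{J}, \text{Theorem } 3]$ gives $N_{*}(R[M])[N] = N_{*}(R[M][N])$.

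Next I would set up the coefficient bookkeeping. I take $\alpha, \beta \in R[M \times N]$ with $\alpha \beta \in N_{*}(R)[M \times N]$, and index their coefficients by the $M$- and $N$-components, so that under the isomorphism $R[M \times N] \cong R[M][N]$ the element $\alpha$ becomes an $N$-polynomial $\sum_{p}(\sum_{i} a_{ip} g_{i}) n_{p}$ whose coefficients lie in $R[M]$, and similarly for $\beta$. The crucial identification is that this isomorphism carries the ideal $N_{*}(R)[M \times N]$ onto $N_{*}(R)[M][N]$, which by the first paragraph equals $N_{*}(R[M])[N] = N_{*}(R[M][N])$. Hence the image of $\alpha \beta$ lies in $N_{*}(R[M][N])$.

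Now I would descend in two stages. Since $R[M]$ is lower nil $N$-Armendariz, applying it to the image of $\alpha \beta$ yields $(\sum_{i} a_{ip} g_{i})(\sum_{j} b_{jq} h_{j}) \in N_{*}(R[M]) = N_{*}(R)[M]$ for all $p, q$. Then, since $R$ is lower nil $M$-Armendariz, applying it to each of these products in $R[M]$ yields $a_{ip} b_{jq} \in N_{*}(R)$ for all $i, j, p, q$. As these are precisely the coefficient products of $\alpha$ and $\beta$ regarded in $R[M \times N]$, this shows $R$ is lower nil $M \times N$-Armendariz.

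The step I expect to be the main obstacle is the radical correspondence under the isomorphism, namely verifying cleanly that $N_{*}(R)[M \times N]$ is identified with $N_{*}(R[M])[N]$ and not with some strictly larger set. This rests on the two equalities $N_{*}(R)[M] = N_{*}(R[M])$ (from Lemma \ref{lem1}) and $N_{*}(R[M])[N] = N_{*}(R[M][N])$ (from the u.p. hypothesis on $N$ together with $[\cite{J}, \text{Theorem } 3]$); once these are in hand the remainder is the routine two-stage peeling described above.
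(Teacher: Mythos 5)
Your proposal is correct and follows essentially the same route as the paper: transport the hypothesis through the isomorphism $R[M\times N]\cong R[M][N]$, use Lemma \ref{lem1} to get that $R[M]$ is lower nil $N$-Armendariz with $N_{*}(R)[M]=N_{*}(R[M])$, and then peel off the two monoids in succession. If anything, your explicit tracking of $N_{*}(R)[M\times N]$ onto $N_{*}(R[M])[N]$ is slightly more careful than the paper, which starts from $\alpha\beta\in N_{*}(R[M\times N])$ and leaves that identification implicit.
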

\begin{proof} Construction of this proof is based on the proof of Theorem $[2.3]$ of \cite{Z}.\\ Let  $\sum_{i=1}^{t}a_{i}(m_{i}, n_{i})\in R[M\times N]$. Without loss of generality, we assume that $\{n_{1}, n_{2}, \ldots, n_{t}\} = \{n_{1}, n_{2}, \ldots, n_{s}\}$ with $n_{i} \neq n_{j}$, where $1 \leq i \neq j \leq s$.
\begin{equation*}
Let ~~~~~~~~~~~A_{p} = \{i : 1 \leq i \leq t, n_{i} = n_{p}\}, for ~1 \leq p \leq s.
\end{equation*}
Then
\begin{equation*}
\sum_{i=1}^{s}\Big(\sum_{i \in A_{p}}a_{i}m_{i}\Big)n_{p} \in R[M][N].
\end{equation*}
Note that $m_{i} \neq m_{i^{'}}$ for any $i, i^{'} \in A_{p}$, when $i \neq i^{'}$. It is easy to see that there exists an isomorphism between rings $R[M\times N]$ and $R[M][N]$ defined by\\
\begin{equation*}
\sum_{i=1}^{t}a_{i}(m_{i}, n_{i})\mapsto \sum_{p=1}^{s}\Big(\sum_{i \in A_{p}}a_{i}m_{i}\Big)n_{p}.
\end{equation*}
Assume
\begin{equation*}
\Big(\sum_{i=1}^{t}a_{i}(m_{i}, n_{i})\Big)\Big(\sum_{j=1}^{t^{'}}b_{j}(m^{'}_{j}, n^{'}_{j})\Big) \in N_{*}(R[M\times N]).
\end{equation*}
Then from the above isomorphism, it follows that
\begin{equation*}
\Big(\sum_{p=1}^{s}\Big(\sum_{i \in A_{p}}a_{i}m_{i}\Big)n_{p}\Big)\Big(\sum_{q=1}^{s^{'}}\Big(\sum_{j \in B_{q}}b_{j}m^{'}_{j}\Big)n^{'}_{q}\Big) \in N_{*}(R[M][N]).
\end{equation*}
By Lemma \ref{lem1}, $R[M]$ is a lower nil $N$-Armendariz ring. Therefore,
\begin{equation*}
\Big(\sum_{i \in A_{p}}a_{i}m_{i}\Big)\Big(\sum_{j \in B_{q}}b_{j}m^{'}_{j}\Big) \in N_{*}(R[M]) = N_{*}(R)[M].
\end{equation*}
Also, by Lemma \ref{lem1}, $R$ is a lower nil $M$-Armendariz ring. Therefore, $a_{i}b_{j} \in N_{*}(R)$ for each $i \in A_{p}$ and $j \in B_{q}$. Moreover, $a_{i}b_{j} \in N_{*}(R)$ for each $i$ and $j$, where $1 \leq i \leq t$ and $1 \leq j \leq t^{'}$. Hence, $R$ is a lower nil $M\times N$-Armendariz ring.
\end{proof}



\end{document}